\newcommand{\BBr}{{\mathscr B}}
\newcommand{\MMr}{{\mathscr M}}
\newcommand{\RRr}{{\mathscr R}}
\newcommand{\SSr}{{\mathscr S}}
\newcommand{\fch}{{\mathbf{1}}}
\newcommand{\BB}{{\mathcal  B}}
\newcommand{\DD}{{\mathcal  D}}
\newcommand{\EE}{{\mathcal  E}}
\newcommand{\FF}{{\mathcal  F}}
\newcommand{\GG}{{\mathcal  G}}
\newcommand{\TT}{{\mathcal  T}}
\newcommand{\KK}{{\mathcal  K}}
\newcommand{\OO}{{\mathcal  O}}
\newcommand{\PP}{{\mathcal  P}}
\newcommand{\BE}{{\mathbb E}}
\newcommand{\BM}{{\mathbb M}}
\newcommand{\BR}{{\mathbb R}}
\def\XXint#1#2#3{{\setbox0=\hbox{$#1{#2#3}{\int}$ }
\vcenter{\hbox{$#2#3$ }}\kern-.6\wd0}}
\newtheorem{theorem}{\bf Theorem}[section]
\newtheorem{proposition}[theorem]{\bf Proposition}
\newtheorem{lemma}[theorem]{\bf Lemma}
\newtheorem{corollary}[theorem]{\bf Corollary}
\theoremstyle{definition}
\newtheorem{definition}[theorem]{Definition}
\newtheorem{example}[theorem]{\bf Example}
\newtheorem{remark}[theorem]{Remark}
\numberwithin{equation}{section}
\begin{document}

\title[Poisson equation with measure data]{Poisson equation
with measure data, reconstruction formula and Doob classes of processes}

\maketitle
\begin{center}

\normalsize
TOMASZ KLIMSIAK\textsuperscript{1,2}
ANDRZEJ ROZKOSZ\textsuperscript{2}
\par \bigskip

\textsuperscript{1} {\small Institute of Mathematics, Polish Academy of Sciences,\\
\'{S}niadeckich 8,   00-656 Warsaw, Poland} \par \medskip

 \textsuperscript{2} {\small Faculty of
Mathematics and Computer Science, Nicolaus Copernicus University,\\
Chopina 12/18, 87-100 Toru\'n, Poland }\par

\footnote{e-mail: {\tt tomas@mat.umk.pl} (T. Klimsiak), {\tt rozkosz@mat.umk.pl} (A. Rozkosz)}
\end{center}

\begin{abstract}
We consider the Dirichlet problem for equation
involving a general operator associated with a symmetric transient regular Dirichlet form and
bounded Borel measure on the right-hand side of the equation.
We introduce a  new function space (depending on the form) which allows us to distinguish between solutions with diffuse measure and with general Borel measure.
This  new space  can by characterized analytically in terms of the Poisson kernel associated with the underlying  operator or probabilistically by using the notion of Doob class (D) of processes naturally associated with the operator. We also prove a reconstruction formula describing, in terms of the carr\'e du champ operator
and jump measure associated with the underlying form,  the behaviour of the solution on the set where it is very large.
\end{abstract}

{\sc MSC-Classification}: 35R06, 35R11, 60H30.

{\sc Keywords}: Poison equation, measure data, reconstruction  formula.
\maketitle

\section{Introduction}
\label{sec1}

Let $D\subset\BR^d$, $d\ge2$, be a bounded regular domain and $\mu$ belongs to the set $\MMr_b(D)$ of all signed Borel measures on $D$ having finite total variation. It is well known that then there exists a unique weak solution $u$  to the Dirichlet problem
\begin{equation}
\label{eq1.1}
-\Delta u=\mu\quad\mbox{in }D,\quad u|_{\partial D}=0,
\end{equation}
i.e. $u\in L^1(D)$ and  for any $\eta\in \mathcal C:=\{u\in C^2(\bar D): u=0 \text{ on } \partial D\}$ we have
\[
-\int_Du\Delta\eta=\int_D\eta\,d\mu.
\]
In fact, $u$ is given by the formula
\begin{equation}
\label{eq1.2}
u(x)=\int_DG_D(x,y)\,\mu(dy),\quad x\in D,
\end{equation}
where $G_D$ is the Green kernel of the operator $-\Delta$ in $D$
(see, e.g., \cite{MV} and Section \ref{sec7} for more details). It is also known that  $u$ defined by  (\ref{eq1.2}) is quasi-continuous
(with respect to the Newtonian capacity) and  $u\in W^{1,q}_0(D)$ with $q\in[1,d/(d-1))$.
Suppose now that $\mu$ belongs to the set $\MMr_{0,b}(D)$ of diffuse measures,
i.e. $\mu\in\MMr_b(D)$ and $\mu$ charges no set of Newtonian capacity zero,
for instance $\mu(dx)=f(x)\,dx$ with $f\in L^1(D)$ or $\mu\ll \mathcal H^{\beta}$, where $\mathcal H^{\beta}$
is the $\beta$-dimensional Hausdorff measure on $D$ with $\beta\in (d-2,d)$.
Intuitively, in that case $u$ should have better regularity properties than in the case of general bounded Borel measure.
The question  arises whether this is true and one can find some regularity property   which allows one to distinguish between solutions of (\ref{eq1.1}) with $\mu\in\MMr_b(D)$ and  with $\mu\in\MMr_{0,b}(D)$. Known to us estimates for $u$,  like the aforementioned estimate in the Sobolev space $W^{1,q}_0(D)$,  depend only on the total variation of $\mu$.
This suggests that the solution of the problem should make use of some finer properties of $u$ defined by (\ref{eq1.2}).
In the present paper, we treat  this problem by using results from the probabilistic potential theory.

Let $\mathcal O$ denote the family of all open subsets of $D$. For   $V\in\mathcal O$ we denote by  $(P_V(x,dy))_{x\in V}$  the Poisson kernel of $-\Delta$, i.e.
the kernel such that  $P_V(x,dy)$ is a probability measure on $\partial V$ for any $x\in V$ and   for any $\varphi\in C_b(\partial D)$ the function
\[
P_V\varphi(x)= \int_{\partial V}\varphi(y)\, P_V(x,dy), \quad x\in D,
\]
is the unique solution to the Dirichlet problem
\[
-\Delta u=0\quad \text{in}\quad V,\qquad u=\varphi \quad\text{on}\quad \partial V.
\]
 Let FVP  denote the set of all increasing convex functions $\varphi:\BR^+\rightarrow\BR^+$
 with $\varphi(0)=0$ such that $\lim_{x\rightarrow\infty}\varphi(x)/x=\infty$. Elements of FVP
 will be called de la Vall\'ee--Poussin functions. We introduce the space   $\DD^{1,c}$ (see Theorem \ref{th4.5}) that consists of
 quasi-continuous functions  $u\in L^1(D)$  satisfying
\begin{equation}
\label{eq1.3}
\sup_{V\in\mathcal O} \|P_V\varphi(|u|)\|_{L^1(D)}<\infty\quad\mbox{for some }\varphi\in\mbox{FVP}.
\end{equation}
It appears (Proposition \ref{prop4.11}, Theorem \ref{th4.5}) that $\DD^{1,c}$ with the norm
\[
\|u\|_{\DD^1(D)}= \sup_{V\in\mathcal O}\int_D P_V(|u|)(x)\,dx
\]
is a separable Banach space. Our first main result (Theorem \ref{th4.5}, Theorem \ref{th5.8}) asserts that
\begin{equation}
\label{state.main}
\mu\quad \mbox{is diffuse if and only if}\quad u\in\DD^{1,c}(D).
\end{equation}
In order to present our second main result, let us recall a reconstruction formula
that follows from \cite[Theorem 2.33]{DMOP}:
\begin{equation}
\label{eq1.7}
\lim_{n\rightarrow\infty}\frac{1}{n}\int_{\{n\le u\le 2n\}}\eta\nabla u\cdot\nabla u\,dx=\int_D\eta\,d\mu_c^+,\quad \eta\in C_c(D),
\end{equation}
where $\mu_c$ is the concentrated part of $\mu$ (i.e. $\mu_c$ is orthogonal to the Newtonian capacity).
It provides an  information on $\mu_c$  based on the behaviour of the energy of $u$ on the set where $u$ is very large. The second goal of the paper is to establish a form of the reconstruction formula that
is somehow compatible  with the space $\DD^{1,c}(D)$ and at the same time  is suitable for generalizations  to the wide class of self-adjoint operators that generate  Markov semigroups (e.g.  fractional Laplacian). We show the following reconstruction formula (Theorem \ref{th5.8}): for any weight $\rho$ on $D$, i.e. a strictly positive function with $\|\rho\|_{L^1(D)}=1$,  we have
\begin{equation}
\label{eq1.4}
\lim_{n\rightarrow\infty}\sup_{V\in\mathcal O}\int_D \rho(x) P_V(|u|-n)^+(x)\,dx
=\int_D G_D\rho(x)\,|\mu_c|(dx).
\end{equation}
The above formula may be read as follows:
(a) If $\mu_c$ is non-zero, then for any $n\ge 1$ and  weight $\rho$ on $D$
there exists an open set $V\subset D$ and a  harmonic function $h^n_V$ on $V$ with
the prescribed boundary data
$(|u|-n)^+$ such that $\|h^n_V\|_{L^1_\rho(V)}$ is close to $\|G_D|\mu_c|\|_{L^1_\rho(D)}$, (b)
On the other hand, if $\mu_c=0$,  then for any  weight $\rho$,
 $\sup_{V\in\mathcal O}\|h^n_V\|_{L^1_\rho(V)}$ is eventually small.

For a better understanding of the space $\mathcal D^{1,c}(D)$, we provide
a couple of results that   characterize it.
First, we observe that \eqref{eq1.4} together with \eqref{state.main} imply that  for a quasi-continuous function $u$ on $D$ we have
 \begin{equation}
 \label{eq.equiv1}
 u\in\mathcal D^{1,c}(D)\quad \Leftrightarrow \quad \lim_{n\rightarrow\infty}\sup_{V\in\mathcal O} \|P_V(|u|-n)^+\|_{L^1(D)}=0.
 \end{equation}
In fact, in the paper we adopt  \eqref{eq.equiv1} as the  basic definition of $\DD^{1,c}(D)$
(see the beginning of Section \ref{sec4})  and then  we prove in Theorem \ref{th4.5}
that \eqref{eq1.3} is an equivalent formulation.  Interestingly,   in the definition of the norm $\|\cdot\|_{\mathcal D^1}$  the supremum
sign can be moved, preserving equality,   under the integral sign (Proposition  \ref{prop4.1}), i.e. for any  $u\in \DD^{1,c}(D)$,
\[
\|u\|_{\DD^1(D)}=\|\sup_{V\in\OO}P_V(|u|)\|_{L^1(D)},
\]
which implies that for a quasi-continuous function $u$ on $D$,
 \begin{equation}
 \label{eq.equiv12}
 u\in\mathcal D^{1,c}(D)\quad \Leftrightarrow \quad \lim_{n\rightarrow\infty} \|\sup_{V\in\mathcal O}P_V(|u|-n)^+\|_{L^1(D)}=0.
 \end{equation}
The function $e_{|u|}:=\sup_{V\in\OO}P_V(|u|)$ may be regarded as a generalized solution
to the obstacle problem for \eqref{eq1.1} with the barrier $|u|$ (its measurability (nearly Borel) is a consequence of \cite[Theorem V.1.17]{BG}).
This puts a different perspective on the space  $\DD^{1,c}(D)$.  It permits us to view it  as the class of quasi continuous functions
$u\in L^1(D)$ with the property that the $L^1$-norm of generalized solutions of the obstacle problem for \eqref{eq1.1}
with barriers $(|u|-n)^+$ tends to zero when $n\to \infty$.

Our third main result concerns  a probabilistic characterization of the space  $\DD^{1,c}(D)$
(Theorem \ref{th4.3}). This is a crucial point of the paper since our basic tools come from the probabilistic potential theory.
Let $(B_t)$ be a standard Brownian motion on a probability space $(\Omega,\FF,P)$.
Recall that the process $t\mapsto u(x+B_t)$ is called to be  of Doob class (D)   if the family of random variables
\[
\{u(x+B_{\tau}):\, \tau \text{ is a stopping time and } \tau\le\tau^x_D\},
\]
where $\tau^x_D=\inf\{t>0: x+B_t\notin D\}$,
is uniformly integrable under $P$.
The aforementioned characterization  is the following:
\[
u\in\DD^{1,c}(D)\quad\Leftrightarrow\quad t\mapsto u(x+B_t)\mbox{ is continuous of class (D)}\mbox{ for q.e. }x\in D
\]
(Here q.e. means {\em quasi everywhere}, i.e. except of a set  of the Newtonian capacity zero).
It explains and justifies the phrase ``Doob classes of processes"  in the title of the paper.

Above we have presented our main results for the Laplace operator, but in
 fact we prove them in a much more general setting. We consider the problem
\begin{equation}
\label{eq1.5}
-Lu=\mu\quad\mbox{in }D,\qquad u=0\quad\mbox{on }D^c:=E\setminus D,
\end{equation}
where $L$ is the operator corresponding to a symmetric,  transient and regular Dirichlet form
$(\EE,\mathfrak D(\EE))$ on $L^2(E;m)$ satisfying the absolute continuity condition (see Section \ref{sec2}). For instance, as $L$ we can take
a divergence form operator
\begin{equation}
\label{eq1.2qqq}
L=\sum^d_{i,j=1}\partial_{x_i}(a_{ij}(x)\partial_{x_j}),
\end{equation}
where the coefficients $a_{ij}\in\mathcal B(D)$ are bounded, the matrix $a:=[a_{ij}]$
is nonnegative definite a.e., and
$a$ is a.e. invertible  with $a^{-1}\in L^1_{loc}(D)$.
A fundamental class  of purely  nonlocal operators  consists of  L\'evy operators
\begin{equation}
\label{eq1.3qqq}
Lu(x)=  \text{p.v.}\int_{\mathbb R^d}(u(x+y)-u(x))\,\nu(dy),
\end{equation}
where $\nu$ is a symmetric  L\'evy measure: $\nu(dx)=\nu(-dx)$ and $\int_{\mathbb R^d}\min{\{1,|y|^2\}}\,\nu(dy)<\infty$.
In this case the absolute continuity condition holds provided that
the Hartman--Wintner condition holds for the Fourier symbol $\psi$ of $L$:
\[
\frac{\psi(\xi)}{\log(1+|\xi|)}\to \infty,\quad |\xi|\to \infty.
\]
Under the absolute continuity condition the resolvent associated with $L$ is determined by a density $r^D(x,y)$ ($r^D=G_D$ in case $L=\Delta$)
and one can define a  solution $u$ of (\ref{eq1.5}) by putting
\[
u(x)=R^D\mu(x),\quad\mbox{where}\quad R^D\mu(x)=\int_Dr^D(x,y)\,\mu(dy)
\]
for quasi-every (q.e.) $x\in D$ and $u=0$ on $D^c$. Roughly speaking, we prove that
then the main results stated above  for equation (\ref{eq1.1})
also hold for (\ref{eq1.5}) but with the Poisson kernel of $-\Delta$ replaced by the Poisson kernel of $-L$, $G_D$
replaced by $r^D$, Brownian motion $(B_t)$ replaced by a suitable Hunt process $(X,P_x)$ corresponding to
$L$ and, in general, the family $\mathcal O$ replaced by the broader family  $\mathcal O_q$ of all quasi open subsets
of $D$ (although we prove that if $R^D_1(C_b(D))\subset C_b(D)$, then as in the case of Laplace operator,
in the formulation of the results we can restrict ourselves to the family $\OO$; see Proposition \ref{prop4.8}).
The advantage of the potential theory approach to \eqref{eq1.5} is that we can treat in a concise way
a wide class of operators of different character. Note that in our setting   the   Poisson measures are  in general  supported by $E\setminus D$.

Our last main result is a generalization of the reconstruction formula \eqref{eq1.7} to the
class of operators  considered in the paper. Recall that the energy form $\EE$ may be represented as the integral
of the so-called {\em carr\'e du champ operator} $\Gamma$:
\[
\EE(u,u)=\frac12\int_{E} d\Gamma(u,u),\quad u\in\mathfrak D(\EE).
\]
The operator $\Gamma$ can be decomposed into the local part $\Gamma_c$  and the jump part $\Gamma_j$
(see Section \ref{sec6}). The jump part admits the form
\[
\Gamma_j(u,u)(dx)= 2\int_{\mathbb R^d} |u(x)-u(y)|^2\,J(dx,dy),
\]
where $J$ is the jump measure
coming from the Beurling--Deny decomposition of the form $\EE$.
Both operators $\Gamma_c$ and $\Gamma_j$ are well defined on the domain $\mathfrak D(\EE)$.
We show (Theorem \ref{th6.2}) that $\Gamma_c$ may be naturally  extended to solutions of \eqref{eq1.5}, and for any $\eta\in C_c(D)$ we have
 \begin{align}
\label{eq1.12}
\frac1{2n}\Big[\int_{\{n\le u\le 2n\}}\eta\,d\Gamma_c(u,u)&+ \int_D\int_D \eta(x)\theta_n(u(x),u(y))\,J(dx,dy)\nonumber \\
&+\int_D \eta(x)\theta_n(u(x),0)\,\kappa_D(dx)\Big]\to \int_D \eta\,d\mu^+_c,
\end{align}
where $\kappa_D$ is the killing measure of the form $\EE$ restricted to $D$ and
\[
\theta_n(u(x),u(y))= 2\big(S_n(u(x))-S_n(u(y))\big)\big(2u(x)-S_n(u(x))-S_n(u(y))\big)
\]
with $S_n(z)= \max\{\min\{z,2n\},n\}$.
Comparing this result to the previously known
reconstruction formulas (studied before only for local operators) one can notice   some similarities as well as  some essential differences.
The local part of the carr\'e du champ operator $\Gamma$, in analogy with the case of local operators,  is restricted to the set where $u$ lies  between $n$ and $2n$,  but the non-local part of $\Gamma$ is modified in a much more subtle way.
Recall that $J(dx,\,dy)$ describes, roughly speaking, the intensity of  jumps from a region $dx$ to $dy$  of a particle that  moves according to  the dynamic given by $L$.
Formula \eqref{eq1.12} says  that  all the jumps  within  the set
$\{n\le u\le 2n\}$ contribute to $\mu_c^+$ and no jumps which occur only within  $\{u\le n\}$ or $\{u\ge 2n\}$ contribute to $\mu^+_c$.
Interestingly,  when one of the following cases happens: there are jumps
from $\{u\le n\}$ to $\{n\le u\le 2n\}$ or $\{u\le n\}$ to $\{u\ge 2n\}$ or $\{n\le u\le 2n\}$ to $\{u\ge 2n\}$, then the energy on the left-hand side of (\ref{eq1.12})
is suitably reduced but not to zero.

Our main results are proved in Sections \ref{sec4}--\ref{sec6}.
In Section \ref{sec7}, we provide some examples of forms satisfying the assumptions of the main theorems and  give remarks on the equivalence of (integral) solutions of (\ref{eq1.5}) to other notions of solutions (weak, duality and renormalized).

\section{Preliminaries}
\label{sec2}

In the paper, $E$ is a locally compact separable metric space and $D$  is an open nonempty subset  of $E$. We denote by $\partial$ a one-point compactification of $E$. If $E$ is already compact, then we adjoin $\partial$ to $E$ as an isolated point.
We denote by $\BBr(E)$ the $\sigma$-field of Borel subsets of $E$ and for  $B\in\BBr(E)$ we set
$\BBr(B):=\{W\in \BBr(E): W\subset B\}$. $\BB_b(B)$ is the set of all bounded Borel measurable functions on $B$.
We adopt the convention that every function $f$ on $D$ is extended to $D\cup\partial$ by setting $f(\partial)=0$.

\subsection{Dirichlet forms.}
Throughout what follows, we shall use some notions and results from the theory of Dirichlet forms and Markov processes. Most of them are found in the books \cite{CF,FOT}. For the convenience of the reader and to fix notation,  we recall here some of them.

Let $m$ be a nonnegative Radon measure on $E$ with full support, that is $m$ is a nonnegative Borel measure on $E$ finite on compact sets and  strictly positive on open sets.
In what follows $(\EE,\mathfrak D(\EE))$ is  a symmetric regular Dirichlet form on $L^2(E;m)$.
In the whole paper we assume that it is regular and transient.

By \cite[Theorem 2.1.3]{FOT}, each function $u\in \mathfrak D(\EE)$ admits a quasi-continuous $m$-version
that we denote by $\tilde u$.
We denote by $(\EE^D, \mathfrak D(\EE^D))$ the part of $(\EE,\mathfrak D(\EE))$ on $D$.
Recall that
\[
\mathfrak D(\EE^D)=\{u\in \mathfrak D(\EE):\tilde u\mbox{ q.e. on }D^c:=E\setminus D\},
\quad \EE^D(u,v)=\EE(u,v),\quad u,v\in \mathfrak D(\EE^D).
\]
By \cite[Theorem 4.4.3]{FOT},  $(\EE^D,\mathfrak D(\EE^D))$ is a symmetric regular Dirichlet form on $L^2(D;m)$.
We denote by  $\mathfrak D_e(\EE)$ the  extended Dirichlet space of $(\EE,\mathfrak D(\EE))$.
To simplify notation, we continue to write $F$ for $\mathfrak D_e(\EE)$.
Note that $F$ with the inner product $\EE$ is a Hilbert space. The dual space of $F$ is denoted by $F^*$.

Let $L$ be the operator associated with $(\EE,\mathfrak D(\EE))$, i.e. the unique nonpositive definite self-adjoint operator on $L^2(E;m)$ such that
\[
\mathfrak D(L)\subset \mathfrak D(\EE),\qquad \EE(u,v)=(-Lu,v),\quad u\in \mathfrak D(L),\,v\in \mathfrak D(\EE),
\]
where $(\cdot,\cdot)$ denotes the usual inner product in $L^2(E;m)$ (see \cite[Corollary 3.1]{FOT} for more details). The operator $L_D$ associated (in the above sense) with $(\EE^D,\mathfrak D(\EE^D))$ will be denoted by $L_D$.

We define  quasi notions (capacity, exceptional sets, quasi-continuous functions, etc.) with respect to $\EE$ (or $\EE^D$)  as in \cite{FOT}.
We say that a property of points in $E$ holds quasi-everywhere in $E$ (q.e.  in $E$ in abbreviation) if it holds outside some $\EE$-exceptional subset of $E$.
The capacity with respect to $\EE$ (resp. $\EE^D$) will be denoted by  $\mbox{Cap}$ (resp. $\mbox{Cap}_D$).

\subsection{Markov processes.}
Let $\Omega$ be a set of functions $\omega: [0,\infty)\to E\cup\partial$
that are right continuous and have left limits (c\`adl\`ag functions) and satisfy the following property:
if $\omega(t)=\partial$, then $\omega(s)=\partial,\, s\le t$.  We endow $\Omega$ with the Skorokhod metric (see \cite{Bil}). We let
\[
X_t(\omega):=\omega(t),\quad t\ge 0,\, \omega\in\Omega.
\]
By \cite[Theorems 4.2.8, 7.2.1]{FOT},  there exists a unique (up to equivalence) $m$-symmetric Hunt process
$\BM=((\FF_t)_{t\ge0},(P_x)_{x\in E\cup\partial})$ with state space $E$
associated with $(\EE,\mathfrak D(\EE))$. Here $(P_x)$ is a family of Borel probability measures on $\Omega$
and $\FF_t$ is a $\sigma$-algebra that is a suitable completion of
\[
\FF^0_t:=\sigma(X_s,\, s\le t).
\]
We denote by  $\BE_x$  the expectation with respect to the measure $P_x$.
Let $\PP$ denote the set of all probability measures on $\BBr(E)$ and let $\FF^0_{\infty}=\sigma(X_t,t\ge0)$.  For $\nu\in\PP$ we set
\[
P_\nu(\Lambda)= \int_EP_x(\Lambda)\,\mu(dx),\quad \Lambda\in\FF^0_{\infty}.
\]
The expectation with respect to $P_{\mu}$ will be denoted by  $\mathbb E_{\mu}$.

Let $\BBr^n(E)$ denote the family of all {\em nearly Borel measurable}
subsets of $E$ (see \cite[p. 392]{FOT}). For  $V\in \BBr^n(E)$ we let
\[
\tau_V=\inf\{t>0: X_t\notin V\}.
\]
By \cite[p. 392]{FOT},  $\tau_V$ is a stopping time. Note that from \cite[Theorem A.2.6, Theorem 4.1.3]{FOT} it follows that
\begin{equation}
\label{eq2.1}
P_x(\tau_V=0)=1 \quad \mbox{q.e. }x\in V^c.
\end{equation}
As it is customary, we let $\zeta:=\tau_E$.
We  denote by $\BM^D=((\FF_t)_{t\ge0},(P^D_x)_{x\in D\cup\partial})$ a Hunt process, called the part of $\BM$ on $D$,
associated with $(\EE^D,\mathfrak D(\EE^D))$. It is known   (see \cite[Section 4.4]{FOT}) that
$P^D_x=y^D_{\sharp} P_x$, where  $y^D_{\sharp} P_x$ denotes the push-forward of the
measure $P_x$ through the mapping $y^D$ defined by
\[
y^D:\Omega\to\Omega,\quad y^D(\omega)(t):=\omega(t),\, t<\tau_D(\omega),\quad y^D(\omega)(t):=\partial,\, t\ge\tau_D(\omega).
\]
We denote by $(P^D_t)_{t>0}$ and $(R^D_{\alpha})_{\alpha>0}$ the transition semigroup and the resolvent of $\BM^D$, that is
\[
P^D_tf(x)=\mathbb E^D_xf(X_t)=\mathbb E_x[\fch_{\{t<\tau_D\}}f(X_t)],\qquad R^D_{\alpha}f(x)=\mathbb E_x\int^{\tau_D}_0e^{-\alpha t}f(X_t)\,dt,\quad x\in D,
\]
for any  $f\in\BB_b(D)$.
We also set $P_t(x,B)=P_t\fch_B(x)$, $R_{\alpha}(x,B)=R_{\alpha}\fch_B(x)$, $B\in\BBr(E)$.

In the paper we will assume that $\BM^D$ satisfies the absolute continuity condition, that is
\[
R^D_{\alpha}(x,\cdot)\ll m\quad \mbox{for any $\alpha>0$ and $x\in D$}.
\]
Equivalently (see \cite[Theorem 4.2.4]{FOT}),
\begin{equation}
\label{eq2.6}
P^D_t(x,\cdot)\ll m\quad \mbox{for any $t>0$ and $x\in D$}.
\end{equation}
By \cite[Lemma 4.2.4]{FOT},
if $\BM^D$ satisfies the  absolute continuity condition, then for every $\alpha>0$  there exists a nonnegative $\BBr(D)\otimes\BBr(D)$-measurable function $r^D_{\alpha}:D\times D\rightarrow\BR$
such that
\[
R^D_{\alpha}f(x)=\int_Dr^D_{\alpha}(x,y)f(y)\,m(dy),\quad x\in E,\,f\in\BB_b(D).
\]
Furthermore, there exists a nonnegative symmetric  $\BBr(E)\otimes\BBr(E)$-measurable function $r^D:E\times E\rightarrow\BR$ such that
\[
R^Df(x):=R^D_0f(x)=\int_Dr^D(x,y)f(y)\,m(dy),\quad x\in D,\,f\in\BB_b(D).
\]
In fact, $r^D(x,y)=\lim_{\alpha\downarrow0}r^D_{\alpha}(x,y)$ (see the remarks in \cite[p. 256]{BG}). We call $r^D$ the resolvent density. Note that for each $y\in D$, $r^D(\cdot,y)$  is an excessive function relative to  $(P^D_t)_{t>0}$. Recall that a positive nearly
Borel function $u$ is called  $(P^D_t)$-excessive whenever $P^D_tu(x)\le u(x)$, $t\ge  0$,
$x\in D$ and $\lim_{t\to 0^+}P^D_tu(x)=u(x),\, x\in D$.

In what follows for a nonnegative Borel measure $\mu$ on $E$ we set
\begin{equation}
\label{eq.not1}
R^D_{\alpha}\mu(x)=\int_Dr^D_{\alpha}(x,y)\,\mu(dy),\quad
R^D\mu(x)=\int_Dr^D(x,y)\,\mu(dy),\quad x\in E.
\end{equation}

\subsection{Concentrated and smooth measures.} We denote  by  $\SSr(E)$ the set of all {\em smooth measures} on $E$.
Recall that a nonnegative measure $\mu$ belongs to $\SSr(E)$ if  there exists
an increasing sequence $\{F_n\}$ of closed subsets of $E$ such that
$\mbox{Cap}(K\setminus F_n)\to 0$ as $n\rightarrow\infty$ for every compact $K\subset E$
and $\mathbf1_{F_n}\cdot\mu\in F^*$, $n\ge1$ (see \cite[Section 2.2]{FOT}).
$\SSr(D)$ denotes the set of all  measures $\mu\in\SSr(E)$ such that $\mu(E\setminus D)=0$.
Let $\mu$ be a signed Borel measure on $E$, and let
$|\mu|=\mu^{+}+\mu^-$, where $\mu^+$ (resp. $\mu^-$) denotes the
positive (resp. negative) part of 
$\mu$. We say that $\mu$ is smooth if $|\mu|\in\SSr(E)$. We denote by
$\MMr_b(D)$ the set of all signed Borel measures on $D$ such that
$\|\mu\|_{TV}:=|\mu|(D)<\infty$, and by $\MMr_{0,b}(D)$ the subset of
$\MMr_b(D)$ consisting of all smooth measures. Recall that by \cite[Lemma 2.1]{FST}, for every
$\mu\in\MMr_b(D)$ there exists a unique pair
$(\mu_d,\mu_c)\in\MMr_{0,b}(D)\times\MMr_b(D)$ such that
$\mu_c$ is concentrated on some $\EE^D$-exceptional Borel subset of $D$
and
\[
\mu=\mu_c+\mu_d.
\]
The measure $\mu_c$ (resp. $\mu_d$) is called the {\em concentrated} (resp. {\em diffusion}) part of $\mu$. For a complete description of the structure of $\mu_c$ see \cite{BGO} for the case of Laplace operator and \cite{KR:BPAN} for the general case.

Let $\nu$ be a Borel measures on $E$ and $f\in \mathcal B(E)$.
To shorten notation,  in what follows we denote
$\langle f,\nu\rangle=\langle \nu,f\rangle:=\int_E f\,d\nu$ whenever the integral exists.

\section{Orthogonal projections and Poisson kernels}

In what follows, we denote by  $qC(E)$ (resp. $qC(D)$) the family of all quasi continuous functions on $E$ (resp. $D$).

Recall that a set $V\subset E$   is called {\em quasi open} if for any $\varepsilon>0$
there exists an open set $G_\varepsilon$ containing $V$ with $\mbox{Cap}(G_\varepsilon\setminus V)<\varepsilon$, where  $\mbox{Cap}$ is the capacity associated with $\EE$.
We denote by $\mathcal O_q$ the family of all quasi open nearly Borel subsets of $E$,
and by $\mathcal O$ the family of all open subsets of $E$.  Clearly $\mathcal O\subset \mathcal O_q$.
Note that $u:E\to\bar\BR$ belongs to $qC(E)$ if and only if $u$ is finite q.e.
and $u^{-1}(I)$ is a quasi open set for any open set $I\subset \mathbb R$
(see the comments preceding  \cite[Lemma 2.1.5]{FOT}).  We denote by  $\BBr^*(E)$
the $\sigma$-algebra of {\em universally measurable} subsets of $E$.
A set $B\subset E$ belongs to $\BBr^*(E)$ if for any probability measure $\mu$ on $\BBr(E)$
there exist $B_1,B_2\in \BBr(E)$ such that $B_1\subset B\subset B_2$ and $\mu(B_2\setminus B_1)=0$.
Note that $\BBr^n(E)\subset \BBr^*(E)$.

For a quasi open $V\subset E$ we set
\[
F(V)=\{u\in F: u=0\mbox{ q.e. on }V^c:=E\setminus V\}.
\]
$F(V)$ is a closed linear subspace of $F$. We denote by $F(V)^{\bot}$ the orthogonal
complement of $F(V)$ in $F$ and by $\pi_V$ the orthogonal projection onto the space $F(V)$:
\[
F=F(V)\oplus F(V)^{\bot},\qquad \pi_V:F\rightarrow F(V).
\]
For $g \in F$ we set
\[
h_V(g)=g-\pi_V(g).
\]
Then $h_V(g)\in F(V)^{\bot}$ and, since $h_V(g)-g\in F(V)$,
\[
h_V(g)=g\quad\mbox{q.e. on }V^c.
\]

For  $U\in \mathcal O_q$ we set $\mathcal O_q(U):=\{V\in \mathcal O_q: V\subset U\}$.

\begin{definition}
We say that a family $\{P(x,dy),\, x\in E\}$ is a {\em sub-stochastic kernel on } $E$  if
\begin{enumerate}[{\rm (a)}]
\item $x\mapsto P(x,B)$ is universally measurable for any $B\in\BBr(E)$,
\item for each $x\in E$, $\BBr(E)\ni B\mapsto P(x,B)$ is a  positive   measure with $P(x,E)\le 1$.
\end{enumerate}
\end{definition}

\begin{definition}
Let $W\in\mathcal O_q$.  We say that a sub-stochastic   kernel  $\{P(x,dy),\, x\in E\}$ on $E$ is  {\em smooth (diffuse) on } $W$  if
 for each $x\in W$, $\BBr(E)\ni B\mapsto P(x,B)$ is a  diffuse  measure.
\end{definition}

By \cite[Theorem 4.3.2]{FOT} there exists an  exceptional set $N\subset E$
and a family of sub-stochastic kernels
$\{P_V(x,dy)$, $x\in E,\, V\in\OO_q \}$, that are  diffuse   on    $V\setminus N$ and supported in $V^c$
for any $x\in V\setminus N$, such that for every $g\in F$,
\[
h_V(g)(x)=\int_{V^c}g(y)\,P_V(x,dy)\quad \mbox{q.e. }x\in E.
\]
For each  $g\in\BB^+(E)$ (or  $g\in\BB_b(E)$) we let
\[
P_Vg(x)= \int_{V^c} g(y)\,P_V(x,dy),\quad x\in E\setminus N.
\]
For  $g\in\BB^+(E)$ we now let
\begin{equation}
\label{eq3.1}
\Pi_V(g)(x) =g(x)-P_V(g)(x),\quad x\in E\setminus N.
\end{equation}
By \cite[Theorem 4.3.2]{FOT} again (see also \cite{Sil}), $P_V(g)$ has the following  probabilistic interpretation:
for every $g\in \BB^+(E)$,
\begin{equation}
\label{eq3.2}
P_V(g)(x)=\mathbb E_x g(X_{\tau_V}),\quad x\in E\setminus N.
\end{equation}
It follows in particular that for every $B\in\BBr(V^c)$,
\[
P_V(x,B)=P_x(X_{\tau_V}\in B),\quad x\in E\setminus N,
\]
so $P_V(x,dy)$ is the distribution of the random variable $X_{\tau_V}$
provided that the process starts form $x$.
Clearly $P_V(x,dy)$ is concentrated on $V^c$, but if $x\in V$ and $X$ has continuous sample paths,  i.e. when $\EE$ is local (see \cite[Theorem 4.5.1]{FOT}), it is concentrated
on the Euclidean boundary $\partial V$.  Note also that by (\ref{eq2.1}), for any $g\in \BB^+(E)$ and $V\in\OO_q$\,,
\[
P_V(g)(x)=g(x)\quad \mbox{q.e. }x\in E\setminus V.
\]

Before formulating the next result let us recall (see \cite[Section 5.1]{FOT} for details)
that there is a one-to-one correspondence (so called Revuz duality) between
positive continuous additive functionals (PCAF) of $\BM$ and positive smooth measures.
For any $\nu\in \SSr(E)$ we denote by  $A^\nu$ the PCAF of $\BM$ in Revuz duality with $\nu$.
Furthermore, for any Borel measure such that  $|\nu|\in\SSr(E)$  we let $A^\nu= A^{\nu^+}-A^{\nu^-}$.
For $\nu\in \SSr(E)$
and $W\in\mathcal O_q$, we let
\[
R^W\nu(x):= \mathbb E_xA^\nu_{\tau_W},\quad x\in E.
\]
This notion agrees with \eqref{eq.not1} in case $W\in \mathcal O$ (see \cite[Theorem 5.1.3]{FOT}).

For $W\in \mathcal O_q$ we set
\[
\RRr(W)=\{\mu:|\mu|\in\SSr(E), R^W|\mu|<\infty\mbox{ q.e.}\}.
\]
Elements of $\RRr(W)$ may be called smooth (signed) measures of finite potential on $W$.
By \cite[Proposition 3.2]{KR:CM} applied to the form $\EE^W$ we have $\MMr_{0,b}(W)\subset\RRr(W)$.

The following two simple lemmas will be useful.

\begin{lemma}
\label{lem2.4}
Let $V,W\in\mathcal O_q $ and $V\subset W$. If $\mu\in\RRr(W)$, then
$\Pi_V(R^W\mu)=R^V\mu$ q.e.
\end{lemma}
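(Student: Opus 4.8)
\textbf{Proof plan for Lemma \ref{lem2.4}.}

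The plan is to identify both sides as quasi-continuous $m$-versions of the same element of the Hilbert space $F$, using the characterizations of $R^W\mu$ and $\Pi_V$ via orthogonal projection. First I would recall that, for $\mu\in\RRr(W)$, the potential $R^W\mu$ is (q.e. equal to) the quasi-continuous version of an element of $F$; more precisely, since $|\mu|\in\SSr(E)$ with $R^W|\mu|<\infty$ q.e., the potential $R^W\mu$ is in the extended Dirichlet space $F=\mathfrak D_e(\EE)$ and is characterized by $\EE(R^W\mu,v)=\langle v,\mu\rangle$ for all $v\in F(W)$, where I use that $R^W\mu$ is the $\EE^W$-potential of $\mu$ (this is the content of the Revuz-duality identification, cf. \cite[Theorem 5.1.3]{FOT} and \cite[Proposition 3.2]{KR:CM}). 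Likewise, because $V\subset W$, one has $\mu\in\RRr(V)$, so $R^V\mu\in F$ satisfies $\EE(R^V\mu,v)=\langle v,\mu\rangle$ for all $v\in F(V)$.

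Next I would unwind the definition of $\Pi_V$. By \eqref{eq3.1}, $\Pi_V(R^W\mu)=R^W\mu-P_V(R^W\mu)$ q.e., and by the probabilistic/analytic description of $P_V$ above, $P_V(g)=g-h_V(g)$ where $h_V(g)=g-\pi_V(g)$ is obtained from the orthogonal projection $\pi_V:F\to F(V)$; hence, applied to $g=R^W\mu\in F$, we get $\Pi_V(R^W\mu)=\pi_V(R^W\mu)$ q.e. So it suffices to prove $\pi_V(R^W\mu)=R^V\mu$ as elements of $F$ (they are then equal $m$-a.e., hence q.e. since both are quasi-continuous). Since $R^V\mu\in F(V)$ (it vanishes q.e. on $V^c$, being a $\tau_V$-potential), I would verify the variational characterization of the projection: for every $v\in F(V)$,
\[
\EE\big(R^W\mu - R^V\mu,\, v\big) = \langle v,\mu\rangle - \langle v,\mu\rangle = 0,
\]
using the two potential identities from the previous paragraph (both valid for $v\in F(V)\subset F(W)$). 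This says $R^W\mu - R^V\mu \perp F(V)$, while $R^V\mu\in F(V)$, so by uniqueness of the orthogonal decomposition $R^W\mu = R^V\mu + (R^W\mu - R^V\mu)$ we conclude $\pi_V(R^W\mu)=R^V\mu$, and therefore $\Pi_V(R^W\mu)=R^V\mu$ q.e.

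The main obstacle I anticipate is the rigorous justification that $R^W\mu$ and $R^V\mu$ genuinely lie in $F$ and satisfy the stated testing identities against all of $F(W)$ (resp. $F(V)$), rather than merely against a dense subclass — i.e. the passage from the smoothness hypothesis $|\mu|\in\SSr(E)$, $R^W|\mu|<\infty$ q.e. to membership in the extended Dirichlet space together with the energy identity. This is where I would lean on \cite[Proposition 3.2]{KR:CM} (applied to the forms $\EE^W$ and $\EE^V$) to get $\mu\in\RRr(W)\cap\RRr(V)$ and the corresponding potentials in the extended Dirichlet spaces, plus a standard monotone-class/approximation argument (writing $\mu=\mu^+-\mu^-$, truncating via the PCAF $A^\mu$, and using $\EE$-convergence of the approximating potentials) to extend the identity $\EE(R^W\mu,v)=\langle v,\mu\rangle$ from bounded potentials to all $v\in F(W)$. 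A minor technical point to handle is the exceptional set: the kernels $P_V$ are defined off a fixed exceptional set $N$, and the identities $\Pi_V(R^W\mu)=\pi_V(R^W\mu)$ and $h_V(g)=g$ q.e.\ on $V^c$ hold only q.e., so the final equality is necessarily asserted only q.e., consistent with the statement.
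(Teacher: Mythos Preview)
Your argument is essentially the paper's: test $R^W\mu-R^V\mu$ against arbitrary $\eta\in F(V)$ via the potential identity $\EE(R^W\mu,\eta)=\langle\eta,\mu\rangle=\EE(R^V\mu,\eta)$ to obtain orthogonality, and conclude $\pi_V(R^W\mu)=R^V\mu$. One caveat: your opening assertion that $R^W\mu\in F$ for general $\mu\in\RRr(W)$ is not correct, and the paper handles precisely the obstacle you anticipate by reducing at the outset (via the nest $\{F_n\}$ in the definition of $\SSr(E)$) to the case $\mu\ge0$, $\mu\in F^*$---which is your proposed approximation phrased as a ``without loss of generality''.
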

\begin{proof}
Without loss of generality (see the definition of the space $\mathscr S(E)$), we may assume that $\mu\ge0$ and $\mu\in F^*$.
Let $\eta\in F(V)$. Then
\[
\EE(R^W\mu,\eta)=\int_V\eta\,d\mu=\EE(R^V\mu,\eta).
\]
Hence $\EE(R^W\mu-R^V\mu,\eta)=0$ for $\eta\in F(V)$, which implies that $\Pi_V(R^W\mu-R^V\mu)=0$ q.e. As a result,   $\Pi_V(R^W\mu)=R^V\mu$ q.e.
\end{proof}
Note that Lemma \ref{lem2.4} is a slight generalization of Dynkin's formula (see \cite[(4.4.3)]{FOT}).

\begin{lemma}
\label{lem2.5}
Let $g\in F$. If $V,W\in\mathcal O_q $ and $V\subset W$, then
$P_V(P_W(g))=P_W(g)$ q.e.
\end{lemma}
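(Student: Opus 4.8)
The plan is to reduce the statement to the already-established Lemma \ref{lem2.4} by expressing the Poisson operators $P_V$ and $P_W$ in terms of the orthogonal projections $\pi_V$, $\pi_W$ and the reduced (harmonic) functions $h_V$, $h_W$. Recall that for $g\in F$ we have $P_W(g) = h_W(g)$ q.e., and $h_W(g) = g - \pi_W(g) \in F(W)^\perp$, with $h_W(g) = g$ q.e. on $W^c$. The key algebraic observation is the nesting of the spaces: since $V\subset W$, we have $F(V)\subset F(W)$, hence $F(W)^\perp \subset F(V)^\perp$. Therefore $h_W(g)$, being an element of $F(W)^\perp$, already lies in $F(V)^\perp$, which means $\pi_V(h_W(g)) = 0$ and consequently $h_V(h_W(g)) = h_W(g)$.

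First I would note that $P_W(g)\in F$ (indeed $h_W(g)\in F(W)^\perp\subset F$), so that $P_V$ may legitimately be applied to it and the identity $P_V(\varphi) = h_V(\varphi)$ q.e.\ is available for $\varphi = P_W(g)$. Then the chain reads
\[
P_V(P_W(g)) = h_V(h_W(g)) = h_W(g) = P_W(g)\quad\text{q.e.},
\]
where the first and last equalities are the defining property of $P$ versus $h$ on $F$, and the middle equality is the nesting argument above ($h_W(g)\in F(W)^\perp\subset F(V)^\perp$ forces $\pi_V$ to annihilate it). One small point to be careful about is the exceptional set: all of these identities hold only q.e., but a finite (countable) union of exceptional sets is exceptional, so the composed identity still holds q.e.\ on $E$.

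The only mild obstacle is making sure the manipulation stays within the realm where the formulas apply: $P_V$ and $P_W$ were introduced as operators on $\BB^+(E)$ and $\BB_b(E)$ via the kernels $P_V(x,dy)$, whereas the projection identity $P_W(g) = h_W(g)$ q.e.\ is an $F$-statement; reconciling these requires knowing that $P_W(g)$, as a function, is a quasi-continuous $m$-version of the $F$-element $h_W(g)$, which is exactly the content of the kernel representation from \cite[Theorem 4.3.2]{FOT}. Once this identification is in place, no computation is needed — the result is pure Hilbert-space geometry (iterated orthogonal projections onto a decreasing chain of subspaces) combined with the q.e.\ bookkeeping.
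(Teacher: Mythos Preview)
Your proposal is correct and is essentially the paper's argument: both use the nesting $F(V)\subset F(W)$ to conclude that $P_W(g)=h_W(g)\in F(W)^\perp\subset F(V)^\perp$, whence $\pi_V(P_W(g))=0$ (the paper phrases this as $\EE(w,w)=\EE(P_W(g),\Pi_V(P_W(g)))=0$ via self-adjointness of the projection, which is the same computation). One small remark: your opening sentence promises a reduction to Lemma~\ref{lem2.4}, but you never actually invoke it---the argument you give is self-contained Hilbert-space geometry, so that reference should be dropped.
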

\begin{proof}
Set $w=\Pi_V(P_W(g))$. Since $\Pi_V$ is a self-adjoint (as a projection)
operator and $w\in F(V)\subset F(W)$, $P_W(g)\in F(W)^{\bot}$, we have
\[
\EE(w,w)=\EE(P_W(g),\Pi_V(P_W(g)))=0,
\]
which implies the  desired result.
\end{proof}

\begin{corollary}
\label{cor2.6}
For any  $V,W\in\mathcal O_q $ such that $V\subset W$ we have
\[
P_V(x,dz)P_{W}(z,dy)=P_W(x,dy)\quad\text{for q.e. }x\in E.
\]
\end{corollary}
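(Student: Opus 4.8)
The plan is to read the asserted equality as an identity between the composed sub-stochastic kernel $(P_VP_W)(x,dy):=\int_E P_V(x,dz)\,P_W(z,dy)$ and the kernel $P_W(x,dy)$, and to derive it from Lemma \ref{lem2.5}. First I would note that, by Fubini's theorem for kernels, for every bounded Borel $g$ and every $x$ outside the exceptional set $N$ of \cite[Theorem 4.3.2]{FOT},
\[
\int_E g\,d(P_VP_W)(x,\cdot)=\int_E\Big(\int_E g(y)\,P_W(z,dy)\Big)P_V(x,dz)=P_V\big(P_W(g)\big)(x)
\]
(the interchange is harmless since all kernels involved are sub-stochastic, hence of bounded mass; that $P_V(x,\cdot)$ does not charge $N$ for q.e.\ $x$, since hitting distributions do not charge exceptional sets from q.e.\ starting points, makes the identification of $P_V(P_W(g))$ with the $(P_VP_W)(x,\cdot)$-integral legitimate). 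Thus it suffices to show $P_V(P_W(g))=P_W(g)$ q.e.\ for every $g$ in some family that determines finite Borel measures on $E$. Since $E$ is locally compact, separable and metric, every finite Borel measure on $E$ is Radon, hence uniquely determined by its integrals against $C_c(E)$ (Riesz representation), and $(C_c(E),\|\cdot\|_\infty)$ is separable.

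Next I would use the regularity of $(\EE,\mathfrak D(\EE))$, which makes $\mathfrak D(\EE)\cap C_c(E)$ uniformly dense in $C_c(E)$, to fix a countable set $\{g_k\}\subset\mathfrak D(\EE)\cap C_c(E)\subset\mathfrak D_e(\EE)=F$ that is $\|\cdot\|_\infty$-dense in $C_c(E)$. Applying Lemma \ref{lem2.5} to each $g_k\in F$ yields $\EE$-exceptional sets $N_k$ with $P_V(P_W(g_k))=P_W(g_k)$ on $E\setminus N_k$; the set $N_\infty:=N\cup\bigcup_k N_k$ is still $\EE$-exceptional, and it is chosen once and for all, independently of the test function — this is exactly what the countability buys us. For $x\notin N_\infty$, both $(P_VP_W)(x,\cdot)$ and $P_W(x,\cdot)$ are finite measures of mass $\le1$, so for any $g\in C_c(E)$ and any sequence $g_{k_j}\to g$ uniformly we have $|\int(g-g_{k_j})\,d\nu|\le\|g-g_{k_j}\|_\infty$ for $\nu\in\{(P_VP_W)(x,\cdot),P_W(x,\cdot)\}$; combining with the previous line gives $\int g\,d(P_VP_W)(x,\cdot)=\int g\,dP_W(x,\cdot)$ for all $g\in C_c(E)$, whence the two Radon measures coincide on $\BBr(E)$, which is the assertion.

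The only step with any content beyond Lemma \ref{lem2.5} is this passage from a function identity that holds a priori only for elements of $F$, and only up to a $g$-dependent null set, to a genuine kernel identity; I expect the ``obstacle'' to be purely bookkeeping — locating a countable measure-determining family inside $F$ (done via regularity) and the elementary fact that a countable union of $\EE$-exceptional sets is $\EE$-exceptional. As an alternative one can bypass Lemma \ref{lem2.5} and argue directly from the probabilistic representation \eqref{eq3.2}: since $V\subset W$ forces $\tau_V\le\tau_W$ and $\tau_W=\tau_V+\tau_W\circ\theta_{\tau_V}$, the strong Markov property of $\BM$ gives, for q.e.\ $x$ and every bounded nonnegative nearly Borel $g$,
\[
P_V(P_W(g))(x)=\mathbb E_x\big[(P_Wg)(X_{\tau_V})\big]=\mathbb E_x\big[\mathbb E_{X_{\tau_V}}g(X_{\tau_W})\big]=\mathbb E_x\big[g(X_{\tau_W})\big]=P_W(g)(x),
\]
and taking $g=\fch_B$ yields the corollary directly, with a single exceptional set.
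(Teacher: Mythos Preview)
Your proof is correct and follows essentially the same route as the paper: apply Lemma \ref{lem2.5} to $g\in C_c(E)\cap F$, use separability of $C_c(E)$ to handle the q.e.\ exceptional set uniformly, then approximate arbitrary $g\in C_c(E)$ using regularity of the form and the fact that both kernels are sub-stochastic. Your write-up is more explicit about the bookkeeping (countable dense family, union of exceptional sets, Riesz representation), and your alternative via the strong Markov property and the identity $\tau_W=\tau_V+\tau_W\circ\theta_{\tau_V}$ is a valid and slightly more direct shortcut that the paper does not take.
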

\begin{proof}
Set $\mu_x(dy)=P_V(x,dz)P_{W}(z,dy)$ and $\nu_x(dy)=P_W(x,dy)$.
By Lemma \ref{lem2.5}, for any $f\in C_c(E)\cap F$, $\langle\mu_x,f\rangle=\langle\nu_x,f\rangle$ for q.e. $x\in E$
(we use  separability of $C_c(E)$).
Since $(\EE,\mathfrak D(\EE))$ is regular, using an approximation argument we get
the above equality for all $f\in C_c(E)$.  This implies the desired result.
\end{proof}

\section{The space $\DD^1$ and its properties}
\label{sec4}

\subsection{Definition and basic properties.}
Let  $U\in\mathcal O_q$ and
\[
\Theta_U=\{\rho:U\rightarrow\BR,\mbox{ $\rho$ is strictly positive,
$\|\rho\|_{L^1(U;m)}=1$}\}.
\]
For $\rho\in\Theta_U$ we define the space $\DD^1_{\rho}(U)$ by
\[
\DD^1_{\rho}(U)=\{u\in\BB^n(E):  u=0 \text{ q.e. on } E\setminus U \text{ and } \lim_{n\rightarrow\infty}
\|(|u|-n)^+\|_{\DD^1_\rho(U)}=0\}.
\]
where
\[
\|u\|_{\DD^1_\rho(U)}=\sup_{V\in\OO_q(U)}\|P_V(|u|)\|_{L^1_{\rho}(U; m)}.
\]
We also let $L^1_\rho(U;m)$ denote the space of measurable functions $f$ on $U$
such that $\int_U|f|\rho\,dm<\infty$.

\begin{remark}
If $u\in\DD^1_{\rho}(U)$, then $\|u\|_{\DD^1_{\rho}(U)}=\sup_{V\in\OO_q(U)}\mathbb E_{\rho\cdot m}|u|(X_{\tau_V})<\infty$. The equality is immediate from (\ref{eq3.2}). Furthermore, if
$u\in\DD^1_{\rho}(U)$, then
\[
c_N:=\sup_{V\in\OO_q(U)}\mathbb E_{\rho\cdot m}(|u|-N)^+=\sup_{V\in\OO_q(U)}\mathbb E_{\rho\cdot m}[(|u|-N)\fch_{\{|u|>N\}}(X_{\tau_V})]<\infty
\]
for some $N\ge1$. Since $\rho \in\Theta_U$,  it follows that
\[
 \sup_{V\in\OO_q(U)}\mathbb E_{\rho\cdot m}[|u|(X_{\tau_V})]\le c_N+N.
 \]
\end{remark}

We will also need the following spaces:
\[
\DD_\rho^{1,c}(U)= \DD^1_{\rho}(U)\cap qC(U),\qquad \DD^1(U)=\bigcup_{\rho\in \Theta_U}\DD^1_\rho(U), \qquad
\DD^{1,c}(U)=  \DD^1(U)\cap qC(U).
\]
In case $U=E$ we omit $E$ in the notation. In the sequel, for $U\in\OO$ we will denote by $\TT_U$  the set of all $(\FF_t)$-stopping times $\tau$ such that $\tau\le\tau_U$.

\begin{proposition}
\label{prop4.1}
Let $U\in \mathcal O_q$ and $\rho\in\Theta_U$. If $u\in \DD^{1,c}_\rho(U)$, then
\[
\|u\|_{\DD^1_{\rho}(U)}=\|\sup_{V\in\OO_q(U)}P_V(|u|)\|_{L^1_{\rho}(U; m)}.
\]
Furthermore,
\[
\|u\|_{L^1_\rho(U)}\le  \|u\|_{\DD^1_{\rho}(U)}.
\]
\end{proposition}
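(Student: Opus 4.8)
The plan is to prove the non-trivial equality $\|u\|_{\DD^1_\rho(U)} = \|\sup_{V\in\OO_q(U)} P_V(|u|)\|_{L^1_\rho(U;m)}$ by showing that the family $\{P_V(|u|): V\in\OO_q(U)\}$ is upward directed (a lattice under pointwise max), so that the supremum of the integrals equals the integral of the pointwise supremum by monotone convergence along a suitable countable increasing sequence. The inequality $\le$ is trivial since $P_V(|u|) \le \sup_{W} P_W(|u|)$ pointwise for each $V$. For $\ge$, the first key step is: given $V_1, V_2 \in \OO_q(U)$, produce $V_3 \in \OO_q(U)$ with $P_{V_3}(|u|) \ge \max\{P_{V_1}(|u|), P_{V_2}(|u|)\}$ q.e. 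The natural candidate is $V_3 = V_1 \cap V_2$ — but this goes the wrong way, since shrinking $V$ tends to \emph{decrease} $\tau_V$ and hence $P_V(|u|)$ should \emph{increase} with... wait, let me reconsider: $\tau_{V_1\cap V_2} \le \tau_{V_i}$, and by the strong Markov property and excessivity-type arguments $P_{V_1\cap V_2}(|u|)(x) = \BE_x |u|(X_{\tau_{V_1\cap V_2}})$. The right comparison comes from Lemma \ref{lem2.5} / Corollary \ref{cor2.6}: for $V \subset W$, $P_V(P_W(|u|)) = P_W(|u|)$ q.e. So in fact I would argue that for $V \subset W$, $P_W(|u|) = P_V(P_W(|u|)) \le P_V(|u|)$ q.e. whenever $P_W(|u|) \le |u|$ q.e.\ on $W^c$ — and since $P_W(|u|) = |u|$ q.e.\ on $W^c$, one needs $P_V(|u|) \ge P_V(P_W(|u|))$, i.e.\ monotonicity of $g \mapsto P_V(g)$ applied to $|u| \ge$ (something). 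This shows $V \mapsto P_V(|u|)$ is \emph{decreasing} in $V$, hence the directed family is indexed by \emph{small} sets: given $V_1, V_2$, take $V_3 = V_1 \cap V_2 \in \OO_q(U)$, then $P_{V_3}(|u|) \ge P_{V_i}(|u|)$ q.e. for $i = 1,2$.

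The second step is to pass from the directed family to the supremum. Using separability (as in the proof of Corollary \ref{cor2.6}, exploiting separability of $C_c(E)$ and regularity of $\EE$) one extracts a countable subfamily $\{V_k\}$, then sets $W_n = V_1 \cap \cdots \cap V_n$, obtaining an increasing (in the pointwise-value sense) sequence $P_{W_n}(|u|) \uparrow e_{|u|} := \sup_{V\in\OO_q(U)} P_V(|u|)$ q.e.; measurability (nearly Borel) of $e_{|u|}$ is cited from \cite[Theorem V.1.17]{BG} as already noted in the introduction. By monotone convergence, $\int_U e_{|u|}\rho\,dm = \lim_n \int_U P_{W_n}(|u|)\rho\,dm \le \|u\|_{\DD^1_\rho(U)}$, and since $P_V(|u|) \le e_{|u|}$ for every $V$, also $\|u\|_{\DD^1_\rho(U)} \le \int_U e_{|u|}\rho\,dm$. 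Here the quasi-continuity hypothesis $u \in qC(U)$ enters: it guarantees $P_V(|u|)$ is well-behaved (the representation \eqref{eq3.2} and the comparison lemmas require working modulo exceptional sets, and $|u|$ quasi-continuous ensures the q.e.\ statements are consistent). The finiteness of all quantities follows from the Remark preceding the proposition, since $u \in \DD^1_\rho(U)$.

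Finally, for the second assertion $\|u\|_{L^1_\rho(U)} \le \|u\|_{\DD^1_\rho(U)}$: take $V = U \in \OO_q(U)$. Then $\tau_U$ — wait, we need $X_{\tau_V}$ to recover $|u|$ on $U$, which it does not directly. Instead, consider $V_\epsilon \uparrow U$ a sequence of quasi-open sets exhausting $U$ with $\tau_{V_\epsilon} \uparrow \tau_U$; then $X_{\tau_{V_\epsilon}} \to X_{\tau_U-}$ and by quasi-left-continuity / Fatou, $\liminf_\epsilon P_{V_\epsilon}(|u|)(x) \ge |u|(x)$ q.e.\ on $U$ (using $u = 0$ q.e.\ on $U^c$ and quasi-continuity of $u$ to identify the limit). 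Hence $\int_U |u|\rho\,dm \le \liminf_\epsilon \int_U P_{V_\epsilon}(|u|)\rho\,dm \le \|u\|_{\DD^1_\rho(U)}$ by Fatou's lemma.

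\textbf{Main obstacle.} The crux is the monotonicity $V \subset W \Rightarrow P_W(|u|) \le P_V(|u|)$ q.e., i.e.\ establishing that the family is directed in the correct direction and that intersections stay in $\OO_q(U)$ and behave well (nearly Borel measurability of $V_1 \cap V_2$, and that $P_{V_1\cap V_2}$ is the kernel given by \cite[Theorem 4.3.2]{FOT} for that set). Chaining the comparison lemmas through exceptional sets — keeping track that the null set can be taken uniform over a countable family — and justifying the interchange of $\liminf$ with the exhaustion in the second part via quasi-left-continuity of $\BM$ are the delicate technical points; the monotone/Fatou convergence at the end is routine once the directedness is in place.
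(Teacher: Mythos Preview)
Your central claim---that $V\subset W$ implies $P_V(|u|)\ge P_W(|u|)$ q.e., so that the family $\{P_V(|u|):V\in\OO_q(U)\}$ is upward directed under intersection---is false in general, and the whole argument rests on it. A concrete counterexample: take $u$ quasi-continuous with $u\equiv 0$ on a ball $B$ around $x$ and $u\equiv 1$ on $\partial W$ for some large $W\supset B$. With $V\subset B$ a tiny ball around $x$, $X_{\tau_V}$ lands in $B$ where $u=0$, so $P_V(|u|)(x)=0$; but $P_W(|u|)(x)=1$. Thus $P_V(|u|)<P_W(|u|)$ although $V\subset W$. The identity $P_V(P_W g)=P_Wg$ from Lemma~\ref{lem2.5} only tells you $P_Wg$ is harmonic on $V$; to deduce $P_Wg\le P_Vg$ you would need $P_W(|u|)\le |u|$ on $V^c$, and on $W\setminus V$ there is no such bound. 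The family $\{P_V(|u|)\}$ is simply not directed for generic $u$, so no monotone-convergence argument along intersections can work.

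The paper's route is different and avoids directedness entirely. For bounded $u\ge 0$ one invokes optimal stopping (\cite[Theorem~2.41]{EK}): the Snell envelope $w(x)=\sup_{\tau\in\TT_U}\BE_x u(X_\tau)$ is attained at the \emph{single} stopping time $\tau^*=\tau_{V^*}$ with $V^*=\{w>u\}\cap U\in\OO_q(U)$, and this same $V^*$ is optimal for every starting point $x$ simultaneously. Hence $\sup_V P_V(u)(x)=P_{V^*}(u)(x)$ for all $x$, which immediately gives $\int\sup_V P_V(u)\,\rho\,dm=\int P_{V^*}(u)\,\rho\,dm\le\sup_V\int P_V(u)\,\rho\,dm$. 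For unbounded $u$ one truncates $u_n=u\wedge n$, applies the bounded case, and passes to the limit via Fatou. The existence of a common optimizer is exactly what replaces the directedness you were hoping for.

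Your argument for the second inequality is also off: sending $V_\epsilon\uparrow U$ gives $X_{\tau_{V_\epsilon}}\to X_{\tau_U-}$, which has nothing to do with the starting point $x$. The paper instead shrinks $V=B(x,\epsilon)\downarrow\{x\}$ so that $\tau_V\downarrow 0$; then right-continuity of $t\mapsto u(X_t)$ (from quasi-continuity of $u$, \cite[Theorem~4.2.2]{FOT}) plus Fatou give $\liminf_\epsilon \BE_x|u|(X_{\tau_{B(x,\epsilon)}})\ge |u|(x)$ a.e., whence $\sup_V P_V(|u|)\ge |u|$ a.e.\ and the inequality follows.
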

\begin{proof}
We may and will assume that $u$ is nonnegative. Clearly, we have
\begin{equation}
\label{eq4.1}
\|u\|_{\DD^1_\rho(U)}\le \|\sup_{V\in\OO_q(U)}P_V(u)\|_{L^1_\rho(U; m)}.
\end{equation}
To show the opposite inequality we will frequently use relation   \eqref{eq3.2}  without special mention.
To simplify the notation in the remainder of the proof we shall  omit the subscript $U$ in $\TT_U$.
Set $w(x)=\sup_{\tau\in\TT}\mathbb E_xu(X_\tau)$, $x\in E$.
First suppose  that $u$ is bounded. By \cite[Theorem 2.41, page 140]{EK}, for any $\nu\in\PP$,
\[
\sup_{\tau\in\mathcal T}\mathbb E_\nu u(X_\tau)=\mathbb E_\nu u(X_{\tau^*}),
\]
where $\tau^*=\inf\{t\ge 0: w(X_t)=u(X_t)\}\wedge\tau_U$. Observe that $\tau^*=\tau_{V^*}$, where $V^*=\{w>u\}\cap U$.
As a result
\begin{equation}
\label{eq4.2}
\sup_{\tau\in\mathcal T}\BE_\nu u(X_\tau)=\sup_{V\in\OO_q(U)} \mathbb E_\nu u(X_{\tau_V})=\BE_\nu u(X_{\tau_{V^*}}).
\end{equation}
Consequently,
\begin{align}
\label{eq4.3}
\|\sup_{V\in \mathcal O_q(U)}P_V(u)\|_{L^1_{\rho}(U; m)}
&=\int_U\sup_{V\in\OO_q(U)}\BE_x u(X_{\tau_V})\,\rho(x)\,dx\nonumber \\
&=\int_U \mathbb E_x u(X_{\tau_{V^*}})\,\rho(x)\,dx=\mathbb E_{\rho\cdot m}u(X_{\tau_{V^*}})\nonumber \\
&=\sup_{V\in\OO_q(U)}\mathbb E_{\rho\cdot m}u(X_{\tau_V})
=\sup_{V\in\OO_q(U)}\|P_V(u)\|_{L^1_{\rho}(U;m)}.
\end{align}
To show the general case, write $u_n= u\wedge n$.
By \eqref{eq4.3},
\[
\int_U\sup_{V\in \mathcal O_q(U)} P_V(u_n)(x)\,\rho(x)\,dx=\sup_{V\in\OO_q(U)}\int_U  P_V(u_n)(x)\,\rho(x)\,dx.
\]
Applying  Fatou's lemma we get
\begin{align*}
\int_U\sup_{V\in \mathcal O_q(U)}P_V(u)(x)\,\rho(x)\,dx&\le \liminf_{n\to \infty} \int_U\sup_{V\in \mathcal O_q(U)}P_V(u_n)(x)\,\rho(x)\,dx\\
&\le \sup_{V\in \mathcal O_q(U)}\int_U  P_V(u)(x)\,\rho(x)\,dx,
\end{align*}
which together with \eqref{eq4.1} gives the asserted equality.

As to the inequality claimed in the proposition, recall that by \cite[Theorem 4.2.2]{FOT},
$[0,\tau_U)\ni t\mapsto u(X_t)$ is right continuous under the measure $P_x$ for a.e. $x\in U$.
Hence, by Fatou's lemma, $\lim_{\varepsilon\to 0}\mathbb E_x u(X_{\tau_{B(x,\varepsilon)}})\ge u(x)$
a.e. As a result, $\sup_{V\in \mathcal O_q(U)}P_V(u)(x)\ge u(x)$ a.e. This finishes the proof.
\end{proof}

\begin{lemma}
\label{lem4.2}
The following assertions hold true for any $U\in\mathcal O$ and any $\rho\in \Theta_U$:
\begin{enumerate}[\rm(i)]
\item $\mathcal B^n_b(U)\subset \DD^1_\rho(U)$ and   $\overline{\BB_b(U)}^{\DD^1_\rho(U)}=\DD^1_\rho(U)$.

\item  Suppose that $u\in\BB(U)$ and there exists a nonnegative measure $\nu\in\RRr(U)$
such that $|u|\le R^U\nu$ q.e. Then $u\in\DD^1(U)$ and $\|u\|_{\DD^1_\rho}\le \int_DR^U\rho\,d\nu$.

\item $F(U)\subset \DD^{1,c}(U)$ and   $\|u\|_{\DD^1_\rho(U)}\le \|u\|_F\sqrt{(\rho, R^U\rho)}$ for $u\in F(U)$.
\end{enumerate}
\end{lemma}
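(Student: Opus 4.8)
The plan is to prove the three parts in order, each time reducing the claim to facts already available: the probabilistic representation \eqref{eq3.2}, the consistency/compatibility lemmas (Lemma \ref{lem2.4}, Lemma \ref{lem2.5}, Corollary \ref{cor2.6}), and the standard Dirichlet-form estimates for orthogonal projections.

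\emph{Part (i).} First I would show $\mathcal B^n_b(U)\subset \DD^1_\rho(U)$. For bounded $u$ with $|u|\le M$, one has $\|(|u|-n)^+\|_{\DD^1_\rho(U)}=0$ for every $n\ge M$, since $(|u|-n)^+\equiv 0$ and $P_V(0)=0$; hence the defining limit is trivially zero and $u\in\DD^1_\rho(U)$. For the density statement, given $u\in\DD^1_\rho(U)$ consider the truncations $u_n=(-n)\vee u\wedge n\in\mathcal B^n_b(U)$. Then $|u-u_n|\le (|u|-n)^+$ pointwise, so $P_V(|u-u_n|)\le P_V((|u|-n)^+)$ for every $V$, and therefore $\|u-u_n\|_{\DD^1_\rho(U)}\le \|(|u|-n)^+\|_{\DD^1_\rho(U)}\to 0$ by the definition of $\DD^1_\rho(U)$. (A small point to record: one must check $\DD^1_\rho(U)$ is a vector space, i.e. closed under the relevant operations, so that $u-u_n$ makes sense as an element — this follows from subadditivity of $x\mapsto P_V(|x|)$ and the elementary inequality $(|u+v|-2n)^+\le (|u|-n)^++(|v|-n)^+$.)

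\emph{Part (ii).} Suppose $0\le\nu\in\RRr(U)$ and $|u|\le R^U\nu$ q.e. The key observation is that for any $V\in\OO_q(U)$, applying $\Pi_V$ (equivalently $P_V$) to the majorant and using Lemma \ref{lem2.4} (which requires $\nu\in\RRr(W)$, available here with $W=U$) gives
\[
P_V(R^U\nu)=R^U\nu-\Pi_V(R^U\nu)=R^U\nu-R^V\nu\le R^U\nu\quad\text{q.e.},
\]
and more usefully, pairing against $\rho\cdot m$ and using the symmetry of $r^U$ (so that $\int_U R^V\nu\,\rho\,dm=\int_U R^V\rho\,d\nu$, etc.),
\[
\int_U P_V(|u|)\,\rho\,dm\le \int_U P_V(R^U\nu)\,\rho\,dm=\int_U R^V\rho\,d\nu\le \int_U R^U\rho\,d\nu.
\]
Taking the supremum over $V$ gives $\|u\|_{\DD^1_\rho(U)}\le \int_U R^U\rho\,d\nu<\infty$ (finite since $\nu\in\RRr(U)$ and $R^U\rho$ is bounded on the relevant set, or by Fubini and $R^U\nu<\infty$ q.e.). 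For membership in $\DD^1(U)$ itself one must still produce the vanishing of $\|(|u|-n)^+\|_{\DD^1_\rho(U)}$: since $(|u|-n)^+\le (R^U\nu-n)^+$ and $(R^U\nu-n)^+\downarrow 0$ q.e. as $n\to\infty$ while $P_V$ is order-preserving, the same chain of inequalities with $\nu$ replaced by the measure associated to $(R^U\nu-n)^+$ — or, more simply, a direct dominated-convergence argument using $\sup_V P_V((R^U\nu-n)^+)\le R^U\nu$ and $R^U\nu\in L^1_\rho$ — gives the limit zero. This is the step where I expect the only real care is needed: controlling $(|u|-n)^+$ by something whose $\DD^1_\rho$-norm genuinely tends to zero, rather than merely being finite.

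\emph{Part (iii).} Let $u\in F(U)$. Then $u$ has a quasi-continuous version (Theorem 2.1.3 of \cite{FOT}), so $u\in qC(U)$. Since $u\in F(U)$, for any $V\in\OO_q(U)$ we have $u-\pi_V(u)=h_V(u)\in F(V)^\bot$ and $h_V(u)=u$ q.e. on $V^c$; thus $P_V(u)=h_V(u)$ q.e. Then, estimating in $L^1_\rho$ via Cauchy--Schwarz after writing $\int_U |P_V(u)|\rho\,dm = \int_U |h_V(u)|\,\rho\,dm$, and using that $h_V$ is an $\EE$-orthogonal projection (so $\|h_V(u)\|_F\le\|u\|_F$) together with the reproducing-kernel bound $\int_U |h_V(u)|\rho\,dm = \EE(h_V(u), R^U\rho)\le \|h_V(u)\|_F\,\|R^U\rho\|_F = \|h_V(u)\|_F\sqrt{(\rho,R^U\rho)}$ (here $\|R^U\rho\|_F^2=\EE(R^U\rho,R^U\rho)=(\rho,R^U\rho)$ by definition of the potential), we obtain
\[
\|P_V(u)\|_{L^1_\rho(U)}\le \|u\|_F\sqrt{(\rho,R^U\rho)}
\]
uniformly in $V$; taking the supremum gives $\|u\|_{\DD^1_\rho(U)}\le\|u\|_F\sqrt{(\rho,R^U\rho)}$. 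Applying this bound to $(|u|-n)^+$ in place of $u$ — which also lies in $F(U)$ by the Markov property of the form, with $\|(|u|-n)^+\|_F\to 0$ as $n\to\infty$ since $(|u|-n)^+\to 0$ in $F$ — yields $u\in\DD^1(U)$, hence $u\in\DD^{1,c}(U)$. The main obstacle across all three parts is the bookkeeping around quasi-everywhere statements and making sure the order-preservation of $P_V$ combined with Lemma \ref{lem2.4} is invoked under the correct integrability hypotheses; no deep new idea is required.
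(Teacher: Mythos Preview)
Your argument for (i) matches the paper's. The substantive issues are in (ii) and, to a lesser extent, (iii).

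\textbf{Part (ii).} First a computational slip: the displayed equality $\int_U P_V(R^U\nu)\,\rho\,dm=\int_U R^V\rho\,d\nu$ is false. By Lemma~\ref{lem2.4} and \eqref{eq3.1} one has $P_V(R^U\nu)=R^U\nu-R^V\nu$, so the left side equals $\int_U R^U\rho\,d\nu-\int R^V\rho\,d\nu$, not $\int_U R^V\rho\,d\nu$. Your final bound $\le\int_U R^U\rho\,d\nu$ is nonetheless correct.

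The real gap is in the membership argument. You propose to conclude $\|(|u|-n)^+\|_{\DD^1_\rho}\to0$ by dominated convergence from $\sup_V P_V((R^U\nu-n)^+)\le R^U\nu\in L^1_\rho$. But dominated convergence also requires pointwise convergence of the integrand, i.e.\ $\sup_{V}P_V((R^U\nu-n)^+)(x)\to0$ for a.e.\ $x$. Since $\sup_V P_V g(x)=\sup_{\tau\in\TT_U}\BE_x g(X_\tau)$, this amounts to the uniform integrability of $\{R^U\nu(X_\tau):\tau\in\TT_U\}$ under $P_x$---that is, that $R^U\nu(X)$ is of class (D). This is true (it is a potential: $R^U\nu(X_t)=\BE_x[A^\nu_{\tau_U}\mid\FF_t]-A^\nu_t$ with $A^\nu_{\tau_U}\in L^1(P_x)$ for q.e.\ $x$), but it is not a triviality and you do not supply it; the phrase ``$(R^U\nu-n)^+\downarrow0$ while $P_V$ is order-preserving'' does not deliver it, because the supremum over $V$ does not commute with the limit. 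The paper avoids this entirely by a different route: it picks closed sets $F_k\uparrow$ with $R^U(\mathbf1_{F_k}\nu)$ \emph{bounded} and $\mathbf1_{F_k}\cdot\nu\in F^*$ (the $0$-order version of \cite[Theorem~2.2.4]{FOT}), observes that $P_V\big(R^U\nu-R^U(\mathbf1_{F_k}\nu)\big)\le R^U(\mathbf1_{U\setminus F_k}\nu)$ uniformly in $V$ by Lemma~\ref{lem2.4}, and uses (i) on the bounded potentials $R^U(\mathbf1_{F_k}\nu)$. This gives the vanishing directly, without any class~(D) input.

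\textbf{Part (iii).} Two small issues. First, the $\DD^1_\rho$-norm involves $P_V(|u|)$, not $|P_V(u)|$; you should run the argument with $|u|\in F(U)$ from the start, so that $P_V(|u|)=h_V(|u|)\ge0$ and the absolute value disappears. Second, the identity $\int_U h_V(|u|)\rho\,dm=\EE(h_V(|u|),R^U\rho)$ presumes $R^U\rho\in F$, i.e.\ that $\rho\cdot m$ has finite $0$-order energy, which need not hold for every $\rho\in\Theta_U$; this is exactly what \cite[Lemma~5.1.1]{FOT} packages cleanly (it gives $\int|\tilde w|\,d\nu\le\|w\|_F\sqrt{(\nu,R^U\nu)}$ for $\nu$ of finite energy and extends by approximation). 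For membership, the paper takes a shorter path than yours: by \cite[Theorem~2.2.1]{FOT} every $u\in F(U)$ satisfies $|u|\le R^U\nu$ for some $\nu\in\RRr(U)$, so (iii) follows from (ii). Your alternative via $\|(|u|-n)^+\|_F\to0$ also works (since $|u|-|u|\wedge n\to0$ in $F$), once the norm inequality is secured.
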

\begin{proof}
(i) The first assertion  is obvious. As for the second one, if $u\in\DD^1(U)$, then $T_nu:=((-n)\vee u)\wedge n\in\BB^n_b(U)$ and $u_n\rightarrow u$ in $\DD^1_{\rho}$ as $n\rightarrow\infty$ since $|u-T_nu|=(|u|-n)^+$, $n\ge1$.\\
(ii) By the 0-order version of \cite[Theorem 2.2.4]{FOT} (see the comments following \cite[Corollary 2.2.2]{FOT}), there exists
an increasing sequence $\{F_n\}$ of closed subsets of $U$ such that
$\mbox{Cap}_{\EE_U}(K\setminus F_n)\to 0$ for any compact $K\subset U$,
$\mathbf1_{F_n}\cdot\nu\in F^*$ and $\|R^U(\mathbf1_{F_n}\cdot\nu)\|_\infty<\infty$, $n\ge1$.
Let $\rho\in \Theta_U$ be such that $\int_U(R^U\nu)\rho\,dm<\infty$.
By Lemma \ref{lem2.4}, $\Pi_V(R^U\mu)\ge 0$ for any quasi open set $V\subset U$ and
$\mu\in\SSr(U)$. By this and (\ref{eq3.1}),
\[
\int_UP_V(R^U\nu- R^U(\mathbf1_{F_n}\cdot\nu))\rho\,dm \le \int_U R^U(\mathbf1_{U\setminus F_n}\cdot\nu)\rho\,dm.
\]
The right-hand side of the above inequality tends to zero when  $n\rightarrow\infty$.
Hence
\[
\lim_{n\rightarrow\infty}\|R^U\nu-R^U(\mathbf1_{F_n}\nu)\|_{\DD^1_\rho(U)}=0.
\]
By the choice of $\{F_n\}$, we have $(R^U(\mathbf1_{F_n}\nu))\subset \BB^n_b(U)$ for $n\ge1$.
 By this and (i), $R^U(\mathbf1_{F_n}\nu)\in\DD^1_{\rho}(U)$, $n\ge1$.  Consequently,
$R^U\nu\in \DD^1_\rho(U)$, so $u\in\DD^1(U)$.
\\
(iii) That $F(U)\subset\DD^{1,c}(U)$ follows from part (ii) and the fact that by \cite[Theorem 2.2.1]{FOT}, for any $u\in F(U)$ there exists a nonnegative $\nu\in\RRr(U)$ such that
$|u|\le R^U\nu$. The asserted inequality follows from \cite[Lemma 5.1.1]{FOT}.
\end{proof}

\subsection{Equivalent definitions.}
The 
following theorem supplies the key to the   characterizations of $\DD^{1,c}$ mentioned in the introduction.

\begin{theorem}
\label{th4.3}
Let $U\in\mathcal O$ and  $\rho\in \Theta_U$.
\begin{enumerate}[\rm(i)]
\item If   $u\in\DD^{1,c}_{\rho}(U)$, then the family $\{u(X_\tau),\, \tau\in\mathcal T_U\}$
is uniformly integrable under the measure $P_x$ for q.e. $x\in U$ and under the
measure $P_{\rho\cdot m}$.
\item If $u\in qC(U)$ and $\{u(X_{\tau_V}),\, V\in\mathcal O_q(U)\}$  is uniformly integrable under the measure $P_{\rho\cdot m}$,
then  $u\in\DD^{1,c}_{\rho}(U)$.
\end{enumerate}
\end{theorem}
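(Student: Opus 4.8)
The plan is to read both parts through one elementary fact: a family $\{Y_i\}$ of nonnegative random variables on a probability space is uniformly integrable if and only if $\sup_i\mathbb{E}(Y_i-N)^+\to0$ as $N\to\infty$, which in turn, by the sandwich $(Y-N)^+\le Y\fch_{\{Y>N\}}\le 2(Y-N/2)^+$ valid for every $Y\ge0$, is equivalent to $\sup_i\mathbb{E}[Y_i\fch_{\{Y_i>N\}}]\to0$. Part (ii) is then nearly immediate: by \eqref{eq3.2} one has $\|(|u|-N)^+\|_{\DD^1_\rho(U)}=\sup_{V\in\OO_q(U)}\mathbb{E}_{\rho\cdot m}(|u(X_{\tau_V})|-N)^+$ for every $N\ge0$, so uniform integrability of $\{u(X_{\tau_V}):V\in\OO_q(U)\}$ under $P_{\rho\cdot m}$ yields first $\sup_V\mathbb{E}_{\rho\cdot m}|u(X_{\tau_V})|<\infty$ (whence $\|u\|_{\DD^1_\rho(U)}<\infty$) and then $\|(|u|-N)^+\|_{\DD^1_\rho(U)}\to0$, i.e. $u\in\DD^1_\rho(U)$; combined with the assumption $u\in qC(U)$ this gives $u\in\DD^{1,c}_\rho(U)$.

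For part (i) the first step is to upgrade \eqref{eq4.2} from bounded to arbitrary nonnegative quasi-continuous integrands: if $w\ge0$ is quasi-continuous, applying \eqref{eq4.2} to $w\wedge n$ and letting $n\to\infty$ by monotone convergence gives $\sup_{\tau\in\TT_U}\mathbb{E}_\nu w(X_\tau)=\sup_{V\in\OO_q(U)}\mathbb{E}_\nu w(X_{\tau_V})$, both for $\nu=\rho\cdot m$ and for $\nu=\delta_x$ for q.e. $x\in U$ (where the right-continuity of $t\mapsto w(X_t)$ needed to invoke the optional sampling result used in Proposition \ref{prop4.1} is available). Applying this with $w=(|u|-N/2)^+$, which lies in $\DD^{1,c}_\rho(U)$ since $u$ does and $z\mapsto(|z|-N/2)^+$ is continuous, and $\nu=\rho\cdot m$, Proposition \ref{prop4.1} identifies the right-hand side with $\|(|u|-N/2)^+\|_{\DD^1_\rho(U)}$; hence, by the sandwich above with $Y=|u(X_\tau)|$,
\[
\sup_{\tau\in\TT_U}\mathbb{E}_{\rho\cdot m}\big[|u(X_\tau)|\fch_{\{|u(X_\tau)|>N\}}\big]\le 2\,\|(|u|-N/2)^+\|_{\DD^1_\rho(U)}\longrightarrow0\quad(N\to\infty),
\]
which is exactly uniform integrability of $\{u(X_\tau):\tau\in\TT_U\}$ under $P_{\rho\cdot m}$.

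For the pointwise statement, set $g_N=(|u|-N)^+\in\DD^{1,c}_\rho(U)$ and $\phi_N=\sup_{V\in\OO_q(U)}P_V(g_N)$; by the first step $\phi_N(x)=\sup_{\tau\in\TT_U}\mathbb{E}_xg_N(X_\tau)$ for q.e. $x$, so, arguing as in the previous paragraph with $P_x$ in place of $P_{\rho\cdot m}$, it suffices to prove $\phi_N\to0$ q.e. The sequence $(\phi_N)$ is nonincreasing and, by Proposition \ref{prop4.1}, $\|\phi_N\|_{L^1_\rho(U;m)}=\|g_N\|_{\DD^1_\rho(U)}\to0$, so $\phi_N\to0$ $m$-a.e.; moreover each $\phi_N$ is (a version of) an excessive function for the part of $\BM$ on $U$, being the réduite of the quasi-continuous function $g_N$, its near-Borel measurability being covered by \cite[Theorem V.1.17]{BG}. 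The passage from $m$-a.e. to q.e. convergence rests on the absolute continuity condition \eqref{eq2.6}: an excessive function vanishing $m$-a.e. vanishes identically (apply the density of $P^D_t(x,\cdot)$ to get $P^D_tv\equiv0$, then let $t\downarrow0$), so one needs only to pass from the decreasing sequence of excessive functions $(\phi_N)$ to its limit, i.e. to verify that the excessive regularization of $\inf_N\phi_N$ — which is excessive and null $m$-a.e., hence identically zero — agrees with $\inf_N\phi_N$ quasi everywhere. I expect this last point to be the only genuinely delicate one, because a priori the decreasing limit of excessive functions is merely supermedian and its excessive regularization differs from it only on a negligible set that is a priori just semipolar; to control it one should exploit the special structure of the $\phi_N$ as réduites of quasi-continuous functions (making a quasi-Dini argument available) rather than an abstract regularization theorem. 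Everything else is bookkeeping around Proposition \ref{prop4.1}, formula \eqref{eq3.2}, and the elementary characterization of uniform integrability; should the direct q.e. argument prove awkward, an alternative is to treat first $u=R^U\nu$ with $\nu\in\RRr(U)$ (for which $t\mapsto R^U\nu(X_t)$ is a potential of class (D) for q.e. $x$ by the very definition of $\RRr(U)$) and then approximate a general element of $\DD^{1,c}_\rho(U)$ in $\|\cdot\|_{\DD^1_\rho(U)}$ via Lemma \ref{lem4.2}.
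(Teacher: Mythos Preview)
Your argument follows the same line as the paper's. Part (ii) is identical (uniform integrability of $\{u(X_{\tau_V})\}$ under $P_{\rho\cdot m}$ plus \eqref{eq3.2} gives $\|(|u|-N)^+\|_{\DD^1_\rho(U)}\to0$ directly). For part (i), both you and the paper set $\phi_N=w_N:=\sup_{V\in\OO_q(U)}P_V((|u|-N)^+)$, invoke Proposition \ref{prop4.1} to get $\|\phi_N\|_{L^1_\rho}\to0$ (hence $\phi_N\searrow0$ $m$-a.e.), record that each $\phi_N$ is $(P^U_t)$-excessive, and then upgrade the $m$-a.e.\ convergence to q.e. The paper also uses the identity $\phi_N(x)=\sup_{\tau\in\TT_U}\BE_x(|u|-N)^+(X_\tau)$ for unbounded $(|u|-N)^+$ without comment; your explicit monotone-convergence upgrade of \eqref{eq4.2} is a welcome clarification.

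The only substantive difference is tone at the final step. The paper dispatches the $m$-a.e.\ $\Rightarrow$ q.e.\ passage in one sentence, citing that finite-$m$-a.e.\ excessive functions are quasi-continuous (\cite[Theorem A.2.7, Theorem 4.6.1]{FOT}). You flag it as the delicate point and discuss supermedian regularization, semipolar exceptional sets, a quasi-Dini argument, and approximation via Lemma \ref{lem4.2}. Your concern is legitimate in the abstract but over-cautious here: for Hunt processes associated with symmetric regular Dirichlet forms, semipolar sets are polar (Hunt's hypothesis (H) holds), so the excessive regularization of $\inf_N\phi_N$---which vanishes identically by the absolute continuity condition \eqref{eq2.6}---agrees with $\inf_N\phi_N$ q.e.\ with no further work. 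There is no genuine gap; you have simply paused where the paper moves briskly.
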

\begin{proof}
By Proposition \ref{prop4.1},
\begin{equation}
\label{eq4.4}
\lim_{n\to \infty}\|\sup_{V\in\OO_q}P_V[(|u|-n)^+]
\|_{L^1_{\rho}(E; m)}=0.
\end{equation}
Let $w_n(x):=\sup_{V\in\OO_q}P_V[(|u|-n)^+](x)$. The above convergence implies that $w_n\searrow 0$ $m$-a.e. By \eqref{eq4.2},
\begin{equation}
\label{eq.dwkr}
w_n(x)=\sup_{\tau\in\mathcal T_U}\mathbb E_x\big((|u|-n)^+(X_\tau)\big),\quad x\in E.
\end{equation}
It follows that  $w_n$ is an $(P^U_t)$-excessive function.
Since finite $m$-a.e. excessive functions  are quasi continuous (see \cite[Theorem A.2.7, Theorem 4.6.1]{FOT}), we see that in fact  $w_n\searrow 0$ q.e.,
which shows that the family $\{u(X_\tau),\, \tau\in\mathcal T_U\}$
is uniformly integrable under the measure $P_x$ for q.e. $x\in U$.  That this family is uniformly integrable under  $P_{\rho\cdot m}$ is an easy
consequence of \eqref{eq4.4} and \eqref{eq.dwkr}. Conversely, if $\{u(X_{\tau_V}),\, V\in\mathcal O_q(U)\}$
is uniformly integrable under the measure $P_{\rho\cdot m}$, then
\[
\sup_{V\in\mathcal O_q} \mathbb E_{\rho\cdot m}P_V (|u|-n)^+\to 0,
\]
which means that $u\in\DD^{1,c}_\rho(D)$.
\end{proof}

Let $\rho\in\Theta_D$. Recall that by the de la Vall\'ee  theorem (see, e.g., \cite[Chapter II, Theorem 22]{DM}), a subset $\KK$ of $L^1_{\rho}(D; m)$ is uniformly integrable if and only if there exists $\varphi\in$\,FVP (see Introduction) such that $\sup_{u\in\KK}\|\varphi(|u|)\|_{L^1_\rho}<\infty$.

\begin{theorem}
\label{th4.5}
Let $u\in qC(D)$ and $\rho\in \Theta_D$. Then $u\in\DD^{1,c}_\rho(D)$ if and only if there exists $\varphi\in\mbox{\rm FVP}$  such that
\begin{equation}
\label{eq4.5}
\sup_{V\in \mathcal O_q(D)} \|P_V\varphi(|u|)\|_{L^1_{\rho}(D; m)}<\infty.
\end{equation}
\end{theorem}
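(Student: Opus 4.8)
The plan is to reduce Theorem~\ref{th4.5} to the de la Vall\'ee--Poussin characterization of uniform integrability quoted just above and to the probabilistic description of $\DD^{1,c}_\rho$ in Theorem~\ref{th4.3}. The key observation is that the de la Vall\'ee--Poussin function $\varphi$ does not commute with the Poisson operators $P_V$ (since $P_V$ is an averaging operator and $\varphi$ is convex, Jensen's inequality only gives one inequality), so one must work at the level of the random variables $\varphi(|u|)(X_{\tau_V})$ rather than at the level of the functions $P_V\varphi(|u|)$. First I would, using relation~\eqref{eq3.2}, rewrite condition~\eqref{eq4.5} as $\sup_{V\in\OO_q(D)}\mathbb E_{\rho\cdot m}\big[\varphi(|u|)(X_{\tau_V})\big]<\infty$, i.e.\ $\sup_{V\in\OO_q(D)}\|\varphi(|u|)(X_{\tau_V})\|_{L^1(P_{\rho\cdot m})}<\infty$. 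By the de la Vall\'ee--Poussin theorem this is equivalent (passing, if necessary, to a possibly different FVP function when going from a family to its uniform-integrability witness) to the statement that the family $\{\,\varphi(|u|)(X_{\tau_V}):V\in\OO_q(D)\,\}$ is uniformly integrable under $P_{\rho\cdot m}$, which is in turn equivalent to uniform integrability of $\{u(X_{\tau_V}):V\in\OO_q(D)\}$ under $P_{\rho\cdot m}$.

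For the ``if'' direction, assume~\eqref{eq4.5}. By the above chain of equivalences the family $\{u(X_{\tau_V}):V\in\OO_q(D)\}$ is uniformly integrable under $P_{\rho\cdot m}$, and since $u\in qC(D)$ by hypothesis, Theorem~\ref{th4.3}(ii) gives directly $u\in\DD^{1,c}_\rho(D)$. For the ``only if'' direction, assume $u\in\DD^{1,c}_\rho(D)$. By Theorem~\ref{th4.3}(i) the family $\{u(X_\tau):\tau\in\TT_D\}$ is uniformly integrable under $P_{\rho\cdot m}$; in particular so is the subfamily $\{u(X_{\tau_V}):V\in\OO_q(D)\}$. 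Hence by the de la Vall\'ee--Poussin theorem there is $\varphi\in\mathrm{FVP}$ with $\sup_{V}\mathbb E_{\rho\cdot m}\big[\varphi(|u|)(X_{\tau_V})\big]<\infty$, which by~\eqref{eq3.2} is exactly~\eqref{eq4.5}.

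The main obstacle is the interplay between the convex function $\varphi$ and the operators $P_V$: one cannot simply invoke ``$\sup_V\|P_V\varphi(|u|)\|_{L^1_\rho}<\infty \iff \sup_V\|P_V|u|\|_{L^1_\rho}<\infty$'' because Jensen's inequality is one-sided, so the argument must be routed through the random variables $\varphi(|u|)(X_{\tau_V})$ and the de la Vall\'ee--Poussin theorem must be applied in its ``family'' form (a set is u.i.\ iff some single FVP function bounds all its $\varphi$-moments), rather than to one function at a time. A secondary point to be careful about is that Theorem~\ref{th4.3} is stated for $U\in\OO$ and for the family $\OO_q(U)$, so the restriction to $D\in\OO$ in the present statement is exactly what is needed, and one should note that enlarging the test family from $\OO_q(D)$ to all of $\TT_D$ (used in the ``only if'' part) does not affect uniform integrability, by the optional-sampling identity~\eqref{eq4.2} already established in the proof of Proposition~\ref{prop4.1}.
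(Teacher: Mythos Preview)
Your proof is correct and follows the same route as the paper's: rewrite \eqref{eq4.5} via \eqref{eq3.2} as a uniform bound on $\mathbb E_{\rho\cdot m}\varphi(|u(X_{\tau_V})|)$, apply the de la Vall\'ee--Poussin theorem to pass between this and uniform integrability of $\{u(X_{\tau_V}):V\in\OO_q(D)\}$ under $P_{\rho\cdot m}$, and then invoke Theorem~\ref{th4.3} in each direction. One minor imprecision: in your first paragraph the intermediate claim that \eqref{eq4.5} is equivalent to uniform integrability of the family $\{\varphi(|u|)(X_{\tau_V})\}$ is not right (a uniform $L^1$ bound on $\varphi(|u|)(X_{\tau_V})$ is strictly weaker than u.i.\ of that composed family) and is in any case unnecessary---your second paragraph bypasses it and gives the clean argument, which is exactly the paper's.
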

\begin{proof}
If $u\in\DD^{1,c}_{\rho}(D)$, then by Theorem \ref{th4.3}(i) the family
$\KK:=\{u(X_{\tau_V}),\, V\in\mathcal O_q(D)\}$  is uniformly integrable under $P_{\rho\cdot m}$. Hence $\sup_{V\in\OO_q}\mathbb E_{\rho\cdot m}\varphi(|u(X_{\tau_V})|)<\infty$ for some $\varphi\in\mbox{FVP}$, which shows (\ref{eq4.5}) by (\ref{eq3.2}). Conversely, if (\ref{eq4.5}) is satisfied for some $\varphi\in\mbox{FVP}$, then by (\ref{eq3.2}) and the de la Vall\'ee--Poussin theorem, $\KK$  is uniformly integrable. Hence
$u\in\DD^{1,c}_{\rho}(D)$ by  Theorem \ref{th4.3}(ii).
\end{proof}

\begin{definition}
Let $(\GG_t)_{t\ge0}$ be a filtration. A $(\GG_t)_{t\ge0}$-adapted stochastic process
$Y$ is of class (D)  if the collection $\{Y_{\tau}: \tau$ a finite valued
$(\GG_t)_{t\ge0}$-stopping time\} is uniformly integrable.
\end{definition}

The name ``class (D)" was given by P. A. Meyer. According to \cite[p. 107]{P}, presumably he expected it to come to be known as ``Doob class" at some point, but it has stayed class (D).

\begin{corollary}
$u\in\DD^{1,c}(D)$  if and only if the process $ [0,\tau_D)\ni t\mapsto u(X_{t})$
is right continuous and  of class \mbox{\rm (D)} under the measure $P_x$ for q.e. $x\in D$.
\end{corollary}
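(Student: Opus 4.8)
The plan is to obtain this as a direct consequence of Theorem \ref{th4.3} by unwinding the union over $\rho\in\Theta_D$ that is hidden in the notation $\DD^{1,c}(D)=\DD^1(D)\cap qC(D)$ and $\DD^1(D)=\bigcup_{\rho\in\Theta_D}\DD^1_\rho(D)$. So first I would fix $u\in qC(D)$ and treat the two implications separately, always keeping in mind that ``of class (D) under $P_x$'' is, by the definition given just above together with \eqref{eq3.2}, exactly the statement that the family $\{u(X_\tau):\tau\in\TT_D\}$ is uniformly integrable under $P_x$ (finite-valued stopping times dominated by $\tau_D$; note $u(X_t)=0$ for $t\ge\tau_D$ by our convention $u(\partial)=0$, so there is no issue at the boundary).

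For the forward implication, suppose $u\in\DD^{1,c}(D)$. Then $u\in\DD^1_\rho(D)$ for some $\rho\in\Theta_D$, so Theorem \ref{th4.3}(i) immediately gives that $\{u(X_\tau):\tau\in\TT_D\}$ is uniformly integrable under $P_x$ for q.e. $x\in D$, which is the class (D) assertion. Right-continuity of $t\mapsto u(X_t)$ on $[0,\tau_D)$ for q.e. $x$ holds because $u$ is quasi-continuous: this is \cite[Theorem 4.2.2]{FOT} (already invoked at the end of the proof of Proposition \ref{prop4.1}), possibly combined with \cite[Theorem 4.1.3]{FOT} to upgrade ``a.e. $x$'' to ``q.e. $x$''. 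That settles one direction.

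For the converse, suppose $t\mapsto u(X_t)$ is right continuous and of class (D) under $P_x$ for q.e. $x\in D$; in particular $\{u(X_{\tau_V}):V\in\OO_q(D)\}$ is uniformly integrable under $P_x$ for q.e. $x$. The task is to produce a single weight $\rho\in\Theta_D$ for which this family is uniformly integrable under $P_{\rho\cdot m}$, for then Theorem \ref{th4.3}(ii) yields $u\in\DD^{1,c}_\rho(D)\subset\DD^{1,c}(D)$. The natural route is via the de la Vallée--Poussin theorem: for q.e. $x$ there is $\varphi_x\in\mathrm{FVP}$ with $\sup_{V}\BE_x\varphi_x(|u(X_{\tau_V})|)<\infty$, equivalently $\sup_V P_V\varphi_x(|u|)(x)<\infty$. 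I expect the cleanest argument to avoid a measurable selection of $\varphi_x$ altogether: instead, work with $w_k:=\sup_{V\in\OO_q(D)}P_V[(|u|-k)^+]=\sup_{\tau\in\TT_D}\BE_\cdot[(|u|-k)^+(X_\tau)]$ (the second equality is \eqref{eq4.2}), which is $(P^D_t)$-excessive hence quasi-continuous by \cite[Theorem A.2.7, Theorem 4.6.1]{FOT}, and pointwise decreasing in $k$; uniform integrability of $\{u(X_\tau):\tau\in\TT_D\}$ under $P_x$ says precisely $w_k(x)\downarrow 0$ for q.e. $x$. Choosing any $\rho_0\in\Theta_D$ and applying monotone convergence, $\|w_k\|_{L^1_{\rho_0}}\to 0$ if the $w_k$ were $\rho_0$-integrable; since $w_k\le w_1$ it suffices to have $w_1\in L^1_{\rho_0}$, which is where a judicious choice of $\rho_0$ enters.

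The main obstacle, then, is exactly this integrability/selection point: passing from ``for q.e. $x$ there is a dominating potential or de la Vallée--Poussin function'' to ``there is one weight $\rho$ making the whole family uniformly integrable in $L^1_{\rho\cdot m}$''. I would handle it by an exhaustion argument: since $m$ is Radon and $E$ is $\sigma$-compact, pick an increasing sequence of relatively compact quasi-open (or open) sets $D_j\uparrow D$ with $\overline{D_j}\subset D$; on each $D_j$ the excessive function $w_1$ is finite q.e., and one can choose $\rho$ supported suitably with $\rho\le c_j$ on $D_j$ and decaying fast enough that $\int_D w_1\,\rho\,dm=\sum_j\int_{D_j\setminus D_{j-1}}w_1\,\rho\,dm<\infty$; normalize to get $\rho\in\Theta_D$. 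More slickly, since $w_1$ is a nonnegative nearly Borel function finite q.e.\ hence $m$-a.e., the measure $w_1\cdot m$ is $\sigma$-finite, so there exists a strictly positive $\rho$ with $\|\rho\|_{L^1(D;m)}=1$ and $w_1\in L^1_\rho(D;m)$; then $\|w_k\|_{L^1_\rho}\downarrow 0$ by dominated convergence, which by \eqref{eq.dwkr} is the hypothesis of Theorem \ref{th4.3}(ii), giving $u\in\DD^{1,c}_\rho(D)\subset\DD^{1,c}(D)$ and completing the proof.
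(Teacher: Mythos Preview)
Your forward implication is correct and matches the paper's: Theorem \ref{th4.3}(i) for class (D), \cite[Theorem 4.2.2]{FOT} for right-continuity.

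For the converse, your argument via the excessive functions $w_k$ and the choice of $\rho$ (through $\sigma$-finiteness of $w_1\cdot m$, so that $w_1\in L^1_\rho$ and dominated convergence applies) is correct and is exactly the paper's route; the paper says only ``by the assumptions, \eqref{eq.dwkr} tends to zero q.e., which implies \eqref{eq4.4} for some $\rho\in\Theta_D$'', and you have supplied the detail it suppresses. There is, however, one genuine gap: you fix $u\in qC(D)$ at the outset, but in this direction quasi-continuity is part of the conclusion $u\in\DD^{1,c}(D)$, not a hypothesis. The paper addresses this first: right-continuity of $t\mapsto u(X_t)$ under $P_x$ for q.e.\ $x$ yields fine continuity of $u$ by \cite[Theorem A.2.7]{FOT}, and the class (D) property at the stopping time $\tau=0$ forces $|u(x)|<\infty$ q.e., whence \cite[Theorem 4.6.1]{FOT} gives $u\in qC(D)$. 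Once that step is inserted at the start of your converse argument (and before you appeal to Theorem \ref{th4.3}(ii), which itself requires $u\in qC(U)$), your proof is complete and coincides with the paper's.
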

\begin{proof}
Necessity follows readily from
Theorem \ref{th4.3}(i) and \cite[Theorem 4.2.2]{FOT}.
As for the sufficiency part, quasi-continuity of $u$ follows from \cite[Theorem A.2.7, Theorem 4.6.1]{FOT}. Furthermore, by the assumptions, \eqref{eq.dwkr} tends to zero q.e., which implies \eqref{eq4.4}
for some $\rho\in \Theta_D$.
\end{proof}

\subsection{Further properties.}

Let $\{u_n,u\}\subset\BB(D)$. We say that $\{u_n\}$ converges to $u$ {\em $\EE^D$-quasi uniformly} (resp. {\em $\EE^D$-quasi uniformly on compacts})
if for every $\varepsilon>0$ there exists a closed set $F_\varepsilon\subset D$ such that $\mbox{\rm Cap}_D(D\setminus F_\varepsilon)\le\varepsilon$
and $\sup_{x\in F_\varepsilon}|u_n(x)-u(x)|\to 0$. (resp. $\sup_{x\in F_\varepsilon\cap K}|u_n(x)-u(x)|\to 0$ for any compact $K\subset D$).

\begin{lemma}
\label{lem4.10}
Let $\{u_n\}\subset qC(D)$ be  such that $u_n\searrow 0$ q.e.
Then  $u_n\searrow 0$ $\EE^D$-quasi uniformly on compacts.
\end{lemma}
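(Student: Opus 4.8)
The plan is to make all the $u_n$ simultaneously continuous off a single relatively open set of small capacity, and then apply Dini's theorem on the resulting compact pieces. The two inputs are: (a) $\mathrm{Cap}_D$ is countably subadditive (it is a Choquet capacity on $D$, being the capacity associated with the regular Dirichlet form $(\EE^D,\mathfrak D(\EE^D))$), and (b) the definition of quasi-continuity, namely that for each $n$ and each $\delta>0$ there is a relatively open set $G\subset D$ with $\mathrm{Cap}_D(G)<\delta$ such that $u_n$ restricted to $D\setminus G$ is continuous in the subspace topology.

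Fix $\varepsilon>0$. Let $N\subset D$ be a Borel set with $\mathrm{Cap}_D(N)=0$ outside which $u_n(x)\searrow 0$, and choose a relatively open set $G_0\supset N$ with $\mathrm{Cap}_D(G_0)<\varepsilon/2$. For each $n\ge 1$ choose, using (b), a relatively open set $G_n\subset D$ with $\mathrm{Cap}_D(G_n)<\varepsilon\,2^{-n-1}$ and $u_n|_{D\setminus G_n}$ continuous. Set $G=\bigcup_{n\ge 0}G_n$, which is relatively open in $D$, and by (a)
\[
\mathrm{Cap}_D(G)\le \mathrm{Cap}_D(G_0)+\sum_{n\ge1}\mathrm{Cap}_D(G_n)\le \frac{\varepsilon}{2}+\frac{\varepsilon}{2}=\varepsilon .
\]
Put $F_\varepsilon=D\setminus G$; then $F_\varepsilon$ is relatively closed in $D$ and $\mathrm{Cap}_D(D\setminus F_\varepsilon)\le\varepsilon$. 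Crucially, $F_\varepsilon$ has been chosen independently of any compact set.

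Now fix an arbitrary compact $K\subset D$. The set $F_\varepsilon\cap K$ is compact (a closed subset of $K$). Since $F_\varepsilon\cap K\subset D\setminus G_n$ for every $n$, each $u_n$ is continuous on $F_\varepsilon\cap K$; and since $F_\varepsilon\cap K\subset D\setminus G_0\subset D\setminus N$, we have $u_n(x)\searrow 0$ for every $x\in F_\varepsilon\cap K$, so in particular $u_n\ge 0$ there. By Dini's theorem (a decreasing sequence of continuous functions on a compact space converging pointwise to a continuous limit converges uniformly),
\[
\sup_{x\in F_\varepsilon\cap K}|u_n(x)|=\sup_{x\in F_\varepsilon\cap K}u_n(x)\longrightarrow 0 .
\]
As $K$ was arbitrary, this is exactly the asserted $\EE^D$-quasi uniform convergence on compacts.

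The argument is essentially routine; the only points that need care are the order of quantifiers (the set $F_\varepsilon$ must not depend on $K$, which holds because the $G_n$ depend only on the $u_n$) and checking that $F_\varepsilon\cap K$ is compact and carries the subspace topology in which the $u_n$ are continuous. I do not expect a genuine obstacle here.
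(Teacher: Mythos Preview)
Your proof is correct and follows essentially the same approach as the paper: produce a single set of small capacity off which all the $u_n$ are simultaneously continuous, then apply Dini's theorem on its intersection with an arbitrary compact. The only cosmetic difference is that the paper invokes \cite[Theorem 2.1.2]{FOT} to obtain a common nest $\{F_k\}$ for the countable family $\{u_n\}$ in one stroke, whereas you construct it by hand via countable subadditivity of $\mathrm{Cap}_D$; the underlying idea is identical.
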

\begin{proof}
By \cite[Theorem 2.1.2]{FOT}, there exists an increasing family $\{F_k\}$ of closed subsets of $E$
such that $\mbox{Cap}(E\setminus F_k)\le 1/k$ and ${u_n}|_{F_k}$ is continuous for any $n,k\ge 1$. By Dini's theorem ${u_n}|_{F_k}\searrow 0$ uniformly on compacts.
From this one easily deduces the assertion.
\end{proof}

In \cite{LeJan} the above notion is considered but with respect to the  capacity defined, for some $\alpha>0$, by
$\mbox{Cap}_\alpha(A):=\mathbb E^D_{\rho\cdot m}e^{-\alpha\tau_A}$ for  $A\in\mathcal B^n(E)$.
By \cite[Theorem 4.2.5,Theorem 2.1.5]{FOT} and Lemma \ref{lem4.10}, if $U_n$ is a nonincreasing
sequence of quasi open sets such that $\mbox{Cap}_D(U_n)\searrow 0$, then $\mbox{Cap}_\alpha(U_n)\searrow 0$ for any $\alpha>0$. Conversely,
if $\mbox{Cap}_\alpha(U_n)\searrow 0$ for some (hence for any) $\alpha>0$, and $\mbox{Cap}_D(U_{n_0})<\infty$ for some $n_0\ge 0$, then  $\mbox{Cap}_D(U_n)\searrow 0$.

\begin{lemma}
\label{lem4.8}
Let $\{u_n\}\subset qC(D)$ be a sequence
such that for every $T\ge 0$,
\[
\sup_{0\le t\le \tau_D\wedge T}|u_n(X_t)|\to 0\quad P_{\rho\cdot m}\text{-a.s.}
\]
Then $u_n\to 0$ $\EE^D$-quasi uniformly on compacts.
\end{lemma}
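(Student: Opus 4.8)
The plan is to reduce the statement about pathwise convergence to the conclusion of Lemma~\ref{lem4.10}, by showing that the hypothesis forces a subsequence of $\{u_n\}$ (and then, by monotonicity-type arguments, a suitable modification of the whole sequence) to tend to $0$ quasi-everywhere. First I would recall that for $\mathfrak D(\EE^D)$-quasi notions the capacity $\mbox{Cap}_D$ is, up to the equivalence discussed right before this lemma, comparable with $\mbox{Cap}_\alpha(A)=\mathbb E^D_{\rho\cdot m}e^{-\alpha\tau_A}$; so it suffices to prove that $u_n\to 0$ with respect to $\mbox{Cap}_\alpha$-quasi uniform convergence, i.e. to control $\mathbb E^D_{\rho\cdot m}e^{-\alpha\tau_{A_n}}$ where $A_n=\{x\in D: |u_n(x)|>\varepsilon\}$ (a quasi open set, since each $u_n$ is quasi continuous).

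The key step is to estimate $\mbox{Cap}_\alpha(A_n)$ by a probabilistic first-hitting argument. If $X$ starts from $\rho\cdot m$ and hits $A_n$ before time $\tau_D$, then at the hitting time $\sigma_{A_n}:=\inf\{t\ge 0: X_t\in A_n\}$ we have $|u_n(X_{\sigma_{A_n}})|\ge\varepsilon$ (using quasi-continuity of $u_n$ and the fact that, outside an exceptional set, $X$ is in the fine closure of $A_n$ at $\sigma_{A_n}$, on which $|u_n|\ge\varepsilon$ q.e.). Hence for each fixed $T$,
\[
\{\sigma_{A_n}<\tau_D\wedge T\}\subset\Big\{\sup_{0\le t\le\tau_D\wedge T}|u_n(X_t)|\ge\varepsilon\Big\}
\]
up to a $P_{\rho\cdot m}$-null set. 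The hypothesis says the right-hand event has probability tending to $0$; therefore $P_{\rho\cdot m}(\sigma_{A_n}<\tau_D\wedge T)\to 0$, and splitting $\mathbb E^D_{\rho\cdot m}e^{-\alpha\sigma_{A_n}}\le e^{-\alpha T}+P_{\rho\cdot m}(\sigma_{A_n}<T)$ and noting $\{\sigma_{A_n}<\tau_D\}$ is what enters $\mbox{Cap}_\alpha$, we get $\limsup_n\mbox{Cap}_\alpha(A_n)\le e^{-\alpha T}$ for every $T$, hence $\mbox{Cap}_\alpha(A_n)\to 0$.

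From $\mbox{Cap}_\alpha(A_n^\varepsilon)\to 0$ for each $\varepsilon>0$ one extracts, by a diagonal choice of $\varepsilon=\varepsilon_k\downarrow 0$ and a rapidly converging subsequence, a single closed set $F_\delta\subset D$ with $\mbox{Cap}_D(D\setminus F_\delta)\le\delta$ on which $u_{n_j}\to 0$ uniformly on compacts (this is precisely the mechanism of the Borel–Cantelli/Egorov-type argument behind quasi uniform convergence; one uses the comparison $\mbox{Cap}_\alpha\leftrightarrow\mbox{Cap}_D$ recalled before the lemma to pass from $\mbox{Cap}_\alpha$-small sets to $\mbox{Cap}_D$-small closed sets, invoking \cite[Theorem 2.1.2, Theorem 2.1.5]{FOT}). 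Finally, to upgrade from a subsequence to the full sequence, I would combine quasi-continuity of each $u_n$ with Lemma~\ref{lem4.10}: on the exceptional-free part the convergence along the subsequence together with the q.e.\ finiteness gives convergence of the whole sequence, since any subsequence of $\{u_n\}$ satisfies the same hypothesis, so every subsequence has a further sub-subsequence converging to $0$ $\EE^D$-quasi uniformly on compacts, which forces the whole sequence to do so. The main obstacle is the hitting-time identification step: making rigorous that $|u_n(X_{\sigma_{A_n}})|\ge\varepsilon$ on $\{\sigma_{A_n}<\tau_D\}$ up to a null set, which requires the regularity of the fine topology and the quasi-continuity of $u_n$ (via \cite[Theorem 4.1.3, Theorem 4.2.5]{FOT}), and in particular handling the quasi open — rather than open — nature of the sets $A_n$.
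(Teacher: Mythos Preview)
Your approach is essentially the one the paper invokes: the paper's proof is a two-line citation to the argument preceding \cite[Theorem~1]{LeJan} together with the $\mbox{Cap}_\alpha$ versus $\mbox{Cap}_D$ comparison recorded just before the lemma, and your sketch unpacks precisely that route---show $\mbox{Cap}_\alpha(\{|u_n|>\varepsilon\})\to 0$ via a first-hitting estimate, then pass to $\mbox{Cap}_D$-quasi-uniform convergence on compacts.

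Two remarks on the details. First, the step you flag as the ``main obstacle'' is actually immediate: on $\{\sigma_{A_n}<\tau_D\wedge T\}$ the definition of the hitting time as an infimum already supplies times $t\in[\sigma_{A_n},\tau_D\wedge T)$ with $X_t\in A_n$, so $\sup_{t\le\tau_D\wedge T}|u_n(X_t)|>\varepsilon$ without ever evaluating $u_n$ at $X_{\sigma_{A_n}}$ itself; no fine-closure argument is needed. Second, the genuinely delicate passage is the one you treat briskly: the comparison stated before the lemma applies only to \emph{decreasing} sequences of quasi-open sets with one term of finite $\mbox{Cap}_D$, so going from $\mbox{Cap}_\alpha(A_n^\varepsilon)\to 0$ (non-monotone in $n$) to $\mbox{Cap}_D$-quasi-uniform convergence does require the manoeuvre you indicate---form decreasing envelopes $B_k=\bigcup_{j\ge k}A_{n_j}^{\varepsilon_j}$ along a rapidly chosen subsequence, check the finiteness hypothesis, and then run the sub-subsequence argument (which is legitimate here since every subsequence of $\{u_n\}$ inherits the pathwise hypothesis).
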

\begin{proof}
Follows from the comments preceding \cite[Theorem 1]{LeJan} and the comments preceding
the lemma.
\end{proof}

Note that, by  \cite[Remark 2.1]{KRS} applied to $u(X)$, for any $\alpha\in (0,1)$ and  $u\in \DD^1_{\rho,c}(D)$,
\begin{equation}
\label{eq4.6}
\mathbb E_{\rho\cdot m}\sup_{t\le \tau_D}|u(X_t)|^{\alpha}\le \frac{1}{1-\alpha}\|u\|^{\alpha}_{\DD^1_\rho(D)}.
\end{equation}

\begin{proposition}
\label{prop4.11}
The set  $\DD^{1,c}_{\rho}(D)$ with the norm
$\|\cdot\|_{\DD^1_\rho(D)}$ form a Banach space.
\end{proposition}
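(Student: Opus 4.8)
The plan is to verify the two defining properties of a Banach space: that $\|\cdot\|_{\DD^1_\rho(D)}$ is a genuine norm on $\DD^{1,c}_\rho(D)$, and that the space is complete. The norm axioms are mostly routine: positive homogeneity and the triangle inequality follow immediately from the definition $\|u\|_{\DD^1_\rho(D)}=\sup_{V\in\OO_q(D)}\|P_V(|u|)\|_{L^1_\rho(U;m)}$ together with the sublinearity of $|u+v|\le|u|+|v|$ and positivity of the kernels $P_V$. For definiteness, suppose $\|u\|_{\DD^1_\rho(D)}=0$. Taking $V=B(x,\varepsilon)$ and letting $\varepsilon\to 0$, the last inequality in Proposition~\ref{prop4.1} (namely $\|u\|_{L^1_\rho(U)}\le\|u\|_{\DD^1_\rho(U)}$) gives $u=0$ $m$-a.e.\ on $D$, and since $u$ is quasi-continuous and $m$ has full support this forces $u=0$ q.e.; combined with $u=0$ q.e.\ on $E\setminus D$ we get $u=0$ as an element of $\BB^n(E)$ modulo q.e.-equivalence.

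For completeness, I would take a Cauchy sequence $\{u_n\}\subset\DD^{1,c}_\rho(D)$. By the inequality $\|\cdot\|_{L^1_\rho(U)}\le\|\cdot\|_{\DD^1_\rho(U)}$ from Proposition~\ref{prop4.1}, $\{u_n\}$ is Cauchy in $L^1_\rho(D;m)$, hence converges in $L^1_\rho$ to some $u$, and along a subsequence $u_{n_k}\to u$ $m$-a.e. I then want a quasi-continuous representative and convergence q.e. Passing to a further subsequence so that $\|u_{n_{k+1}}-u_{n_k}\|_{\DD^1_\rho(D)}\le 2^{-k}$, I would use the maximal inequality \eqref{eq4.6} applied to $u_{n_{k+1}}-u_{n_k}$ to control $\mathbb E_{\rho\cdot m}\sup_{t\le\tau_D}|u_{n_{k+1}}(X_t)-u_{n_k}(X_t)|^{\alpha}$; by Borel--Cantelli the process $t\mapsto u_{n_k}(X_t)$ converges uniformly on $[0,\tau_D)$ $P_{\rho\cdot m}$-a.s., so by Lemma~\ref{lem4.8} the convergence $u_{n_k}\to u$ is $\EE^D$-quasi uniform on compacts; this yields that $u$ has a quasi-continuous version (uniform-on-compacts limits of the continuous restrictions $u_{n_k}|_{F_\varepsilon}$) and that $u_{n_k}\to u$ q.e.

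Once $u\in qC(D)$ with $u=0$ q.e.\ on $E\setminus D$ is established, I would show $u\in\DD^1_\rho(D)$ and $\|u_n-u\|_{\DD^1_\rho(D)}\to 0$ simultaneously. Fix $\varepsilon>0$ and pick $N$ with $\|u_n-u_m\|_{\DD^1_\rho(D)}<\varepsilon$ for $n,m\ge N$. For any $V\in\OO_q(D)$ and $m\ge N$, $\|P_V(|u_n-u_m|)\|_{L^1_\rho}<\varepsilon$; letting $m=n_k\to\infty$ along the subsequence, using $|u_{n_k}(X_{\tau_V})|\to|u(X_{\tau_V})|$ $P_{\rho\cdot m}$-a.s.\ (which follows from q.e.\ convergence together with the fact, from \eqref{eq2.6} and absolute continuity, that $P_{\rho\cdot m}(X_{\tau_V}\in N)=0$ for q.e.-polar $N$) and Fatou's lemma, I get $\|P_V(|u_n-u|)\|_{L^1_\rho}\le\varepsilon$ for all $n\ge N$, uniformly in $V$; taking the supremum over $V$ gives $\|u_n-u\|_{\DD^1_\rho(D)}\le\varepsilon$. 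In particular $\|u\|_{\DD^1_\rho(D)}\le\|u_N\|_{\DD^1_\rho(D)}+\varepsilon<\infty$. Finally, to confirm $u\in\DD^1_\rho(D)$ (not merely finite norm) I must check $\lim_{n\to\infty}\|(|u|-n)^+\|_{\DD^1_\rho(D)}=0$: from $(|u|-n)^+\le(|u_N|-n)^+ + |u-u_N|$ and subadditivity of the norm, $\|(|u|-n)^+\|_{\DD^1_\rho(D)}\le\|(|u_N|-n)^+\|_{\DD^1_\rho(D)}+\varepsilon$, and the first term tends to $0$ since $u_N\in\DD^1_\rho(D)$; as $\varepsilon$ is arbitrary, this gives membership.

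The main obstacle I anticipate is the passage from $L^1_\rho$-convergence (which is essentially free) to q.e.\ convergence and quasi-continuity of the limit, i.e.\ upgrading to a statement compatible with the definition of $\DD^1_\rho(D)$ as a space of nearly Borel functions defined up to q.e.-equivalence. This is exactly where the probabilistic maximal inequality \eqref{eq4.6} and Lemma~\ref{lem4.8} do the work: the $L^1_\rho$-topology alone does not see q.e.-null sets, so without the pathwise uniform control one cannot produce a quasi-continuous representative. The remaining care is bookkeeping about subsequences and making sure the Fatou argument for the $\DD^1$-norm is applied with the supremum over $V$ on the outside so that the bound is genuinely uniform.
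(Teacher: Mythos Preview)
Your proof is correct and follows essentially the same route as the paper's: both use the maximal inequality \eqref{eq4.6} to pass from $\DD^1_\rho$-Cauchyness to $P_{\rho\cdot m}$-a.s.\ uniform convergence of $t\mapsto u_{n_k}(X_t)$, then invoke Lemma~\ref{lem4.8} to obtain a quasi-continuous limit, and finally use Fatou's lemma to close the norm estimate. Your write-up is in fact more complete than the paper's---you verify the norm axioms and explicitly check the tail condition $\|(|u|-n)^+\|_{\DD^1_\rho}\to 0$ for the limit $u$, which the paper leaves implicit; the only quibble is that the fact $P_{\rho\cdot m}(X_{\tau_V}\in N)=0$ for polar $N$ comes from the diffuseness of $P_V(x,\cdot)$ (equivalently, \cite[Theorems 4.1.1, 4.2.1]{FOT}) rather than directly from \eqref{eq2.6}.
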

\begin{proof}
Suppose that $\{u_n\}\subset \DD^{1,c}_{\rho}(D)$ is a Cauchy sequence,  i.e. for any $\varepsilon>0$ there is $N_\varepsilon\ge1$ such that
\begin{equation}
\label{eq4.7}
\|u_n-u_m\|_{\DD^1_\rho(D)}\le \varepsilon,\quad n,m\ge N_\varepsilon.
\end{equation}
By \eqref{eq4.6},
\[
\BE_{\rho\cdot m}\sup_{t\le \tau_D}|u_n(X_t)-u_m(X_t)|^{1/2}\le 2\|u_n-u_m\|^{1/2}_{\DD^1_\rho(D)},\quad n,m\ge 1.
\]
By Lemma \ref{lem4.8},  $\{u_n\}$ is convergent (up to a subsequence) $\EE^D$-quasi uniformly on compacts.
Hence $u:=\lim_{n\to\infty} u_n$ q.e. is quasi continuous on $D$.
By \cite[Theorem 4.1.1, Theorem 4.2.1]{FOT}, for any $V\in\OO_q $, $u_n(X_{\tau_V})\to u(X_{\tau_V})$ $P_x$-a.s. for q.e. $x\in E$.
Therefore applying Fatou's lemma  we conclude from \eqref{eq4.7} that
$\|u_n-u\|_{\DD^1_\rho(D)}\le \varepsilon$ for $n\ge N_\varepsilon$,
which implies  the required result.
\end{proof}

We denote by  $P^\alpha_V$  the operator constructed in the same way as $P_V$
but for the Dirichlet form $\EE_\alpha:=\EE+\alpha(\cdot,\cdot)$. By \cite[Theorem 2.1.6]{FOT}, the families of quasi open sets corresponding to both
forms, i.e. $\EE$ and $\EE_\alpha$, coincide.
By \cite[Theorem 4.3.1]{FOT}, for any $u\in\mathcal B^+(E)$ and  $V\in\OO_q $ we have
\[
P^\alpha_V(u)=\mathbb E_x[e^{-\alpha\tau_V}u(X_{\tau_V})].
\]
For any $u\in \mathcal D^{1,c}_\rho(D)$ we let
\[
e^\alpha_{u}(x):=
\sup_{V\in\mathcal O_q(D)}\BE_{x} e^{-\alpha\tau_V}|u(X_{\tau_V})|,\quad x\in D.
\]

The following proposition shows that under an additional regularity assumption on the resolvent, in the definition of $\DD^1_{\rho}(D)$ one can replace the family $\OO_q$ by $\OO$.

\begin{proposition}
\label{prop4.8}
Assume that $R_1(C_b(D))\subset C_b(D)$. Then for every
$u\in{\DD^{1,c}_\rho}(D)$,
\[
\|u\|_{\DD^1_\rho(D)}=\|\sup_{V\in \mathcal O(D)}P_V(|u|)\|_{L^1_{\rho}(D; m)}
=\sup_{V\in\OO(D)}\|P_V(|u|)\|_{L^1_{\rho}(D; m)}.
\]
\end{proposition}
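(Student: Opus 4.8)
The plan is to reduce the claim to Proposition \ref{prop4.1} by showing that, under the hypothesis $R_1(C_b(D))\subset C_b(D)$, nothing is lost when the supremum defining $e_{|u|}$ runs over the smaller family $\OO(D)$ rather than $\OO_q(D)$. Write $m_q:=\sup_{V\in\OO_q(D)}P_V(|u|)$ and $m_o:=\sup_{V\in\OO(D)}P_V(|u|)$. Since $\OO\subset\OO_q$ we trivially have $m_o\le m_q$ q.e.\ and $\sup_{V\in\OO(D)}\|P_V(|u|)\|_{L^1_\rho}\le\|u\|_{\DD^1_\rho(D)}$ by definition. So the whole content is the reverse chain of inequalities
\[
\|u\|_{\DD^1_\rho(D)}\;=\;\|m_q\|_{L^1_\rho}\;\le\;\|m_o\|_{L^1_\rho}\;\le\;\sup_{V\in\OO(D)}\|P_V(|u|)\|_{L^1_\rho}\,,
\]
where the first equality is exactly Proposition \ref{prop4.1}. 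The last inequality will again follow from an optimal-stopping argument as in the proof of Proposition \ref{prop4.1}: one shows that the supremum $m_o$ is attained by a single open-set exit time. Thus the real work is the middle inequality $m_q\le m_o$ q.e., i.e.\ that the value of the optimal stopping problem over all stopping times dominated by $\tau_D$ is already achieved using exit times of \emph{open} sets.

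The key step is the following regularity claim: under $R_1(C_b(D))\subset C_b(D)$, the excessive functions that arise here are finely continuous in a strong enough sense that the optimal stopping set $V^*=\{w>|u|\}\cap D$ from \eqref{eq4.2} (with $w=m_q$, say for bounded $u$, and then passing to the limit as in the proof of Proposition \ref{prop4.1}) can be replaced by an open set without changing the value. Concretely, I would argue: the resolvent-regularity hypothesis implies that $P^D_t$ maps $C_b(D)$ into $C_b(D)$ (strong Feller-type property deduced from $R_1(C_b(D))\subset C_b(D)$ together with the absolute continuity condition \eqref{eq2.6}), hence that the $\alpha$-excessive regularizations $R^D_\alpha g$ of nonnegative bounded $g$ are continuous; by the general theory this forces every finite excessive function to be \emph{nearly} continuous and, more importantly, forces the process $X^D$ to have no "hidden" quasi-open obstacles — every quasi-open set differs from an open set by a set that the process, started q.e., enters only after time $0$ with probability $0$. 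More directly: approximate $w_n:=\sup_{V\in\OO_q}P_V[(|u|-n)^+]$, which is $(P^D_t)$-excessive and (by Proposition \ref{prop4.1} applied to the $n$-th truncation) continuous as an increasing limit of continuous excessive functions $R^D_\alpha$-regularizations; its fine support considerations then let one replace $V^*$ by an open neighbourhood on which the gap $w-|u|$ is bounded below, incurring an arbitrarily small error in $L^1_\rho$.

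The main obstacle — and the place I expect to spend most of the effort — is making precise the passage "quasi-open optimal stopping region $\Rightarrow$ open optimal stopping region." The clean way is probably: (i) first treat bounded continuous $u$, where $m_o$ is itself continuous and the equality $m_o=m_q$ can be read off from the fact that for continuous $u$ and continuous candidate value $w$ the set $\{w>u\}\cap D$ is \emph{open}; (ii) then handle bounded $u\in qC(D)$ by $\EE^D$-quasi-uniform approximation (Lemma \ref{lem4.10}, Lemma \ref{lem4.8}) by continuous functions, controlling $\|P_V(|u|-|u_k|)\|_{L^1_\rho}$ uniformly in $V$ via $\|u-u_k\|_{\DD^1_\rho(D)}\to0$ and the contraction estimate \eqref{eq4.6}; (iii) finally remove boundedness by the same Fatou/truncation scheme used at the end of the proof of Proposition \ref{prop4.1}, writing $u_n=|u|\wedge n$ and letting $n\to\infty$. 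Steps (ii)–(iii) are routine given the tools already assembled; the genuinely new input is step (i) plus the verification that $R_1(C_b(D))\subset C_b(D)$ is exactly what is needed to make the optimal stopping region open for continuous obstacles, which is where the continuity of $w=m_o$ (equivalently, of the Snell envelope of $u(X)$) must be established — e.g.\ via the representation $m_o=\lim_{\alpha\downarrow0}\alpha R^D_\alpha(\text{something continuous})$ or by a direct regularization argument using that $R^D_1$ of a continuous function is continuous and excessive.
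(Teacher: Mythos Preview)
Your overall strategy --- reduce to bounded continuous obstacles, show the value function is continuous so that the optimal stopping region $\{w>|u|\}$ is open, then approximate and pass to the limit --- is exactly the paper's route. But two of your technical steps, as written, do not go through, and the paper handles them differently.

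First, your step (ii) assumes you can approximate a bounded $u\in qC(D)$ by continuous $u_k$ with $\|u-u_k\|_{\DD^1_\rho(D)}\to 0$. No such result is available in the paper, and it is not clear how to get it: the $\DD^1_\rho$-norm involves a supremum over \emph{all} $V\in\OO_q$, and quasi-uniform convergence on compacts (Lemmas~\ref{lem4.10}, \ref{lem4.8}) does not control this. The paper instead works throughout with the $\alpha$-discounted quantities $e^\alpha_u(x)=\sup_{V}\BE_x e^{-\alpha\tau_V}|u(X_{\tau_V})|$ for $\alpha>0$ and invokes Le~Jan's theorem to approximate $u$ by $u_n\in C_c^+(D)$ in the norm $\BE_{\rho\cdot m}\sup_{t\le\tau_D}e^{-\alpha t}|\cdot|(X_t)$; this norm \emph{does} dominate the discounted analogues of all three quantities in the statement uniformly in $V$, which is what makes the passage to the limit work. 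Only at the very end does one let $\alpha\downarrow 0$. Your parenthetical remark about ``$\lim_{\alpha\downarrow 0}\alpha R^D_\alpha(\dots)$'' hints at this, but the discounting is not an afterthought --- it is the device that makes the approximation step legitimate.

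Second, your step (i) leaves the continuity of the value function for continuous $u$ as something that ``must be established'' and offers only vague mechanisms. The paper's argument here is concrete: for $u_n\in C_c^+(D)$ one penalizes, letting $w^n_k$ solve $-Lw^n_k+\alpha w^n_k=k(w^n_k-u_n)^-$; then $w^n_k=R^D_\alpha[k(w^n_k-u_n)^-]$, and the hypothesis $R_1(C_b(D))\subset C_b(D)$ (hence $R^D_\alpha(C_b(D))\subset C_b(D)$) gives $w^n_k\in C_b(D)$. Since $w^n_k\nearrow e^\alpha_{u_n}$ with a uniform rate on the compact support of $u_n$ (via Dini's theorem applied to $(w^n_k-u_n)^-\searrow 0$), one gets $e^\alpha_{u_n}\in C_b(D)$. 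Note also that your aside ``$R_1(C_b(D))\subset C_b(D)$ implies $P^D_t(C_b(D))\subset C_b(D)$'' is false in general and is not used; only resolvent continuity is needed, and only through the penalized equations.
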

\begin{proof}
Let $u\in\DD^{1,c}_\rho(D)$. Without loss of generality (by the very definition of the space $\DD^{1,c}_\rho(D)$)
we may and will assume that $u\in\mathcal B_b^+(D)$.
By \cite{LeJan}, for every $\alpha>0$ there exists a sequence $\{u_n\}\subset C_c^+(D)$
such that
\[
\BE_{\rho\cdot m}[\sup_{t\le \tau_D}e^{-\alpha t}|u-u_n|(X_t)]\to 0.
\]
This implies that
\[
\sup_{V\in\mathcal O_q(D)}\BE_{\rho\cdot m} e^{-\alpha\tau_V}|u_n(X_{\tau_V})|\to \sup_{V\in\mathcal O_q(D)}\BE_{\rho\cdot m} e^{-\alpha\tau_V}|u(X_{\tau_V})|,
\]
\[
\sup_{V\in\mathcal O(D)}\mathbb E_{\rho\cdot m} e^{-\alpha\tau_V}|u_n(X_{\tau_V})|\to \sup_{V\in\mathcal O(D)}\mathbb E_{\rho\cdot m} e^{-\alpha\tau_V}|u(X_{\tau_V})|
\]
and
\[
\int_D\sup_{V\in\mathcal O(D)}\mathbb E_xe^{-\alpha\tau_V}|u_n(X_{\tau_V})|\,\rho(x)\,m(dx)\to
\int_D\sup_{V\in\mathcal O(D)}\mathbb E_x e^{-\alpha\tau_V}|u(X_{\tau_V})|\rho(x)\,m(dx).
\]
Assume for a moment that  $e^\alpha_{u_n}\in C_b(D)$. Then
\begin{align*}
\sup_{V\in\mathcal O_q(D)}\BE_{\rho\cdot m} e^{-\alpha\tau_V}|u_n(X_{\tau_V})|&=
\BE_{\rho\cdot m} e^{-\alpha\tau_{V^*}}|u_n(X_{\tau_{V^*}})|=
\sup_{V\in\mathcal O(D)}\BE_{\rho\cdot m} e^{-\alpha\tau_V}|u_n(X_{\tau_V})|\\&
=\int_D\sup_{V\in\mathcal O(D)}\BE_xe^{-\alpha\tau_V}|u_n(X_{\tau_V})|\,\rho(x)\,m(dx).
\end{align*}
This when combined with the already proven  convergences  gives
\begin{align*}
\sup_{V\in\mathcal O_q(D)}\BE_{\rho\cdot m} e^{-\alpha\tau_V}|u(X_{\tau_V})|
&=\sup_{V\in\mathcal O(D)}\mathbb E_{\rho\cdot m} e^{-\alpha\tau_V}|u(X_{\tau_V})|\\
&=\int_D\sup_{V\in\mathcal O(D)}\mathbb E_xe^{-\alpha\tau_V}|u(X_{\tau_V})|\,\rho(x)\,m(dx)
\end{align*}
for  $\alpha>0$. From this one easily gets the desired  result.
What is left is to show that indeed $e^\alpha_{u_n}\in C_b(D)$, $n\ge 1$.
By \cite[Corollary 3.15, Remark 4.2]{K:SPA} and \cite[Theorem 4.4]{KR:ALEA},
$w^n_k(x)\nearrow e^\alpha_{u_n}(x)$ and
\begin{equation}
\label{eq4.8}
|w^n_k(x)-e^\alpha_{u_n}(x)|\le \sup_{\tau\le\tau_D}\mathbb E_xe^{-\alpha\tau}(w^n_k-u_n)^-(X_\tau),
\end{equation}
where $w_k^n\in L^1(D;m)$ is the unique solution (see Definition \ref{def5.2}) to
\[
-Lw_k^n+\alpha w^n_k=k(w^n_k-u_n)^-\quad\text{in } D,\quad w_k^n=0\quad\text{on } D^c.
\]
Since $R_1^D(C_b(D))\subset C_b(D)$ one easily deduce that $w^n_k\in C_b(D)$.
Therefore, by Dini's theorem $(w^n_k-u_n)^-\searrow 0$, as $k\to\infty$, uniformly on compact subsets of $D$.
Let  $K\subset D$ be a compact set that supports $u_n$. Then
\[
\begin{split}
\sup_{\tau\le\tau_D}\mathbb E_xe^{-\alpha\tau}(w^n_k-u_n)^-(X_\tau)&=
\sup_{\tau\le\tau_D}\mathbb E_xe^{-\alpha\tau}[\mathbf1_K(X_\tau)(w^n_k-u_n)^-(X_\tau)]\\&\le
\mathbb E_x\sup_{t\le \tau_D}e^{-\alpha t}[\mathbf1_K(X_t)(w^n_k-u_n)^-(X_t)]\le \sup_{x\in K}(w^n_k-u_n)^-(x).
\end{split}
\]
This together with \eqref{eq4.8} implies that $w^k_n\to u_n$, as $k\to \infty$, uniformly on compact subsets of $D$.
Since $w^k_n$ are continuous, we infer that $u_n$ is continuous as well.
\end{proof}

\section{Reconstruction formula by means of harmonic measures}
\label{sec5}

We recall that we assume the absolute continuity condition (\ref{eq2.6}). For a nonnegative Borel measure $\mu$ on $D$ we define $R^D\mu$ by (\ref{eq.not1}), and
and for a signed Borel measure $\nu$ on $D$ we set $R^D\nu(x)=R^D\nu^+(x)-R^D\nu^-(x)$
for $x\in D$ such that $R^D\nu^+(x)<\infty$ or $R^D\nu^-(x)<\infty$, and $R^D\nu(x)=0$ otherwise.

\begin{lemma}
\label{lem5.1}
Let $\mu\in\MMr_b(D)$. Then   $R^D|\mu|<\infty$ q.e. on $D$ and  $u=R^D\mu$ is a quasi-continuous function on $D$.
\end{lemma}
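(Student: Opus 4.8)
The statement has two assertions: that $R^D|\mu| < \infty$ q.e., and that $u = R^D\mu$ is quasi-continuous. The plan is to first reduce to the case $\mu \ge 0$ (since $R^D\mu = R^D\mu^+ - R^D\mu^-$ and $|\mu| = \mu^+ + \mu^-$, and each of $\mu^\pm$ lies in $\MMr_b(D)$), so it suffices to treat a nonnegative $\mu \in \MMr_b(D)$ and show $R^D\mu < \infty$ q.e. together with quasi-continuity of $R^D\mu$.

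\textbf{Finiteness q.e.} For the finiteness, I would recall that $\mu \in \MMr_b(D)$ in general need \emph{not} be smooth — it may have a concentrated part. However, the relevant point is the behaviour of $R^D\mu(x) = \int_D r^D(x,y)\,\mu(dy)$ for $x$ in $D$ minus an exceptional set. The natural approach is to integrate against a weight: pick $\rho \in \Theta_D$ and use the symmetry of $r^D$ together with Fubini to write
\[
\int_D R^D\mu(x)\,\rho(x)\,m(dx) = \int_D R^D\rho(y)\,\mu(dy).
\]
Since $R^D\rho = R^D_0\rho$ is the $0$-potential of the bounded compactly-supported-ish weight and the form is transient, $R^D\rho$ is a finite excessive function; in fact one can choose $\rho$ so that $R^D\rho$ is bounded (e.g. $\rho$ supported suitably, or invoking transience to get $R^D\rho \in L^\infty$ after truncation). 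Then the right-hand side is finite because $\mu$ has finite total variation, hence $R^D\mu \in L^1_\rho(D;m)$, so $R^D\mu < \infty$ $m$-a.e. To upgrade ``$m$-a.e.'' to ``q.e.'', I would use that $R^D\mu$ is (the decreasing limit / an increasing limit of) excessive functions: approximating $\mu$ by $\mathbf 1_{F_n}\cdot\mu$ along an increasing sequence of closed sets exhausting $D$ modulo a capacity-null set (as in the definition of $\SSr$ applied after splitting off the concentrated part, or more directly by a monotone exhaustion of $D$ by compacts $K_n$ and writing $\mu = \sup_n \mathbf 1_{K_n}\mu$), each $R^D(\mathbf 1_{K_n}\mu)$ is excessive and finite q.e., the increasing limit $R^D\mu$ is excessive, and a finite-$m$-a.e. excessive function is finite q.e. by \cite[Theorem A.2.7, Theorem 4.6.1]{FOT}.

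\textbf{Quasi-continuity.} For quasi-continuity, the cleanest route is again via excessive functions: $R^D\mu = \lim_n R^D(\mathbf 1_{K_n}\mu)$ is an increasing limit, and each $R^D(\mathbf 1_{K_n}\mu)$ is an excessive function of $\BM^D$; being finite $m$-a.e. (indeed bounded after the truncation we used above, or at least finite q.e.), each is quasi-continuous by \cite[Theorem A.2.7, Theorem 4.6.1]{FOT}. Since $R^D\mu$ is the increasing limit of quasi-continuous excessive functions and is itself finite q.e. and excessive, it is quasi-continuous by the same references (excessive + finite $m$-a.e. $\Rightarrow$ quasi-continuous). Alternatively, if one prefers to keep the concentrated part separate: write $\mu = \mu_c + \mu_d$ per \cite[Lemma 2.1]{FST}; $R^D\mu_d$ is quasi-continuous because $\mu_d \in \MMr_{0,b}(D) \subset \RRr(D)$ and potentials of smooth measures of finite potential are quasi-continuous, while $R^D\mu_c$ is handled by the excessive-function argument (its concentrated-on-an-exceptional-set nature does not obstruct q.e.-finiteness or quasi-continuity of its $0$-potential, since the relevant exceptional set contributes a capacity-null set). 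The main obstacle is the passage from $m$-a.e. to q.e.\ for the finiteness claim and the careful justification that $R^D\mu$ inherits excessiveness from the approximants; both hinge on the standard fact that increasing limits of excessive functions are excessive and that finite-$m$-a.e. excessive functions are quasi-continuous, so once the integrability estimate $\int R^D\mu\,\rho\,dm = \int R^D\rho\,d\mu < \infty$ is in place the rest is routine potential theory.
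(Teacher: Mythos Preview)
Your proposal is correct and lands on the same mechanism as the paper: show that $R^D\mu^{\pm}$ is excessive for $(P^D_t)$, then invoke \cite[Theorem A.2.7, Theorem 4.6.1]{FOT} to pass from ``excessive and finite $m$-a.e.'' to ``quasi-continuous''.

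Two differences are worth noting. First, for the q.e.\ finiteness the paper simply cites \cite[Proposition 3.2]{K:CVPDE}, whereas you give the self-contained duality argument $\int_D R^D|\mu|\,\rho\,dm=\int_D R^D\rho\,d|\mu|\le \|R^D\rho\|_\infty\,\|\mu\|_{TV}<\infty$ for a suitable $\rho$; this is a genuine gain in transparency and is exactly the idea behind the cited result. Second, for excessiveness the paper argues directly from the kernel: since $r^D(\cdot,y)$ is excessive for each $y$ (stated in the preliminaries), one has $\alpha R^D_\alpha R^D\mu^+(x)=\int_D \alpha R^D_\alpha r^D(x,y)\,\mu^+(dy)\nearrow R^D\mu^+(x)$ by monotone convergence. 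Your detour through $R^D(\mathbf 1_{K_n}\mu)$ is harmless but unnecessary: proving that each approximant is excessive requires precisely this kernel argument anyway, so you may as well apply it to $\mu^+$ in one step. The alternative route you sketch via the decomposition $\mu=\mu_c+\mu_d$ is also not needed---the kernel argument handles arbitrary bounded Borel $\mu$ uniformly, with no special treatment of the concentrated part.
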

\begin{proof}
For the proof that  $R^D|\mu|<\infty$ q.e. on $D$ see \cite[Proposition 3.2]{K:CVPDE}.
Since $\alpha R^D_{\alpha}R^D\mu^+(x)=\int_D\alpha R^D_{\alpha}r^D(x,y)\,\mu^{+}(dy)$ and
$r^D(\cdot,y)$ is excessive for each $y\in D$, applying \cite[Proposition II.(2.3)]{BG} and monotone convergence shows that $R^D\mu^{+}$ is excessive relative to $(P^D_t)_{t>0}$. Likewise, $R^D\mu^{-}$ is excessive. By this and \cite[Theorem A.2.7]{FOT} (or \cite[Proposition II.(4.2)]{BG}), $R^D\mu$ is finely continuous q.e.
Since we know that $R^D\mu$ is q.e. finite, it is quasi-continuous on $D$ by  \cite[Theorem 4.6.1]{FOT}.
\end{proof}

\begin{definition}
\label{def5.2}
Let $\mu\in\MMr_{b}(D)$. The function $u$ (defined q.e. on $E$) by
\[
u=R^D\mu\quad \mbox{q.e. on }D,\qquad u=0\quad\mbox{on }D^c,
\]
is called {\em integral solution} of (\ref{eq1.5}).
\end{definition}

From Lemma \ref{lem5.1} we know that for $\mu\in\MMr_{b}(D)$ the integral solution is well defined and is a quasi continuous function on $D$. We will also need the notion of probabilistic solution. Its definition requires some preparatory results.

In the remainder of this section we assume that
\begin{equation}
\label{eq5.4}
P_x(\tau_D<\infty)=1\quad\text{q.e. }x\in D.
\end{equation}
This condition holds e.g. provided that $D$ is relatively compact.

We say that a nondecreasing sequence $\{\tau_k\}$ of {\em stopping times} is a {\em reducing sequence} for a measurable function $u$
on $D$ if $\tau_k\nearrow  \tau_D\, P_x$-a.s. for q.e. $x\in D$ and
\[
\mathbb E^D_x\sup_{t\le\tau_k}|u(X_t)|<\infty,\quad k\ge 0,\quad\mbox{q.e. }x\in D.
\]

\begin{lemma}
\label{lm5.4}
Let $\mu\in\MMr_{b}(D)$ and $u=R^D\mu$, $w=R^D|\mu|$. Then $\{\tau_k\}$, where
\[
\tau_k=\inf\{t>0: w(X_t)>k\}\wedge\tau_D,\quad k\ge1,
\]
is a reducing sequence for $u$. Moreover, $P_x(\tau_k<\tau_D)\to 0$ for  q.e. $x\in D$.
\end{lemma}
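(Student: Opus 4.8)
\textbf{Proof proposal for Lemma \ref{lm5.4}.}

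The plan is to verify the two defining properties of a reducing sequence plus the auxiliary convergence $P_x(\tau_k<\tau_D)\to 0$, all of which follow from the fact that $w=R^D|\mu|$ is a $(P^D_t)$-excessive function that is finite q.e.\ (Lemma \ref{lem5.1}) together with assumption \eqref{eq5.4}. First I would record that $\tau_k$ is indeed a stopping time: since $w$ is excessive, hence nearly Borel and finely continuous q.e., the set $\{w>k\}$ is finely open (up to an exceptional set), so its hitting time $\tau_{\{w\le k\}^c}$ is a stopping time; intersecting with $\tau_D$ keeps this property. Clearly $\{\tau_k\}$ is nondecreasing in $k$.

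Next, the integrability bound. On the stochastic interval $[0,\tau_k)$ we have $w(X_{t-})\le k$ by left-continuity of $t\mapsto w(X_t)$ along the fine topology (more precisely, by quasi–left continuity of $\BM$ and the fact that $w(X_\cdot)$ is a right-continuous supermartingale under $P_x$ for q.e.\ $x$, see \cite[Theorem A.2.7]{FOT} and \cite[Theorem 4.2.2]{FOT}), so the only way $\sup_{t\le\tau_k}w(X_t)$ can exceed $k$ is through the single jump at time $\tau_k$. Hence $\sup_{t\le\tau_k}w(X_t)\le k + w(X_{\tau_k})$, and taking $P^D_x$-expectations,
\[
\mathbb E^D_x\sup_{t\le\tau_k}|u(X_t)|\le \mathbb E^D_x\sup_{t\le\tau_k}w(X_t)\le k+\mathbb E^D_x w(X_{\tau_k})\le k+w(x)<\infty\quad\text{q.e. }x\in D,
\]
where the last inequality uses the supermartingale property (optional stopping) $\mathbb E^D_x w(X_{\tau_k})\le w(x)$ and finiteness of $w$ q.e. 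Since $|u|\le w$ q.e., this is the required bound.

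It remains to show $\tau_k\nearrow\tau_D$ $P_x$-a.s.\ q.e., and $P_x(\tau_k<\tau_D)\to 0$ q.e. Let $\sigma=\lim_k\tau_k\le\tau_D$. On $\{\sigma<\tau_D\}$ we would have $w(X_{\tau_k})>k$ for all $k$ (by definition of $\tau_k$ as a hitting time of $\{w>k\}$, using that the path has not yet left $D$), so $w(X_{\tau_k-})\to\infty$; but the supermartingale $w(X_{\cdot\wedge\tau_D})$ has a.s.\ finite left limits on $[0,\tau_D)$, forcing $P_x(\sigma<\tau_D)=0$. Thus $\tau_k\nearrow\tau_D$ q.e. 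For the last claim, $\{\tau_k<\tau_D\}\downarrow\{\sigma<\tau_D\}$ as $k\to\infty$ (the events are decreasing because $\tau_k$ is nondecreasing and bounded by $\tau_D$), and we have just shown the limit event is $P_x$-null for q.e.\ $x$; continuity of measure from above then gives $P_x(\tau_k<\tau_D)\to 0$ q.e.

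The main obstacle is the control of $\sup_{t\le\tau_k}w(X_t)$: one must justify that the supremum overshoots the level $k$ only because of the terminal jump, which requires knowing that $t\mapsto w(X_t)$ has left limits and that these left limits stay $\le k$ strictly before $\tau_k$. This is exactly where excessiveness of $w$ (giving the right-continuous supermartingale structure with finite left limits, via \cite[Theorem 4.2.2]{FOT} and the optional stopping theorem for nonnegative supermartingales) does the work; once that is in place the rest is bookkeeping with \eqref{eq5.4}.
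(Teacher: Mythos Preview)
Your integrability bound $\mathbb E^D_x\sup_{t\le\tau_k}|u(X_t)|\le k+w(x)$ is exactly the paper's argument. For the convergence $P_x(\tau_k<\tau_D)\to 0$, however, you and the paper part ways: the paper invokes the capacity estimate $\mbox{Cap}_{\EE^D}(\{w>k\})\le k^{-1}\|\mu\|_{TV}$ from \cite[Lemma 2.4]{KR:NoD} together with \cite[Lemma 2.1.8, Theorem 4.2.1]{FOT}, whereas you argue purely from the supermartingale structure of $t\mapsto w(X_t)$. Your route is more self-contained (no external capacity lemma), and in fact yields the same quantitative bound via Doob's maximal inequality $P^D_x(\sup_t w(X_t)>k)\le w(x)/k$.

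There is one genuine slip. The set-theoretic claim ``$\{\tau_k<\tau_D\}\downarrow\{\sigma<\tau_D\}$'' is false: the decreasing limit $\bigcap_k\{\tau_k<\tau_D\}$ in general strictly contains $\{\sigma<\tau_D\}$, since one may have $\tau_k<\tau_D$ for every $k$ while $\tau_k\uparrow\sigma=\tau_D$. Consequently, ``$\tau_k\nearrow\tau_D$ a.s.'' does \emph{not} by itself imply $P_x(\tau_k<\tau_D)\to 0$ via continuity from above. The fix is immediate with your own ingredients: on $\bigcap_k\{\tau_k<\tau_D\}$ each $\tau_k$ is the genuine hitting time of $\{w>k\}$, so by right-continuity of the excessive function along the path $w(X_{\tau_k})\ge k$ for all $k$, whence $\sup_{t<\tau_D}w(X_t)=\infty$ there; but the nonnegative supermartingale $w(X)$ under $P^D_x$ has $P^D_x(\sup_t w(X_t)=\infty)=0$ (Doob's maximal inequality, or supermartingale convergence), which forces $P_x\big(\bigcap_k\{\tau_k<\tau_D\}\big)=0$ for q.e.\ $x$. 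With this correction your argument is complete.
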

\begin{proof}
Observe that
\[
\mathbb E_x \sup_{t\le \tau_k}|u(X_t)|\le \mathbb E_x \sup_{t\le \tau_k}w(X_t) \le k+\mathbb E_xw(X_{\tau_k}).
\]
On the other hand, by Fatou's lemma,
\[
\mathbb E_xw(X_{\tau_k})\le \liminf_{t\to\infty}\mathbb E_x w(X_{\tau_k\wedge t})\le \liminf_{t\to\infty}\mathbb E_x w(X_t)\le w(x)\quad\text{q.e.}
\]
We used here the fact that $w(X)$ is a supermartingale (see \cite[Theorem III.5.7]{BG}). Hence
\[
\mathbb E_x \sup_{t\le \tau_k}|u(X_t)|\le k+w(x)\quad\text{q.e.},
\]
which proves the first assertion of the lemma. By \cite[Lemma 2.4]{KR:NoD},
\[
\mbox{Cap}_{\EE^D}(w> k)\le k^{-1}\|\mu\|_{TV},\quad k\ge1,
\]
where $\mbox{Cap}_{\EE^D}$  denotes the $0$-order capacity introduced in \cite[page 74]{FOT}.
Hence, by \cite[Lemma 2.1.8, Theorem 4.2.1]{FOT}, $P_x(\tau_k<\tau_D)\to 0$ q.e. as $k\to\infty$. 
\end{proof}

\begin{definition}
A function $u\in qC(D)$ is called a {\em probabilistic solution} of \eqref{eq1.5} if
for q.e. $x\in D$ there exists a local martingale $M^x$ such that
\[
u(X_t)=u(x)-A^{\mu_d}_t+M^x_t,\quad x\in [0,\tau_D],\quad P^D_x\text{-a.s.}
\]
and  for any reducing sequence $(\tau_k)$ for $u$, $\mathbb E^D_xu(X_{\tau_k})\to R^D\mu_c(x)$ q.e.
\end{definition}

The notions of integral and probabilistic solutions are  equivalent. Namely, the following results was proved in \cite[Proposition 3.12]{K:CVPDE}:

\begin{proposition}
Let $\mu\in\MMr_b(D)$. Then $u$ is a probabilistic solution of \mbox{\rm(\ref{eq1.5})} if and only if it is its integral solution.
\end{proposition}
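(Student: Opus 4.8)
The plan is to prove the equivalence of integral and probabilistic solutions by reconstructing the defining decomposition of the probabilistic solution from the resolvent density representation, and conversely. Since the statement is quoted from \cite[Proposition 3.12]{K:CVPDE}, the purpose here is to give a self-contained derivation using the tools already set up. First I would treat the case $\mu\ge 0$ (the general signed case follows by splitting into $\mu^+$, $\mu^-$ and using linearity of $R^D$ together with the decomposition $\mu=\mu_c+\mu_d$ applied componentwise). So set $u=R^D\mu$, $w=R^D|\mu|=u$ in this case, and recall from Lemma \ref{lem5.1} that $u$ is quasi-continuous and $u<\infty$ q.e., so $u\in qC(D)$.

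The key step for the ``integral $\Rightarrow$ probabilistic'' direction is to produce the additive-functional/martingale decomposition. I would invoke the Revuz correspondence: since $\mu_d\in\MMr_{0,b}(D)$ is smooth, $A^{\mu_d}$ is a well-defined PCAF of $\BM^D$, and the standard Fukushima-type decomposition for potentials (the $0$-order version, using transience) gives that $t\mapsto u(X_t)=R^D\mu(X_t)$ admits, for q.e. starting point $x$, the representation $u(X_t)=u(x)-A^{\mu_d}_t - (A^{\mu_c}_t) + M^x_t$ on $[0,\tau_D]$; here one must be careful that $A^{\mu_c}$ need not be a genuine additive functional since $\mu_c$ is concentrated on an exceptional set, which is precisely why the $\mu_c$-contribution is captured instead through the limiting behaviour $\mathbb E^D_x u(X_{\tau_k})\to R^D\mu_c(x)$ along a reducing sequence. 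For the latter I would use Lemma \ref{lm5.4}: with $\tau_k=\inf\{t:w(X_t)>k\}\wedge\tau_D$ one has $\mathbb E^D_x u(X_{\tau_k})=\mathbb E^D_x w(X_{\tau_k})$, and by the strong Markov property $w(X_{\tau_k})$ relates to $R^D\mu$ evaluated along the process; combined with $P_x(\tau_k<\tau_D)\to 0$ and the supermartingale convergence for $w(X)$, one identifies the limit as the ``concentrated part'' $R^D\mu_c(x)$. That $M^x$ is a local martingale follows because $A^{\mu_d}$ is of bounded variation and continuous, while $u(X)$ minus a continuous bounded-variation process is, on each $[0,\tau_k]$, a uniformly integrable martingale by the potential property of $R^D$; then one verifies the stated limit holds for \emph{any} reducing sequence, not just the canonical one, by a uniform-integrability/monotone-class argument over the exhausting sequence of stopping times.

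For the converse ``probabilistic $\Rightarrow$ integral'' direction, I would argue by uniqueness: given a probabilistic solution $u$, the integral solution $v=R^D\mu$ is also a probabilistic solution by the first part, so $z:=u-v\in qC(D)$ satisfies $z(X_t)=z(x)+N^x_t$ on $[0,\tau_D]$ with $N^x$ a local martingale (the $A^{\mu_d}$ terms cancel), and $\mathbb E^D_x z(X_{\tau_k})\to 0$ q.e. along reducing sequences. Stopping $N^x$ along a reducing sequence for $z$, taking expectations, and passing to the limit forces $z(x)=0$ q.e., hence $u=v=R^D\mu$ q.e. on $D$; together with $u=0$ on $D^c$ this is exactly the integral solution.

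The main obstacle will be the careful treatment of the concentrated part $\mu_c$: because $A^{\mu_c}$ is not an honest PCAF (it lives on an exceptional set), one cannot simply write $u(X_t)=u(x)-A^{\mu}_t+M^x_t$, and the decomposition has to be set up on each piece $[0,\tau_k]$ where things are finite, with the $\mu_c$-effect appearing only in the boundary limit $\mathbb E^D_x u(X_{\tau_k})\to R^D\mu_c(x)$. Making this rigorous requires the structure results for $\mu_c$ (references \cite{BGO}, \cite{KR:BPAN}) and the reducing-sequence machinery of Lemma \ref{lm5.4}, and checking that the limit is independent of the chosen reducing sequence. Everything else — the Revuz duality, the supermartingale property of $w(X)$, quasi-continuity — is already available in the excerpt.
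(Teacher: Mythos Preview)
The paper does not give its own proof of this proposition; it simply records it as a consequence of \cite[Proposition 3.12]{K:CVPDE}. So there is nothing to compare against at the level of argument, and the question is whether your sketch stands on its own.

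Your overall architecture is right: split $\mu=\mu_d+\mu_c$, use the Revuz correspondence for $\mu_d$ to produce $A^{\mu_d}$ and a local martingale, and capture the effect of $\mu_c$ through the limit $\mathbb E^D_x u(X_{\tau_k})\to R^D\mu_c(x)$; the uniqueness argument for the converse direction is also correct. However, two points in the forward direction are not yet under control. First, you invoke ``the standard Fukushima-type decomposition for potentials'', but the Fukushima decomposition as stated in \cite{FOT} applies to elements of $F$, and $u=R^D\mu$ need not lie in $F$ when $\mu\in\MMr_b(D)\setminus F^*$; one has to work instead with the decomposition of $R^D\mu_d(X)$ via the additive functional $A^{\mu_d}$ directly (which is available since $\mu_d$ is smooth and $R^D|\mu_d|<\infty$ q.e.), and treat $R^D\mu_c(X)$ separately. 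Second, and more importantly, your identification of the limit $\mathbb E^D_x u(X_{\tau_k})\to R^D\mu_c(x)$ via ``supermartingale convergence for $w(X)$'' is not enough: supermartingale convergence only yields an inequality. The missing ingredient is that $R^D\mu_c(X)$ is a genuine \emph{local martingale} under $P^D_x$ for q.e.\ $x$, because $\mu_c$ is carried by a polar set which the process a.s.\ never visits (so $r^D(\cdot,y)(X)$ is a local martingale for $\mu_c$-a.e.\ $y$), and the reducing sequence of Lemma~\ref{lm5.4} localizes it since $R^D\mu_c\le w\le k$ on $[0,\tau_k)$. With this in hand one gets $\mathbb E^D_x R^D\mu_c(X_{\tau_k})=R^D\mu_c(x)$ for every $k$, while $\mathbb E^D_x R^D\mu_d(X_{\tau_k})=\mathbb E^D_x[A^{\mu_d}_{\tau_D}-A^{\mu_d}_{\tau_k}]\to0$; summing gives the required limit, and the independence of the reducing sequence follows from the same local-martingale identification. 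Once you make this step explicit, the sketch becomes a proof.
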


\begin{lemma}
\label{lm5.6}
Suppose that $u$ is a nonnegative probabilistic  solution of \eqref{eq1.5}. Let $(\tau_k)$
be a reducing sequence for $u$. Then for any $n\ge1$ and  $\nu\in \mathcal P$ such that $R^D\nu$ is bounded,  we have
\[
\mathbb E_\nu [(u-n)^+(X_{\tau_k})]\to (R^D\nu,\mu_c)\quad\text{as}\quad k\to\infty.
\]
\end{lemma}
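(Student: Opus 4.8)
The goal is to show that $\mathbb{E}_\nu[(u-n)^+(X_{\tau_k})] \to (R^D\nu, \mu_c)$ as $k \to \infty$, where $u = R^D\mu$ with $\mu \ge 0$, $(\tau_k)$ is a reducing sequence, and $R^D\nu$ is bounded. The natural strategy is to relate $(u-n)^+$ to the potential of another measure via a projection/sweeping argument, and then identify the limit using the decomposition $\mu = \mu_d + \mu_c$ together with the defining property of probabilistic solutions (namely $\mathbb{E}^D_x u(X_{\tau_k}) \to R^D\mu_c(x)$ q.e.).

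First I would fix $n$ and consider the function $u^n := (u-n)^+$. The first key step is to show that $\mathbb{E}_\nu[u^n(X_{\tau_k})]$ does not depend on the particular reducing sequence in the limit, and to rewrite it in a form amenable to taking $k \to \infty$. Since $u$ is a probabilistic solution, $u(X_t) = u(x) - A^{\mu_d}_t + M^x_t$ on $[0,\tau_D]$, and $u$ itself satisfies $\mathbb{E}^D_x u(X_{\tau_k}) \to R^D\mu_c(x)$. The difficulty is that $u^n = (u-n)^+$ is not itself a potential $R^D\sigma$ for an obvious measure $\sigma$; however, by the theory of the obstacle problem (or equivalently by Itô–Tanaka applied to the semimartingale $u(X)$), $u^n(X_t) = u^n(x) - A^{\sigma_d}_t + \tilde M^x_t + (\text{the increasing Tanaka-type term } \int_0^t \mathbf{1}_{\{u(X_s) > n\}}\,dA^{\mu_d}_s)$, so that $u^n$ is the probabilistic solution of $-Lw = \sigma$ in $D$ with $\sigma^+ = \mathbf{1}_{\{u>n\}}\cdot\mu_d + \mu_c$ restricted appropriately, plus a boundary (Tanaka local time at level $n$) contribution. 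The concentrated part of the measure driving $u^n$ is exactly $\mu_c$ (since $\mu_c$ lives on an exceptional set, where $u = R^D\mu$ is q.e. infinite or at least $> n$ for $n$ fixed — more carefully, $u^n$ and $u$ have the same concentrated part because subtracting a constant and taking positive part is a local operation that does not affect behaviour on the exceptional support of $\mu_c$).

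The cleanest route, which I would actually carry out, is: approximate. Write $u = R^D\mu$ and split $\mu = \mu_d + \mu_c$; let $\mu_d^j \nearrow \mu_d$ with $\mu_d^j \in \mathcal{R}(D)$ bounded potentials and $u_j := R^D(\mu_d^j + \mu_c)$. For the regular pieces one has the standard identity $\mathbb{E}_\nu[(R^D\mu_d^j - n)^+(X_{\tau_k})] \to (R^D\nu, (\text{concentrated part})) = 0$ since $R^D\mu_d^j$ is a regular potential with no concentrated part, whence by the reducing-sequence property and quasi-continuity the limit is $(R^D\nu, 0)$; meanwhile the $\mu_c$ contribution survives because $R^D\mu_c$ is "infinite" on the support of $\mu_c$, so $(R^D\mu_c - n)^+$ and $R^D\mu_c$ agree on the relevant exceptional set and $\mathbb{E}^D_x R^D\mu_c(X_{\tau_k}) \to R^D\mu_c(x)$ is precisely the probabilistic-solution property. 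Passing $j \to \infty$ using monotone convergence and the boundedness of $R^D\nu$ (to control $(R^D\nu, \mu_c) < \infty$ and dominate), together with uniform integrability of $\{u(X_{\tau_k})\}$ — which follows because $u = R^D\mu$ and $R^D\mu^+, R^D\mu^-$ are excessive, hence $u(X)$ is of class (D) up to $\tau_D$ by the supermartingale property used already in Lemma~\ref{lm5.4} — gives the result.

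The main obstacle I expect is the bookkeeping in the second step: rigorously justifying that taking the positive part $(u-n)^+$ does not alter the concentrated part of the "source measure," and handling the Tanaka local-time term at level $n$, which is an additional positive additive functional that must be shown to be *smooth* (diffuse) and hence contribute $0$ to the q.e.\ limit $\mathbb{E}^D_x(\cdot)(X_{\tau_k})$ — this is where one genuinely uses that $u$ is quasi-continuous so that $\{u = n\}$ carries no concentrated mass and the level-$n$ occupation term is a PCAF in Revuz duality with a diffuse measure. I would package this via the approximation argument above precisely to sidestep a direct Itô–Tanaka computation, since for the bounded regular potentials $R^D\mu_d^j$ the statement reduces to the already-available fact that their integral solutions have vanishing concentrated part, and the limit interchange is then controlled by monotonicity and the boundedness hypothesis on $R^D\nu$.
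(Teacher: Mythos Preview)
Your first sketch---recognizing that $u_n:=(u-n)^+$ is itself a probabilistic solution of an equation $-Lw=\mu_n$ for some $\mu_n\in\MMr_b(D)$, and that the task reduces to proving $(\mu_n)_c=\mu_c$---is exactly the route the paper takes (it cites \cite[Proposition 6.2]{K:CVPDE} for the existence of $\mu_n$). You should not have abandoned it.

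The ``cleanest route'' you propose instead has two genuine gaps. First, the map $v\mapsto(v-n)^+$ is nonlinear, so you cannot split $(u-n)^+$ into $(R^D\mu_d^j-n)^+$ plus a contribution from $R^D\mu_c$; the quantity $\mathbb{E}_\nu[(R^D\mu_d^j-n)^+(X_{\tau_k})]$ you write down is not related to $\mathbb{E}_\nu[(u-n)^+(X_{\tau_k})]$ in any usable way, and passing $j\to\infty$ does not recover the correct object. Second---and more seriously---your claim that $\{u(X_{\tau_k})\}$ is uniformly integrable because $R^D\mu^{\pm}$ are excessive is \emph{false} whenever $\mu_c\ne 0$. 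This is precisely the content of Theorem~\ref{th5.8}: $u\in\DD^{1,c}(D)$ (equivalently, $u(X)$ is of class (D)) if and only if $\mu$ is diffuse. Excessive functions yield nonnegative supermartingales, but nonnegative supermartingales need not be of class (D); the concentrated part is exactly the obstruction.

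The paper's argument for $(\mu_n)_c=\mu_c$ is short and avoids both a direct It\^o--Tanaka computation and any approximation. From $u_n\le u$ and a comparison principle (\cite[Theorem 6.1]{K:CVPDE}) one gets $(\mu_n)_c\le\mu_c$. For the reverse inequality, take the specific reducing sequence of Lemma~\ref{lm5.4}, which satisfies $P_x(\tau_k<\tau_D)\to0$; then
\[
R^D[(\mu_n)_c](x)=\lim_{k\to\infty}\mathbb{E}^D_x[(u-n)^+(X_{\tau_k})]
\ge\lim_{k\to\infty}\mathbb{E}^D_x[u(X_{\tau_k})]-n\lim_{k\to\infty}P_x(\tau_k<\tau_D)=R^D\mu_c(x),
\]
so $(\mu_n)_c=\mu_c$. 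Once this is known, for an arbitrary reducing sequence one writes $u_n(x)=\mathbb{E}_x u_n(X_{\tau_k})+\mathbb{E}_x A^{(\mu_n)_d}_{\tau_k}$, integrates against $\nu$, and passes to the limit by dominated convergence using $|\mathbb{E}_x A^{(\mu_n)_d}_{\tau_k}|\le R^D|(\mu_n)_d|(x)$ and the boundedness of $R^D\nu$.
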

\begin{proof}
Let $(\tau_k)$ be a localizing sequence for $u$ such that $P_x(\tau_k<\tau_D)\to 0$ q.e. as $k\to \infty$ (see Lemma \ref{lm5.4}).
Set $u_n=(u-n)^+$. By \cite[Proposition 6.2]{K:CVPDE} there exists  $\mu_n\in \MMr_b(D)$ such that
\begin{equation}
\label{eq5.1}
-Lu_n=\mu_n\quad\text{in }D,\quad u=0\text{ on } D^c.
\end{equation}
Clearly $u_n\le u$, so by \cite[Theorem 6.1]{K:CVPDE}, $(\mu_n)_c\le \mu_c$. On the other hand, by the very definition of a probabilistic solution of \eqref{eq1.5} and \eqref{eq5.1} we have
\[
\mathbb E_xu(X_{\tau_k})\to R^D\mu_c(x),\qquad \mathbb E_x(u-n)^+(X_{\tau_k})\to R^D[(\mu_n)_c](x)\quad \text{q.e. }x\in D.
\]
Consequently, for q.e. $x\in D$,
\[
\begin{split}
R^D[(\mu_n)_c](x) &=\lim_{k\to \infty } \mathbb E^D_x[(u-n)^+(X_{\tau_k})]
\ge \lim_{k\to \infty }  \mathbb E^D_x[(u-n)(X_{\tau_k})]\\&
=\lim_{k\to \infty }  \mathbb E^D_xu(X_{\tau_k})-n\lim_{k\to \infty }P_x(\tau_k<\tau_D)=R^D\mu_c(x).
\end{split}
\]
As a result, $R^D\mu_c=R^D[(\mu_n)_c] $ q.e., and hence $\mu_c=(\mu_n)_c$. Thus, in fact, for q.e. $x\in D$ we have
\[
u_n(x)=R^D\mu_c(x)+R^D(\mu_n)_d(x).
\]
Let $(\tau_k)$ be a reducing sequence for $u_n$.  By the definition of a probabilistic solution of \eqref{eq5.1},
\[
u_n(x)=\mathbb E_xu_n(X_{\tau_k})+\mathbb E_xA^{(\mu_n)_d}_{\tau_k}
\]
for q.e. $x\in D$. This implies that for any $\nu$ as in the formulation of the lemma we have
\[
\int_Du_n\,\nu(dx)=\mathbb E_\nu u_n(X_{\tau_k})+\mathbb E_\nu A^{(\mu_n)_d}_{\tau_k}.
\]
Note that $|\mathbb E_x A^{(\mu_n)_d}_{\tau_k}|\le \mathbb E_x A^{|(\mu_n)_d|}_{\tau_D}=R^D|(\mu_n)_d|(x)$ for q.e. $x\in D$.
Therefore applying the  dominated convergence theorem one easily shows the desired result.
\end{proof}

\begin{corollary}
\label{cor5.7}
Suppose that $u$ is a  probabilistic solution of \eqref{eq1.1}. Let $(\tau_k)$
be a reducing sequence for $u$. Then for any $n\ge1$ and any $\nu\in \mathcal P$ such that $R^D\nu$ is bounded  we have
\[
\mathbb E_\nu [(u-n)^+(X_{\tau_k})]\to (R^D\nu,\mu^+_c),\qquad \mathbb E_\nu [(u+n)^-(X_{\tau_k})]\to (R^D\nu,\mu^-_c) \quad \text{as}\quad k\to\infty.
\]
\end{corollary}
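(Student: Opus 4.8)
The plan is to deduce the statement from Lemma \ref{lm5.6} applied to the positive and negative parts of $u$. First I would record the elementary truncation identities: since $n\ge 1>0$,
\[
(u-n)^+=(u^+-n)^+\qquad\text{and}\qquad (u+n)^-=(u^--n)^+ ,
\]
together with the observation that, because $|u^{\pm}|\le|u|$, every reducing sequence for $u$ — in particular the given $(\tau_k)$ — is also a reducing sequence for $u^+$ and for $u^-$. Hence it suffices to prove $\mathbb E_\nu[(u^+-n)^+(X_{\tau_k})]\to(R^D\nu,\mu^+_c)$ and $\mathbb E_\nu[(u^--n)^+(X_{\tau_k})]\to(R^D\nu,\mu^-_c)$. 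Moreover, since $-u$ is the probabilistic solution of $-L(-u)=-\mu$ and $u^-=(-u)^+$ with $((-\mu)_c)^+=(-\mu_c)^+=\mu^-_c$, only the statement for $u^+$ has to be proved.

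The main step is to show that $u^+$ is a nonnegative probabilistic solution of $-Lv=\sigma$ for some $\sigma\in\MMr_b(D)$ whose concentrated part equals $\mu^+_c$. That $u^+=(R^D\mu)^+$ is again a probabilistic solution of an equation with bounded measure data follows as in \cite[Proposition 6.2]{K:CVPDE} (the case of \eqref{eq1.5} with threshold $0$ in place of $n$); it remains to identify $\sigma_c$. One inclusion is soft: since $\mu_c$ and $\mu_d$ are carried by disjoint sets (the first $\EE^D$-exceptional, the second charging no exceptional set), the Hahn decomposition of $\mu$ is compatible with the splitting $\mu=\mu_c+\mu_d$, so $(\mu^+)_c=\mu^+_c$; as $u^+\le R^D\mu^+$, the comparison theorem \cite[Theorem 6.1]{K:CVPDE} gives $\sigma_c\le(\mu^+)_c=\mu^+_c$. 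For the reverse inequality I would use a Kato-type inequality $-Lu^+\ge\mathbf 1_{\{u>0\}}(-Lu)$, which yields $\sigma_c\ge\mathbf 1_{\{u>0\}}\mu_c$, and then invoke the structural description of $\mu_c$ from \cite{BGO,KR:BPAN} — that $\mu^+_c$ is carried by $\{u=+\infty\}$ and $\mu^-_c$ by $\{u=-\infty\}$ — to conclude $\mathbf 1_{\{u>0\}}\mu_c=\mu^+_c$, hence $\sigma_c\ge\mu^+_c$ and therefore $\sigma_c=\mu^+_c$.

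With this in hand the argument closes quickly: applying Lemma \ref{lm5.6} to the nonnegative probabilistic solution $u^+$ of $-Lv=\sigma$, with $\sigma_c=\mu^+_c$, and to the reducing sequence $(\tau_k)$ gives $\mathbb E_\nu[(u^+-n)^+(X_{\tau_k})]\to(R^D\nu,\sigma_c)=(R^D\nu,\mu^+_c)$, and likewise for $u^-$; the truncation identities then produce the two asserted limits. I expect the only real difficulty to be the identification $\sigma_c=\mu^+_c$ — that is, the fact that taking the positive part commutes with extracting the concentrated part of the data — which is precisely where the Kato inequality and the structure of $\mu_c$ are needed; everything else is the bookkeeping of truncations, reducing sequences, and the dominated-convergence step already carried out in the proof of Lemma \ref{lm5.6} (alternatively, one could bypass $u^\pm$ and repeat that proof directly for $(u-n)^+$, showing that its associated measure $\mu_n$ satisfies $(\mu_n)_c=\mu^+_c$ for $n\ge1$).
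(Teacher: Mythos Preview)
Your overall strategy is correct and is essentially what the paper has in mind: the corollary carries no proof because it is meant to follow from Lemma \ref{lm5.6} once one knows that $u^{+}$ (resp.\ $u^{-}$) is itself a probabilistic solution whose concentrated datum is $\mu^{+}_c$ (resp.\ $\mu^{-}_c$); in the paper this very identification is invoked again in the proof of Theorem \ref{th6.2}, where it is attributed to \cite[Proposition~3.7]{K:NoD}. Your truncation identities, the remark that $(\tau_k)$ also reduces $u^{\pm}$, and the final application of Lemma \ref{lm5.6} are all fine.

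The one place that needs tightening is the reverse inequality $\sigma_c\ge\mu^{+}_c$. Your Kato-type step yields $\sigma\ge\mathbf 1_{\{u>0\}}\mu$, but the set $\{u>0\}$ is defined only q.e.\ while $\mu_c$ is carried precisely by an $\EE^D$-exceptional set, so the product $\mathbf 1_{\{u>0\}}\mu_c$ is not well defined without fixing a specific everywhere-defined representative and arguing further (and with the paper's convention $R^D\mu(x):=0$ when $R^D\mu^{+}(x)=R^D\mu^{-}(x)=\infty$, the set $\{u=+\infty\}$ is even empty). A cleaner route uses only the comparison result \cite[Theorem~6.1]{K:CVPDE} that you already cite for the upper bound: from $0\le u^{+}$ and $u\le u^{+}$ one gets $0\le\sigma_c$ and $\mu_c\le\sigma_c$; for any nonnegative measure dominating $\mu_c$ one has, with $A$ a Hahn positive set for $\mu_c$,
\[
\sigma_c(E)\ \ge\ \sigma_c(E\cap A)\ \ge\ \mu_c(E\cap A)\ =\ \mu^{+}_c(E)\qquad\text{for every Borel }E,
\]
so $\sigma_c\ge\mu^{+}_c$. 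Combined with your upper bound $\sigma_c\le(\mu^{+})_c=\mu^{+}_c$ this pins down $\sigma_c=\mu^{+}_c$ without Kato inequalities or the fine structure of $\mu_c$.
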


 The next theorem specifies  how the behaviour of the solution $u$ of (\ref{eq1.5}) on the set where $u$ i very large is related to the concentrated part
of $\mu$. One can call it the reconstruction formula for $\mu_c$. Another formula of this type will be given in Theorem \ref{th6.2}.
\begin{theorem}
\label{th5.8}
Let $\mu\in\MMr_b(D)$ and $u$ be the integral  solution of \mbox{\rm(\ref{eq1.5})}. Then
for any $\rho\in\Theta_D$ such that $R^D\rho$ is bounded we have
\[
\lim_{n\rightarrow\infty}\|(|u|-n)^+\|_{\DD^1_\rho(D)}=\int_D R^D\rho\,d|\mu_c|.
\]
As a result, $\mu$ is diffuse if and only if $u\in\DD^{1,c}(D)$.
\end{theorem}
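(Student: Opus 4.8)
The plan is to reduce everything to the nonnegative case and then compute the limit of $\|(u-n)^+\|_{\DD^1_\rho(D)}$ using the supremum-inside-the-integral identity of Proposition \ref{prop4.1} together with the reconstruction identities for the concentrated part of the measure established in Lemma \ref{lm5.6} and Corollary \ref{cor5.7}. First I would write $\mu=\mu^+-\mu^-$ and $u=R^D\mu^+-R^D\mu^-=:u^+_{\text{sol}}-u^-_{\text{sol}}$; since $|\mu_c|=\mu_c^++\mu_c^-$ and, on the relevant level sets, the behaviour of $|u|$ is governed separately by the two one-signed solutions, it suffices to treat $u=R^D\mu$ with $\mu\ge 0$ and show $\lim_n\|(u-n)^+\|_{\DD^1_\rho(D)}=\int_D R^D\rho\,d\mu_c$, the general statement following by combining the two signs (taking a little care that $(|u|-n)^+\le (u^+_{\text{sol}}-n)^++(u^-_{\text{sol}}-n)^+$ and conversely, so that the limits add up correctly). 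Note that $R^D\rho$ bounded guarantees $\rho\in\Theta_D$ is an admissible weight for all the potential-theoretic estimates, and that $R^D|\mu|<\infty$ q.e. by Lemma \ref{lem5.1}, so $u$ is a genuine quasi-continuous integral (equivalently probabilistic) solution.

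The core computation proceeds as follows. By Proposition \ref{prop4.1} and formula \eqref{eq4.2} in its proof,
\[
\|(u-n)^+\|_{\DD^1_\rho(D)}=\sup_{V\in\OO_q(D)}\mathbb E_{\rho\cdot m}\big[(u-n)^+(X_{\tau_V})\big]
=\mathbb E_{\rho\cdot m}\big[(u-n)^+(X_{\tau_{V^*_n}})\big],
\]
where $V^*_n$ is the optimal quasi-open set. The key claim is that this optimal stopping value equals $\lim_{k\to\infty}\mathbb E_{\rho\cdot m}[(u-n)^+(X_{\tau_k})]$ for a reducing sequence $(\tau_k)$ as in Lemma \ref{lm5.4}: indeed $(u-n)^+(X_t)=R^D\mu_c(X_t)+\bigl(R^D(\mu_n)_d(X_t)+\text{martingale}\bigr)$ in the notation of Lemma \ref{lm5.6}, so $(u-n)^+(X)$ is a supermartingale up to $\tau_D$, hence its supremum over stopping times $\tau\le\tau_D$ is attained in the limit along $\tau_k\nearrow\tau_D$ by optional sampling together with the uniform integrability coming from the bound $(u-n)^+\le u=R^D\mu$ and $\mathbb E_xw(X_{\tau_k})\le w(x)$. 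Then Lemma \ref{lm5.6} (with $\nu=\rho\cdot m$, which is legitimate since $R^D\rho$ is bounded) gives $\mathbb E_{\rho\cdot m}[(u-n)^+(X_{\tau_k})]\to (R^D\rho,\mu_c)$ as $k\to\infty$, and this value is independent of $n$. To finish I must show $\|(u-n)^+\|_{\DD^1_\rho(D)}=(R^D\rho,\mu_c)$ exactly for every $n$, or at least that the limit in $n$ is $(R^D\rho,\mu_c)$; the inequality $\le$ follows from the supremum being an optimal-stopping value dominated by the martingale-plus-potential decomposition, and $\ge$ follows by choosing $V$ equal to $\{u>n\}$-type sets and applying Corollary \ref{cor5.7} / Lemma \ref{lm5.6} in the limit. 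One then takes $n\to\infty$; monotonicity of $n\mapsto\|(u-n)^+\|_{\DD^1_\rho(D)}$ and the fact that each term equals $(R^D\rho,\mu_c)$ collapse the limit to $(R^D\rho,\mu_c)=\int_D R^D\rho\,d\mu_c$.

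For the final ``as a result'' clause: if $\mu$ is diffuse then $\mu_c=0$, so by the displayed formula $\lim_n\|(|u|-n)^+\|_{\DD^1_\rho(D)}=0$ for the chosen $\rho$, hence $u\in\DD^{1,c}_\rho(D)\subset\DD^{1,c}(D)$ (quasi-continuity of $u$ is Lemma \ref{lem5.1}). Conversely, if $u\in\DD^{1,c}(D)$ then $u\in\DD^{1,c}_{\rho'}(D)$ for some $\rho'\in\Theta_D$; one must pass to a weight $\rho$ with $R^D\rho$ bounded to apply the formula. This is handled by a truncation/comparison argument: one can find $\rho$ with $R^D\rho$ bounded and $\rho\le C\rho'$ on a large quasi-open set (e.g. $\rho$ supported where $R^D\rho'$ is bounded, suitably normalized), deduce $\lim_n\|(|u|-n)^+\|_{\DD^1_\rho(D)}=0$ from the assumption, hence $\int_D R^D\rho\,d|\mu_c|=0$; since $R^D\rho>0$ q.e. on $D$ and $|\mu_c|$ is smooth, this forces $|\mu_c|=0$, i.e. $\mu$ is diffuse. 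I expect the main obstacle to be the passage between the optimal-stopping supremum $\sup_{V\in\OO_q(D)}\mathbb E_{\rho\cdot m}[(u-n)^+(X_{\tau_V})]$ and the reducing-sequence limit $\lim_k\mathbb E_{\rho\cdot m}[(u-n)^+(X_{\tau_k})]$ — i.e. justifying that the supremum over all quasi-open $V$ is exactly the terminal value of the supermartingale $(u-n)^+(X)$ under $P_{\rho\cdot m}$ — which requires the optional sampling theorem for potentials plus careful use of the uniform integrability provided by the reducing sequence and the boundedness of $R^D\rho$.
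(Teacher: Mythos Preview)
Your lower-bound argument (choose $V=V_k:=\{R^D|\mu|<k\}$, which is quasi-open, and invoke Corollary~\ref{cor5.7} with $\nu=\rho\cdot m$) is exactly what the paper does, and your treatment of the ``as a result'' clause is fine, indeed a bit more careful than the paper's one-line dismissal.

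The genuine gap is in your upper bound. The claim that $(u-n)^+(X)$ is a supermartingale is false: for $u=R^D\mu$ with $\mu\ge 0$, Tanaka--Meyer gives
\[
(u-n)^+(X_t)=(u-n)^+(X_0)+\int_0^t\mathbf 1_{\{u(X_{s-})>n\}}\,dM_s+\tfrac12 L^n_t+(\text{jump compensator}),
\]
so $(u-n)^+(X)$ is a \emph{local submartingale}, not a supermartingale (equivalently, in the decomposition $(u-n)^+=R^D\mu_c+R^D(\mu_n)_d$ from Lemma~\ref{lm5.6} the diffuse part $(\mu_n)_d$ is typically \emph{signed}, with a negative ``surface'' piece on $\{u=n\}$). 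And even as a local submartingale it is not a true one --- take $u=G_D(\cdot,0)$ on a ball and check that $t\mapsto\BE_x[(u-n)^+(X_t)]$ decreases to $0$ --- so optional sampling does not hand you $\sup_V\BE_{\rho\cdot m}[(u-n)^+(X_{\tau_V})]=\lim_k\BE_{\rho\cdot m}[(u-n)^+(X_{\tau_k})]$. In fact that identity is generally false for fixed $n$ when $\mu_d\neq 0$: the crude domination $(u-n)^+\le u=R^D\mu_c+R^D\mu_d$ only yields $\sup_V\le (R^D\rho,\mu_c)+(R^D\rho,\mu_d)$, and the second term does not vanish.

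What the paper does instead for the upper bound is purely pointwise: it invokes \cite[Proposition~6.2,~(6.1)]{K:CVPDE} to get
\[
(|u|-n)^+\le R^D|\mu_c|+R^D\big(\mathbf 1_{\{|u|>n\}}|\mu_d|\big)\quad\text{q.e.},
\]
and then applies Lemma~\ref{lem4.2}(ii) (using that $R^D|\mu_c|$ is excessive, hence $P_V$-decreasing) to obtain $\|(|u|-n)^+\|_{\DD^1_\rho}\le (R^D\rho,|\mu_c|)+(R^D\rho,\mathbf 1_{\{|u|>n\}}|\mu_d|)$; the second term tends to $0$ by dominated convergence since $R^D\rho$ is bounded and $|\mu_d|$ is finite. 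This sharp pointwise bound --- with $\mathbf 1_{\{|u|>n\}}|\mu_d|$ rather than $|\mu_d|$ --- is the missing ingredient in your plan. Note also that this makes your reduction to the one-signed case unnecessary: the paper works directly with $|u|$ via Corollary~\ref{cor5.7} and the identity $(|u|-n)^+=(u-n)^++(u+n)^-$, avoiding the ``and conversely'' step, for which no pointwise reverse inequality exists.
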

\begin{proof}
Fix $\rho$ as in the formulation of the theorem. By Corollary \ref{cor5.7} and Lemma \ref{lm5.4},
\[
\mathbb E_{\rho\cdot m}(|u|-n)^+(X_{\tau_k})\to (R^D\rho,\mu_c),
\]
where $\tau_k=\tau_{V_k}$ with $V_k:=\{R^D|\mu|<k\}$. The set $V_k$ is quasi open since  $R^D|\mu|$ is quasi continuous.
Therefore, by the definition of the norm $\|\cdot \|_{\DD^1_\rho}$, we have
\begin{equation}
\label{eq5.2}
\|(|u|-n)^+ \|_{\DD^1_\rho(D)}\ge (R^D\rho,|\mu_c|).
\end{equation}
On the other hand, by Corollary \ref{cor5.7} and \cite[Proposition 6.2, (6.1)]{K:CVPDE},
\[
(|u|-n)^+\le R^D|\mu_c|+R^D(\mathbf1_{\{|u|>n\}}|\mu_d|)\quad\text{q.e.}
\]
By Lemma \ref{lem4.2}(ii),
\begin{equation}
\label{eq5.3}
\|(|u|-n)^+ \|_{\DD^1_\rho(D)}\le (R^D\rho,|\mu_c|)+(R^D\rho,\mathbf1_{\{|u|>n\}}|\mu_d|).
\end{equation}
From (\ref{eq5.2}) and (\ref{eq5.3}) one easily concludes the desired convergence.
The second assertion of the theorem is an immediate consequence of the first one.
\end{proof}

\section{Reconstruction formula via carr\'e du champ operator and jump measure}
\label{sec6}

Our aim in this section is to recover $\mu_c$ from the  energy of $u$.
As in Section \ref{sec5} we assume that (\ref{eq5.4}) is satisfied.
We  start with a useful lemma.

\begin{lemma}
\label{lem6.1}
For any $x,y\ge 0$ and $f\in C_b(\mathbb R)$ we have
\[
\int_0^\infty\Big[(x-a)^+-(y-a)^+-\mathbf1_{\{y>a\}}(x-y)\Big]f(a)\,da
=(x-y)^2{\sigma}(f;x,y),
\]
where
\[
\sigma(f;x,y)=\int_0^1\!\!\int_0^1\alpha f(\alpha\beta(x-y)+y)\,d\alpha\,d\beta.
\]
Furthermore, if $f_n\equiv \mathbf1_{[n,2n]}$, then
\[
(x-y)^2{\sigma}(f_n;x,y)= \frac12\big(S_n(x)-S_n(y)\big)\big(2x-S_n(x)-S_n(y)\big),\quad x,y\ge 0,
\]
where $S_n(z)= \max\{\min\{z,2n\},n\}$, $z\ge 0$.
\end{lemma}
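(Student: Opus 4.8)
The plan is to establish the general identity first, using only the fundamental theorem of calculus and Fubini's theorem, and then to obtain the second formula by specializing to $f_n=\mathbf1_{[n,2n]}$ and reducing the left-hand side to a one-dimensional integral.

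For the general identity I would start from the observation that $\partial_t(t-a)^+=\mathbf1_{\{t>a\}}$ for a.e.\ $t$, so that $(x-a)^+-(y-a)^+=\int_y^x\mathbf1_{\{t>a\}}\,dt$ for all $x,y\ge0$ (with the convention $\int_y^x=-\int_x^y$ when $x<y$), and likewise $\mathbf1_{\{y>a\}}(x-y)=\int_y^x\mathbf1_{\{y>a\}}\,dt$; hence the bracketed expression equals $\int_y^x\big(\mathbf1_{\{t>a\}}-\mathbf1_{\{y>a\}}\big)\,dt$. Multiplying by $f(a)$, integrating in $a$ over $(0,\infty)$ and interchanging the order of integration --- justified since $f$ is bounded and, viewed as a function of $(t,a)$, the integrand vanishes for $a>\max(x,y)$ --- the left-hand side becomes $\int_y^x\big(\int_y^t f(a)\,da\big)\,dt$, where I use $\int_0^\infty\mathbf1_{\{c>a\}}f(a)\,da=\int_0^c f(a)\,da$ for $c\ge 0$ together with $t,y\ge 0$. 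Parametrizing $t=y+\beta(x-y)$, $\beta\in[0,1]$, and then substituting $a=y+\alpha\beta(x-y)$, $\alpha\in[0,1]$, in the inner integral turns this into $(x-y)^2\int_0^1\!\int_0^1\beta\,f\big(y+\alpha\beta(x-y)\big)\,d\alpha\,d\beta$; since $\alpha\beta$ is symmetric, renaming $\alpha\leftrightarrow\beta$ gives exactly $(x-y)^2\sigma(f;x,y)$. No continuity of $f$ is used, so the identity is valid for bounded Borel $f$; alternatively one may approximate $f_n$ by continuous functions and pass to the limit by bounded convergence.

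Applying the identity with $f=f_n=\mathbf1_{[n,2n]}$ gives $(x-y)^2\sigma(f_n;x,y)=\int_n^{2n}\big[(x-a)^+-(y-a)^+-\mathbf1_{\{y>a\}}(x-y)\big]\,da$. Using $(x-a)^+-(y-a)^+=\int_y^x\mathbf1_{\{t>a\}}\,dt$ and Fubini,
\[
\int_n^{2n}\big[(x-a)^+-(y-a)^+\big]\,da=\int_y^x\big|(n,2n)\cap(-\infty,t)\big|\,dt=\int_y^x\big(S_n(t)-n\big)\,dt,
\]
the last step because $\big|(n,2n)\cap(-\infty,t)\big|=S_n(t)-n$ for $t\ge 0$, while $\int_n^{2n}\mathbf1_{\{y>a\}}(x-y)\,da=(x-y)\big(S_n(y)-n\big)=\int_y^x\big(S_n(y)-n\big)\,dt$; subtracting, $(x-y)^2\sigma(f_n;x,y)=\int_y^x\big(S_n(t)-S_n(y)\big)\,dt$. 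To evaluate this, I would show that $\Psi(t):=\int_0^tS_n(s)\,ds-tS_n(t)+\tfrac12S_n(t)^2$ is constant on $[0,\infty)$: it is continuous, and since $S_n'(t)=\mathbf1_{(n,2n)}(t)$ for a.e.\ $t$ while $S_n(t)=t$ on $(n,2n)$, one gets $\Psi'(t)=S_n'(t)\big(S_n(t)-t\big)=0$ a.e. Hence $\int_0^xS_n(s)\,ds-\int_0^yS_n(s)\,ds=xS_n(x)-\tfrac12S_n(x)^2-yS_n(y)+\tfrac12S_n(y)^2$, so
\[
\int_y^x\big(S_n(t)-S_n(y)\big)\,dt=\Big(\int_0^xS_n(s)\,ds-\int_0^yS_n(s)\,ds\Big)-(x-y)S_n(y)=xS_n(x)-xS_n(y)-\tfrac12\big(S_n(x)^2-S_n(y)^2\big),
\]
which factors as $\tfrac12\big(S_n(x)-S_n(y)\big)\big(2x-S_n(x)-S_n(y)\big)$, the asserted formula.

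None of the steps is deep; the hard part is really only the bookkeeping --- keeping track of the orientation when $x<y$, handled uniformly by writing everything in the form $\int_y^x$, and the piecewise structure of $S_n$, which is circumvented by noting that $\Psi$ is continuous with a.e.-vanishing derivative, so the cases $t\le n$, $n\le t\le 2n$, $t\ge 2n$ never need separate treatment.
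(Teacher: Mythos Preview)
Your proof is correct and follows essentially the same approach as the paper, which simply says the first part is ``a simple application of the fundamental theorem of calculus to the function $g(x):=\int_0^x(x-a)f(a)\,da$'' and the second is ``a matter of straightforward computation.'' Your argument is a more explicit unfolding of this: your identity $(x-a)^+-(y-a)^+=\int_y^x\mathbf1_{\{t>a\}}\,dt$ together with Fubini is precisely the second-order Taylor expansion of $g$ (with $g'(y)=\int_0^y f$ and $g''=f$), and your constant-$\Psi$ trick is a clean way to avoid the case splits in the ``straightforward computation'' the paper leaves to the reader.
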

\begin{proof}
The first part is just a simple application  of the  fundamental theorem of calculus
to the function $g(x):= \int_0^x(x-a)f(a)\,da$. The second part is a matter of straightforward computation.
\end{proof}

Let $\EE^{(c)}$ denote the strongly local part of the Beurling--Deny decomposition of $\EE$ (see \cite[Theorem 3.2.1]{FOT} or \cite[Theorem 4.3.3]{CF}). By
\cite[Exercise 4.3.12]{CF}, for any $w\in F(D)\cap \mathcal B_b(D)$ there exists a unique
nonnegative Radon measure $\mu_{\langle w\rangle}^{(c)}$ such that
\begin{equation}
\label{eq6.6}
\int_D \eta\,d\mu^{(c)}_{\langle w\rangle}=2\EE^{(c)}(w\eta,w)-\EE^{(c)}(w^2,\eta),\quad \eta\in C_b(D)\cap F(D).
\end{equation}

Suppose that $u$ solves  \eqref{eq1.5}. By the probabilistic definition of a solution of \eqref{eq1.5},
$u(X)$ is a special semimartingale under the measure $P^D_x$ for q.e. $x\in D$.
Let $\Gamma_c(u,u)$ be the Revuz measure of the positive continuous additive functional  $[u(X)]^c$  (the continuous part of the quadratic variation of $u(X)$) of $\BM^D$.
By \cite{KR:NoD}, $T_k(u)\in F(D)$. Consequently, by \cite[Lemma 3.2.3, Lemma 5.3.3]{FOT},
\begin{equation}
\label{eq6.1}
\mathbf{1}_{\{-k<u\le k\}}\Gamma_c(u,u)
=\mu^{(c)}_{\langle T_k(u)\rangle},\quad k\ge 1.
\end{equation}
Note that by \cite[Lemma 3.2.3]{FOT}, $\mathbf{1}_{\{-k<u\le k\}}\Gamma_c(u,u)$   is bounded, so $\Gamma_c(u,u)$
is $\sigma$-finite. Let $J$ and $\kappa$ be the jump measure and the killing measure, respectively, of the
Beurling--Deny decomposition of $\EE$.

For any nonnegative function $g\in \mathcal B(\BR\times\BR)$ we set
\begin{align*}
\Gamma^g_j(u,u)(dx)&= \textcolor{red}{4}\int_{D_\partial} |u(x)-u(y)|^2g(u(x),u(y))J(dx,dy)\\
&=\textcolor{red}{4}\int_{D} |u(x)-u(y)|^2g(u(x),u(y))J(dx,dy)+\textcolor{red}{4}|u(x)|^2g(u(x),0)\kappa_D(dx),
\end{align*}
where
\[
\kappa_D(dx)=\kappa(dx)+\mathbf 1_D\cdot J(dx,D^c).
\]

In the proof of the following result we shall frequently use the   identity
$\langle R^D\nu_1,\nu_2\rangle=\langle \nu_1,R^D\nu_2\rangle$, which  is a simple
consequence of symmetry of $\EE$.

\begin{theorem}
\label{th6.2}
Let $u$ be an    integral solution of \eqref{eq1.5}. Then for any $\eta\in C_c(D)$,
\begin{equation}
\label{eq6.2}
\begin{split}
\frac1{2n}\Big[\int_{\{n\le u\le 2n\}}\eta\,d\Gamma_c(u,u)&+ \int_D\int_D \eta(x)\theta_n(u(x),u(y))\,J(dx,dy)
\\&+\int_D \eta(x)\theta_n(u(x),0)\,\kappa_D(dx)\Big]\to \int_D \eta\,d\mu^+_c
\end{split}
\end{equation}
as $n\rightarrow\infty$, where
\[
\theta_n(u(x),u(y))= \textcolor{red}{2}\big(S_n(u(x))-S_n(u(y))\big)\big(2u(x)-S_n(u(x))-S_n(u(y))\big)
\]
with $S_n(z)= \max\{\min\{z,2n\},n\}$.
\end{theorem}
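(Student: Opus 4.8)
The plan is to reduce \eqref{eq6.2} to the reconstruction formula of Corollary \ref{cor5.7} by showing that, for a well-chosen reducing sequence $(\tau_k)$, the bracketed expression divided by $2n$ converges (after integrating against the excessive function $R^D\eta^+$, or more precisely testing against a suitable measure) to $\langle R^D\rho,\mu_c^+\rangle$-type quantities. First I would note that, by the probabilistic definition of a solution, $u(X)$ is a special semimartingale under $P^D_x$ for q.e. $x$ with $u(X_t)=u(x)-A^{\mu_d}_t+M^x_t$, and I would apply the It\^o--Meyer (Tanaka-type) formula to $(u-n)^+$ and to $(u-2n)^+$, or more efficiently to the function $z\mapsto \int_0^z(z-a)^+ f_n(a)\,da$ with $f_n=\mathbf 1_{[n,2n]}$, whose second derivative is $f_n$. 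The quadratic-variation terms produced by this It\^o formula split, via the Beurling--Deny decomposition of $\EE$, into a continuous part governed by $\Gamma_c(u,u)$ (restricted to $\{n\le u\le 2n\}$ by the support of $f_n$), a jump part governed by $J$, and a killing part governed by $\kappa_D$; Lemma \ref{lem6.1} is precisely the bookkeeping identity that identifies the jump and killing integrands as $\tfrac12\theta_n$. This should express the left-hand side of \eqref{eq6.2}, up to a martingale term, as $\tfrac1{2n}$ times the expected increment of the additive functional $A^{\nu_n}$ where $\nu_n$ is the measure with $-L\big(\text{(truncation of }u\text{ near }n)\big)=\nu_n$.

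Next I would make this precise using the apparatus of Section \ref{sec5}: set $v_n$ to be the relevant truncation of $u$ at levels $n,2n$ (so that $v_n=\int_0^u f_n$-type primitive), observe $v_n\in F(D)\cap\mathcal B_b(D)$ by \cite{KR:NoD} and \eqref{eq6.1}, \eqref{eq6.6}, and identify the Revuz measure of the bracketed continuous-additive-functional as the sum of the three measures appearing in \eqref{eq6.2} divided by $2$ — this is where Lemma \ref{lem6.1} and the identity \eqref{eq6.1} do the work. Then $v_n$ solves $-Lv_n=\beta_n$ in $D$ for a measure $\beta_n\in\MMr_b(D)$, and by \cite[Theorem 6.1, Proposition 6.2]{K:CVPDE} one controls $(\beta_n)_c$ by $\mu_c^+$; in fact the argument in Lemma \ref{lm5.6} shows $(\beta_n)_c$ stabilises. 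Integrating \eqref{eq6.2}'s left side against $\eta$ and taking expectations along the reducing sequence $\tau_k=\tau_{\{R^D|\mu|<k\}}$ from Lemma \ref{lm5.4}, the martingale term vanishes and one is left with $\tfrac1{2n}\,\mathbb E_x A^{\beta_n}_{\tau_D}=\tfrac1{2n}R^D\beta_n$, whose behaviour as $n\to\infty$ is exactly what Corollary \ref{cor5.7} (and the de-la-Vall\'ee-type splitting $(\beta_n)_d=\mathbf 1_{\{|u|>n\}}\cdot(\text{part of }\mu_d)$) controls, yielding the limit $\langle\eta,\mu_c^+\rangle$ after cancelling the factor $2n$ against the quadratic growth of the primitive of $f_n$.

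The main obstacle I anticipate is the passage from the pointwise/q.e.\ semimartingale identity to a genuine equality of measures valid for all $\eta\in C_c(D)$ simultaneously, and in particular justifying that the jump and killing integrals in \eqref{eq6.2} are finite and that the It\^o--Meyer formula can be applied to $u$ even though $u$ is only a quasi-continuous $L^1$ function (not in $F(D)$): one must localise by the reducing sequence, work with $T_k(u)\in F(D)$, pass to the limit in $k$ using the supermartingale property of $R^D|\mu|(X)$ and dominated convergence (as in Lemmas \ref{lm5.4}, \ref{lm5.6}), and only then let $n\to\infty$. A secondary technical point is interchanging $\lim_{n}$ with the integration against $\eta$: this requires a uniform bound, which comes from $\tfrac1{2n}\theta_n(u(x),u(y))\le C\,\mathbf 1_{\{u(x)>n \text{ or } u(y)>n\}}\big(|u(x)-u(y)|\wedge n + n\mathbf1_{\cdots}\big)$-type estimates together with the integrability of $|u(x)-u(y)|^2\wedge 1$ against $J$ on the support of $\eta$ (finiteness of $\Gamma_j$ on $F(D)$), so that dominated convergence applies; the concentrated-part contribution survives precisely because $\theta_n$ does not vanish on jumps landing in $\{n\le u\le 2n\}$, which is the phenomenon highlighted after \eqref{eq1.12}.
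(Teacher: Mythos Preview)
Your plan is essentially the same as the paper's: apply the Tanaka--Meyer formula to $(u-a)^+$, integrate in $a$ against $f_n=\mathbf 1_{[n,2n]}$, and use Lemma~\ref{lem6.1} together with the L\'evy system to identify the resulting jump/killing integrals as the $\theta_n$-terms, while the occupation-times formula converts $\int f_n(a)L^a\,da$ into the $\Gamma_c$-term. The one genuine difference is the limiting step: the paper takes the supremum over $V\in\mathcal O_q(D)$ in the boundary term $\int_n^{2n}\BE^D_\nu(u-a)^+(X_{\tau_V})\,da$ and then invokes Theorem~\ref{th5.8}, whereas you propose to pass $k\to\infty$ along the specific reducing sequence of Lemma~\ref{lm5.4} and invoke Corollary~\ref{cor5.7}. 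Both routes work; the paper's choice ties Theorem~\ref{th6.2} directly back to the $\DD^1$ formula, while yours is slightly more self-contained.

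That said, your description of the limiting step is muddled. Introducing $v_n$ with $-Lv_n=\beta_n$ and writing ``$\tfrac1{2n}\BE_xA^{\beta_n}_{\tau_D}=\tfrac1{2n}R^D\beta_n$'' conflates two different objects: the bracketed expression in \eqref{eq6.2} is the \emph{quadratic-variation} side of the Tanaka--Meyer identity, not the potential of some $\beta_n$. What the identity actually gives (after integrating in $a$ and in $x$ against $\nu$) is
\[
\tfrac12\bigl\langle R^D\nu,\ \mathbf 1_{\{n\le u\le 2n\}}\Gamma_c(u,u)+\Gamma_j^{\sigma_{f_n}}(u,u)\bigr\rangle
=\int_n^{2n}\!\BE^D_\nu(u-a)^+(X_{\tau_k})\,da
+\int_n^{2n}\!\BE^D_\nu\!\int_0^{\tau_k}\mathbf 1_{\{u>a\}}\,dA^{\mu_d}\,da
-\langle G_n(u),\nu\rangle,
\]
with $G_n(z)=\int_n^{2n}(z-a)^+\,da$. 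You let $k\to\infty$ (monotone convergence on the left; Corollary~\ref{cor5.7} on the first right-hand term, which becomes $n\,\langle R^D\nu,\mu_c^+\rangle$ independently of $a$; dominated convergence on the $A^{\mu_d}$-term), then divide by $n$ and let $n\to\infty$: the drift and $G_n$ terms vanish because they are supported in $\{u>n\}$. No $\beta_n$ is needed. Your ``secondary technical point'' about interchanging $\lim_n$ with $\int\eta$ is a non-issue: the convergence is established at the level of the integral against $\nu$ (one takes $\nu$ with bounded $R^D\nu$ and recovers arbitrary $\eta\in C_c(D)$ at the end), not pointwise.
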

\begin{proof}
First note that by \cite[Proposition 3.7]{K:NoD}, without loss of generality, we may assume that $u$ is nonnegative and  $\mu_c=\mu^+_c$. To shorten notation, for $f\in\mathcal B(E)$ we write $\sigma_f(\cdot,\cdot)= \sigma(f;u(\cdot),u(\cdot))$
(see Lemma \ref{lem6.1}). Let $(\tau_k)$ be a reducing sequence for $u$. By the Tanaka--Meyer formula (see \cite[Theorem IV.7.70]{P}), for any $a\ge 0$ we have
\begin{align}
\label{eq6.3}
(u-a)^+(x)&=\mathbb E^D_x(u-a)^+(X_{\tau_k})+\mathbb E^D_x\int_0^{\tau_k}\mathbf1_{\{u(X_{s-})>a\}}\,dA^{\mu_d}_s
-\frac12L^a_{\tau_k}\nonumber\\
&\quad - \sum_{0\le s\le \tau_k} \Big( (u(X_s)-a)^+-(u(X_{s-})-a)^+
-\mathbf1_{\{u(X_{s-})>a\}}\Delta u(X_s)\Big),
\end{align}
where $L^a$ is the local time of $u(X)$ at $a$. Suppose that  $f\in C_c(\mathbb R^+)$. Then, by Lemma \ref{lem6.1},
\[
\begin{split}
\int_0^\infty\Big(\sum_{0\le s\le \tau_k} \Big( (u(X_s)-a)^+&-(u(X_{s-})-a)^+-\mathbf1_{\{u(X_{s-})>a\}}\Delta u(X_s)\Big)f(a)\,da\\
&=\sum_{0\le s\le \tau_k} |\Delta u(X_s)|^2\sigma(f;u(X_s),u(X_{s-})).
\end{split}
\]
Let $(N,H)$ be a L\'evy system of the process $\BM^D$ (see, e.g., \cite{Cinlar}
or \cite[Section A.3.4]{CF}). Then (see \cite[(A.3.33]{CF})
\begin{align}
\label{eq6.4}
&\BE^D_x \sum_{0\le s\le \tau_k} |\Delta u(X_s)|^2\sigma(f;u(X_s),u(X_{s-}))\nonumber\\
&\qquad =\BE^D_x \int_0^{\tau_k}\int_{D_\partial}
(u(z)-u(X_s))^2 \sigma(f;u(z),u(X_s))\,N(X_s,dz)\,dH_s.
\end{align}
Furthermore, by \cite[Corollary 1 to Theorem IV.70]{P},
\begin{equation}
\label{eq6.5}
\int_{\mathbb R}f(a) L^a_{\tau_k}\,da=\int_0^{\tau_k}f(u(X_s))\,dA^{\Gamma_c(u,u)}_s.
\end{equation}
From (\ref{eq6.3})--(\ref{eq6.5}) we get
\begin{align*}
\int_0^{u(x)}f(a)\,da &=\int_{\BR}(u(x)-a)^+f(a)\,da\\
&=\int_{\BR}\BE^D_x(u-a)^+(X_{\tau_k})f(a)\,da\\
&\quad+\int_{\BR}\BE^D_x\int_0^{\tau_k}\mathbf1_{\{u(X_{s-})>a\}}f(a)\,dA^{\mu_d}_s\,da
-\frac12\BE_x^D\int_0^{\tau_k}f(\textcolor{red}{u}(X_s))\,dA^{\Gamma_c(u,u)}_s\\
&\quad- \BE^D_x \int_0^{\tau_k}\!\!\int_{D_\partial} (u(z)-u(X_s))^2 \sigma(f;u(z),u(X_s))\,N(X_s,dz)\,dH_s.
\end{align*}
Hence, for any $\nu\in \mathcal P$,
\begin{align}
\label{eq6.7}
&\frac12\BE_\nu^D\int_0^{\tau_D}f(u(X_s))\,dA^{\Gamma_c(u,u)}_s\nonumber \\
&\qquad+\BE^D_\nu \int_0^{\tau_D}\!\!\int_{D_\partial}
(u(z)-u(X_s))^2 \sigma(f;u(z),u(X_s))\,N(X_s,dz)\,dH_s \nonumber\\
&\qquad=-\int_0^{\langle u,\nu\rangle}f(a)\,da +I(f)
\end{align}
with
\[
I(f)=\sup_{V\in\mathcal O_q(D)}\Big[\int_{\BR} f(a)\Big(\mathbb E^D_\nu(u-a)^+(X_{\tau_V})+
\BE^D_\nu\int_0^{\tau_V}\mathbf1_{\{u(X_{s-})>a\}}\,dA^{\mu_d}_s\Big)\,da\Big].
\]
By \cite[Theorem 5.3.1]{FOT},
$2J(dx\,dy)=N(x,dy)\mu_H(dx)$, where $\mu_H$ is the Revuz measure of $H$. Therefore from
(\ref{eq6.7}) it follows that if  $\mbox{supp}[f]\subset [\langle u,\nu\rangle,\infty)$, then
\[
\frac12\left[\langle R^D\nu, f(u)\cdot\Gamma_c(u,u)\rangle
+ \langle R^D\nu, \Gamma^{\sigma_f}_j(u,u)\rangle\right]= I(f).
\]
In particular, taking  $f_n:=\fch_{[n,2n]}$ with a sufficiently  large $n$ we get
\begin{align*}
&\frac12\left[\langle R^D\nu, \mathbf1_{\{n\le u\le 2n\}}\cdot\Gamma_c(u,u)\rangle
+ \langle R^D\nu, \Gamma^{\sigma_{f_n}}_j(u,u)\rangle\right]\\
&\qquad=\sup_{V\in\mathcal O_q(D)}\Big[\int_n^{2n} \Big(\mathbb E^D_\nu(u-a)^+(X_{\tau_V})+
\mathbb E^D_\nu\int_0^{\tau_V}\mathbf1_{\{u(X_{s-})>a\}}\,dA^{\mu_d}_s\Big)\,da\Big].
\end{align*}
Observe that
\[
\frac1n\sup_{V\in\mathcal O_q(D)}\Big[\int_n^{2n}
\Big(\BE^D_\nu\int_0^{\tau_V}\mathbf1_{\{u(X_{s-})>a\}}\,dA^{\mu_d}_s\Big)\,da\Big]\le
\BE^D_\nu\int_0^{\tau_D}\mathbf1_{\{u(X_{s-})>n\}}\,dA^{\mu_d}_s\to 0
\]
and
\begin{align*}
\sup_{V\in\mathcal O_q(D)}\mathbb E^D_\nu(u-2n)^+(X_{\tau_V})
&\le\frac{1}{n}\sup_{V\in\mathcal O_q(D)}\Big[\int_n^{2n} \mathbb E^D_\nu(u-a)^+(X_{\tau_V})\,da\Big]\\
&\le \sup_{V\in\mathcal O_q(D)}\mathbb E^D_\nu(u-n)^+(X_{\tau_V}).
\end{align*}
As a result, by Theorem \ref{th5.8},
\[
\frac1{2n}\langle R^D\nu, \fch_{\{n\le u\le 2n\}}\cdot\Gamma_c(u,u)
+\Gamma^{\sigma_{f_n}}_j(u,u)\rangle\to \int_D R^D\nu\,d\mu_c
\]
as $n\rightarrow\infty$. From this and Lemma \ref{lem6.1} we  easily get (\ref{eq6.2}).
\end{proof}

\begin{remark}
Observe that $\theta_n$ of Theorem \ref{th6.2} equals $2|u(x)-u(y)|^2$
when $u(x),u(y)\in [n,2n]$
and equals zero when $u(x), u(y)\le n$ or $u(x),u(y)\ge 2n$.
\end{remark}

\section{Examples and additional remarks}
\label{sec7}

In the following examples $D$ is a nonempty open bounded set in $E:=\BR^d$, $d\ge3$, and $m$ is the Lebesgue measure. Boundedness of $D$ implies in particular that in all the examples given below condition (\ref{eq5.4}) is satisfied.

\begin{example}
\label{ex4.1}
(Laplace operator). Consider the form
\[
\EE(u,v)=\frac12\int_{\BR^d}\nabla u\cdot\nabla v\,dx,\quad u,v\in \mathfrak D(\EE):=H^1(\BR^d).
\]
It is known (see \cite[Examples 1.2.3, 1.5.1]{FOT}) that $(\EE,H^1(\BR^d))$ is a transient regular Dirichlet  form on $L^2(\BR^d;m)$. The operator associated with $\EE$ is $(1/2)\Delta$. The process $\BM$ associated with it in the resolvent sense is a standard $d$-dimensional Brownian motion (see \cite[Example 4.2.1]{FOT}). It is known (see \cite[Exercise 2.3.1]{FOT} that $\mathfrak D(\EE^D)=H^1_0(D)$, so the part $\BM^D$ of $\BM$ on $D$ is nothing but the process associated with the form $(\EE^D,H^1_0(D))$. Since $P^D_t(x,\cdot)\le P_t(x,\cdot)$,
the process $\BM^D$ satisfies (\ref{eq2.6}).
\end{example}

We say that $u\in C^2(\bar D)$ if there exists $U\in C^2(\BR^d)$ such that $U=u$ in $\bar D$. Set $C^2_0(\bar D)=\{u\in C^2(\bar D):u=0$ on $\partial D\}$.

\begin{remark}
Following  \cite[Definition (5.1)]{LSW} (see also \cite[D\'efinition 9.1]{S} and \cite{MV}) we say that $u\in L^1(D;m)$ is  a {\em weak solution} of (\ref{eq1.1}) if
\[
-\frac12\int_Du\Delta v\,dx=\int_Dv\,d\mu,\quad
v\in C^2_0(\bar D).
\]
Assume additionally that $D$ is regular, say of class $C^2$. If $\mu\in\MMr_b(D)$ then, by \cite[Theorem 1.2.2]{MV}, problem (\ref{eq1.1}) has a unique weak solution $u$ given by (\ref{eq1.2}), i.e. the unique weak solution coincides with the integral solution. For further remarks see Remark \ref{rem7.6}.
\end{remark}

\begin{example}
(Divergence form operator). Let $a_{ij}:\BR^d\rightarrow\BR$ be measurable functions such that $a_{ij}(x)=a_{ji}(x)$ for $x\in\BR^d$ and $i,j=1,\dots,d$, and for some
strictly positive function $\lambda:\mathbb R^d\to\mathbb R$ and constant $\Lambda>0$ we have
\[
\lambda(x)|\xi|^2\le\sum^{d}_{i,j=1}a_{ij}(x)\xi_i\xi_j\le\Lambda|\xi|^2,\quad x,\xi=(\xi_1,\dots,\xi_d)\in\BR^d.
\]
Then the form
\[
\EE(u,v)=\sum^d_{i,j=1}\int_{\BR^d}a_{ij}(x)\partial_{x_i}u(x)\partial_{x_j}v(x)\,dx,
\quad u,v\in \mathfrak D(\EE):=H^1(\BR^d),
\]
is a regular  Dirichlet form on $L^2(E;m)$ satisfying  the absolute continuity condition provided that $\lambda^{-1}\in L^1_{loc}(\mathbb R^d)$
(see \cite{CMM,RW,VV}). The operator associated with $\EE$ has the form
\[
Lu=\sum^{d}_{i,j=1}\partial_{x_i}(a_{ij}(x)\partial_{x_j}u), \quad u\in \mathfrak D(L).
\]
Clearly, we have  $\EE^{(c)}=\EE$. From (\ref{eq6.6}) (and direct computation of its right-hand side) it follows that
\[
\mu^{(c)}_{\langle u\rangle}= \Gamma_c(u,u)  =2\sum^d_{i,j=1}a_{ij}(x)\partial_{x_i}u(x)\partial_{x_j}u(x)\,dx
=2(a\nabla u\cdot\nabla u)(x)\,dx.
\]
Alternatively, one can use the known formula for the additive functional $[u(X)]$; see \cite[Example 5.2.1]{FOT}. Therefore the reconstruction formula (\ref{eq6.2}) reads:
\[
\frac1n\int_{\{n\le u\le 2n\}} (a\nabla u\cdot\nabla u)(x)\eta(x)\,dx\rightarrow\int_D\eta\,d\mu^+_c.
\]
\end{example}

\begin{example}
(Fractional Laplace operator). Let $\alpha\in(0,2)$ and
\[
\EE(u,v)=\int_{\BR^d}\hat u(x)\bar{\hat v}(x)|x|^{\alpha}\,dx,\quad u,v\in \mathfrak D(\EE):=H^{\alpha/2}(\BR^d),
\]
where $\hat u$ denotes the Fourier transform of $u$ and
\[
H^{\alpha/2}(\BR^d)=\{u\in L^2(\BR^d):\int_{\BR^d}|\hat u(x)|^2|x|^{\alpha}\,dx<\infty\}.
\]
By \cite[Examples 1.4.1, 1.5.2]{FOT}, $(\EE,\mathfrak D(\EE))$ is a transient Dirichlet form on $L^2(\BR^d;m)$. Its generator is that fractional Laplace operator $-(-\Delta)^{\alpha/2}$. The Hunt process associated with $\EE$ is called a symmetric $\alpha$-stable L\'evy process
(see \cite[Example 4.1.1]{FOT}), and $\BM^D$ is the $\alpha$-symmetric stable L\'evy process killed upon leaving $D$. Condition (\ref{eq2.6}) is satisfied, because it is satisfied by the transition kernel of $\BM$.
In the Beurling--Deny decomposition of $\EE$ we have $\EE^{(c)}=0$, $\kappa=0$ and
\[
J(dx\,dy)=c(\alpha,d)|x-y|^{-d-\alpha}\,dx\,dy.
\]
Therefore  (\ref{eq6.2}) reads:
\[
\begin{split}
\frac{c(\alpha,d)}{n}&\int_D\int_D\eta(x)\big(S_n(u)(x)-S_n(u)(y)\big)
\big(2u(x)-S_n(u)(x)-S_n(u)(y)\big)|x-y|^{-d-\alpha}\,dx\,dy
\\&
+\frac{1}{n}\int_D\eta(x)\big(S_n(u)(x)-n\big)\big(2u(x)-S_n(u)(x)-n\big)\,\kappa_D(dx)
 \rightarrow \int_D\eta\,d\mu^+_c
\end{split}
\]
with
\[
\kappa_D(dx)=c(\alpha,d)\Big[ \int_{D^c} |x-y|^{-d-\alpha}\,dy\Big]\,dx.
\]
\end{example}

\begin{remark}
By \cite[Exercise 4.2.1]{FOT}, if $(P_t)_{t>0}$ is strongly Feller, that is $P_t(\BB_b(E))\subset C_b(E))$ for $t>0$, then $\BM$ satisfies (\ref{eq2.6}).
\end{remark}

\begin{remark}
\label{rem7.6}
Let $\mu\in\MMr_b(D)$.
By \cite[Proposition 4.12]{K:CVPDE}, the
integral solution of (\ref{eq1.5}) coincides with the unique duality solution of
(\ref{eq1.5}), and by
\cite[Theorem 4.4]{KR:MM} (see also \cite[Corollary 4.10]{K:NoD}) the integral solution of (\ref{eq1.5}) coincides with the unique renormalized solution of (\ref{eq1.5}).
Therefore in Theorem \ref{th5.8}  ``integral solution" can be replaced by ``duality solution" or by ``renormalized solution".
\end{remark}

\section*{Acknowledgments}
T. Klimsiak acknowledges the support of the NCN grant   2022/45/B/ST1/01095.

\end{document}